\newtheorem{theorem}{Theorem}[section]
\newtheorem{lemma}[theorem]{Lemma}
\newtheorem{prop}[theorem]{Proposition}
\newtheorem{cor}[theorem]{Corollary}
\theoremstyle{definition}
\newtheorem{definition}[theorem]{Definition}
\theoremstyle{remark}
\newtheorem{remark}[theorem]{Remark}
\numberwithin{equation}{section}
\newcommand{\CNBW}{\textnormal{CNBW}}
\renewcommand{\epsilon}{\varepsilon}
\begin{document}

\title{Global eigenvalue fluctuations  of random biregular bipartite graphs}

\author{Ioana Dumitriu}

\address{Department of Mathematics, University of California, San Diego, La Jolla, CA 92093}

\email{idumitriu@ucsd.edu}

\author{Yizhe Zhu}
\address{Department of Mathematics, University of California Irvine, Irvine, CA 92697}
\email{yizhe.zhu@uci.edu}
\thanks{This work was partially supported by NSF DMS-1949617}

\subjclass[2000]{Primary 60C05, 60B20; Secondary 05C50}
\keywords{random biregular bipartite graph, random regular hypergraph, switching, non-backtracking walk, eigenvalue fluctuations}
\date{\today}

\maketitle

\begin{abstract}
We compute the eigenvalue fluctuations of uniformly distributed random biregular bipartite graphs with fixed and growing degrees for a large class of analytic functions. As a key step in the proof, we  obtain a total variation distance bound for the Poisson approximation of the number of cycles and cyclically non-backtracking walks in random biregular bipartite graphs, which might be of independent interest. We also prove a semicircle law for random $(d_1,d_2)$-biregular bipartite graphs when $\frac{d_1}{d_2}\to\infty$.
As an application, we translate the results to adjacency matrices of uniformly distributed  random regular hypergraphs.
\end{abstract}

\section{Introduction}
\subsection{Eigenvalue fluctuations of random matrices}
The study of fluctuations from the limiting empirical spectral distributions (ESDs) for random matrices is a well-established topic of interest in random matrix theory, originated in \cite{jonsson1982some,khorunzhy1996asymptotic, sinai1998central}, see also \cite{anderson2010introduction} and all references therein. More recently, it has been extended to sparse random matrices and random graph-related matrices in various regimes of sparsity and independence (\cite{shcherbina2010central,shcherbina2012central,benaych2014central,dumitriu2013functional,ben2015fluctuations}), and the natural next target is hypergraphs (\cite{feng1996spectra}). 

The ultimate goal in these studies is to see the equivalent of the one-dimensional Central Limit Theorem (CLT) emerge, when examining linear statistics of the spectra of random matrices and random graphs. More precisely, denote by $\lambda_1, \ldots, \lambda_n$ the eigenvalues of the random matrix, suitably scaled to put them with high probability on a compact set, and let $f$ be a suitably smooth function.  When the matrices in question are not extremely sparse, one can almost invariably prove that the linear statistic
\[
\mathcal{L}(f) = \sum_{i=1}^n f(\lambda_i) 
\]
has the property that, when centered, it converges to a normal distribution whose variance depends on $f$:
\[
\mathcal{L}(f) - \mathbb{E} (\mathcal{L}(f)) ~\rightarrow ~ N(0, \sigma_f^2)~.
\]

\subsubsection{Dense and not-too-sparse Wigner cases.}
There is an interesting phenomenon taking place with respect to sparsity; the variance $\sigma_f^2$ is the same in the case of Gaussian Orthogonal Ensembles (GOEs) as in the case of the random regular graph under the  permutation model with growing degrees \cite{dumitriu2013functional}:
\begin{eqnarray} \label{varf}
\sigma_f^2 & = & 2 \sum_{k=1}^{\infty} ka_k^2~,
\end{eqnarray}
where $a_k$ is the $k$-th coefficient of $f$ in the Chebyshev polynomial basis expansion. Small variations of this expression also occur in dense Wigner variants and the uniform regular graph model, as follows. For real  Wigner and generalized Wigner matrices in the dense case \cite{johansson1998fluctuations,bai2005convergence,anderson2006clt,chatterjee2009fluctuations,sosoe2013regularity}, $\sigma_f$ also depends on the fourth moments of the off-diagonal entries and the variance of the diagonal entries, which yields corrections to the constants in front of $a_1^2, a_2^2$ (see Theorem 1.1 in \cite{bai2005convergence} for an explicit expression). Similarly, in the uniform regular graph model \cite{johnson2015exchangeable}, a correction must be introduced as there are no one- or two-cycles (as the graph is simple), and so the terms corresponding to $k=1$ and $k=2$ in the sum \eqref{varf} are not present.  

However, in the case of sparse Wigner matrices (corresponding to Erd\H{o}s-R\'enyi graphs $G(n,p)$ with $p\to 0, np\to\infty$, \cite{shcherbina2012central}), the fluctuations are impacted by the fact that the number of nonzero entries in each row (i.e., the degree of each vertex) fluctuates, and the $4$th moment of the scaled adjacency matrix entries grows. The variance $\sigma_f^2$ blows up, necessitating another multiplicative scaling of the linear statistic $\mathcal{L}(f) - \mathbb{E}(\mathcal{L}(f))$ by $\sqrt{p}$, and extracting only part of the expression \eqref{varf} (see Theorem 1 in \cite{shcherbina2012central}). 

\subsubsection{Dense Wishart cases.}
A similar phenomenon occurs in the Wishart case, i.e., for sample covariance matrices (corresponding to bipartite graphs); in the case of dense matrices with converging aspect ratio, the variance is given in different forms in \cite{bai2004clt, bai2010functional}. Although these expressions are not explicit in terms of a Chebyshev polynomial expansion, in \cite{cabanal2001fluctuations,kusalik2007orthogonal}, it is shown that the covariance between two linear statistics is diagonalized by shifted Chebyshev polynomials.  When the aspect ratio goes to $\infty$, \cite{chen2015clt} computes the variance which is consistent with the Wigner case in \cite{bai2005convergence}. So far, we are not aware of any CLT results for sparse bipartite Erd\H{o}s-R\'{e}nyi graphs, but a similar argument as in \cite{shcherbina2012central} should apply.

For dependent entries (biregular bipartite graphs), we obtain here the variance of the eigenvalue fluctuation in Theorem \ref{thm:GaussianCLT}, and it matches the one in \cite{chen2015clt}, except for the first coefficient.

\subsubsection{Constant (expected or deterministic) degree.} 
When $p=\frac{c}{n}$, the explicit limiting spectral distribution for Erd\H{o}s-R\'{e}nyi graphs $G(n,p)$ is not known, although it is known that the measure $\mu_c$ exists for every $c$ (given, e.g., by a Stieltjes transform equation as in \cite{bordenave2010resolvent}), and if $c>1$ it consists of a continuous part and an atomic part \cite{bordenave2017mean}. 
Convergence of $\mu_c$ to the semicircular distribution is studied in  \cite{enriquez2016spectra,jung2018delocalization}, where asymptotic expressions for the moments of $\mu_c$ with an $o(1/c)$ term are computed (as $c \rightarrow \infty$, $\mu_c$ converges to the semicircle law).

However, a CLT for Erd\H{o}s-R\'{e}nyi graphs $G(n,\frac{c}{n})$ still holds \cite{shcherbina2010central,benaych2014central} with a more complicated variance that does not follow the same expression as in \eqref{varf}, see Theorem 2.2 in \cite{benaych2014central}. 
By contrast, in the random $d$-regular graph case with $d$ finite, the fluctuations are no longer Gaussian. Instead, they are modeled by an infinitely divisible distribution, expressed as a sum of Poisson variables (see \cite{dumitriu2013functional} for the permutation model, and \cite{johnson2015exchangeable,metz2014finite} for the uniform model). Notably, in the case when the matrix is not symmetric and corresponds to the (directed) cycle structure of a random permutation, \cite{ben2015fluctuations} showed that the global fluctuations could be computed, and whether or not the limiting distribution is Gaussian depends on how smooth the test function is.  For random regular graphs with fixed degree $d$, we will not see the effect of degree fluctuation. But for Erd\H{o}s-R\'{e}nyi graphs $G(n,\frac{d}{n})$ with expected degree $d$, the degree fluctuation contributes to the fluctuation of linear statistics, and there is an extra $n^{-1/2}$ normalization in the CLT (see \cite[Theorem 4]{shcherbina2010central}). Such a difference shows that eigenvalues of random $d$-regular graphs for fixed $d$ are more rigid than the corresponding Erd\H{o}s-R\'{e}nyi graph $G(n,\frac{d}{n})$. 

For the bipartite Erd\H{o}s-R\'enyi case, once again, the limiting distribution is not known, but results that are similar to \cite{enriquez2016spectra} can be found in \cite{noiry2018spectral}.  We are not aware of any CLT-like results for the fluctuations in this case.
We compute the fluctuations for the uniformly random biregular bipartite with fixed degrees. Just like in the regular case \cite{dumitriu2013functional}, we see that the fluctuations are modeled by a sum of Poisson variables (Theorem \ref{thm:CLTfixed}). 


Another important  class of random graphs  is the configuration model. The ESD of the configuration model with a large mean degree is not generally given by the  semicircle law \cite{dembo2021empirical,metz2020spectral}, but no linear statistics result is known. When the degrees grow with the size of the graph, similar to Erd\H{o}s-R\'enyi graphs and random regular graphs, we expect a central limit theorem for linear statistics holds. There are linear statistics results for inhomogeneous matrix models beyond Wigner matrices \cite{chatterjee2009fluctuations,adhikari2021linear}, and it might be possible to apply their techniques, together with a coupling argument introduced in \cite{dembo2021empirical}, to study the linear statistics for the configuration model.

\subsection{Random biregular bipartite graphs}\label{sec:RBBG}
Biregular bipartite graphs have found applications in  error correcting codes, matrix completion, and community detection, see for example \cite{hoory2006expander,tanner1981recursive,sipser1996expander, gamarnik2017matrix,brito2018spectral,burnwal2020deterministic,blake2017short,dehghan2018tanner,dehghan2019computing}.
An \textit{$(n,m,d_1,d_2)$-biregular bipartite  graph} is a bipartite graph $G=(V_1,V_2, E)$ where $|V_1|=n, |V_2|=m$ and every vertex in $V_1$ has degree $d_1$ and every vertex in $V_2$ has degree $d_2$. Here we must have $nd_1=md_2= |E|$. When the number of vertices is clear, we call it a \textit{$(d_1,d_2)$-biregular bipartite graph} for simplicity.

Let $X\in \{0,1\}^{n\times m}$ be a matrix indexed by $V_1\times V_2$ such that 
$X_{ij}=1$ if and only if $ (i,j)\in E$. The adjacency matrix of a $(d_1,d_2)$-biregular bipartite graph with $V_1=[n], V_2=[m]$ can be written as 
\begin{align}\label{eq:A}
    A=\begin{bmatrix}
    0  &X\\
    X^{\top} &0 
    \end{bmatrix}.
\end{align}
All eigenvalues of $A$ come in pairs as $\{-\lambda, \lambda\}$, where $|\lambda|$ is a singular value of $X$, along with extra $|n-m|$ zero eigenvalues. It's easy to see  $\lambda_1(A)=-\lambda_{n+m}(A)=\sqrt{d_1d_2}$. 

The empirical spectral distribution for uniformly distributed random biregular bipartite graphs (RBBGs), which is the equivalent of the Kesten-McKay law, was first computed in \cite{godsil1988walk,mizuno2003semicircle} for the case of fixed $d_1, d_2$. For growing degrees, when $\frac{d_1}{d_2}$ converges to a positive constant, the analogue to the Mar\v{c}enko-Pastur law was proved in \cite{dumitriu2016marvcenko, tran2020local,yang2017local}.

In this paper, instead of examining the spectrum of $A$, we will be looking at the spectrum of the matrix $XX^{\top} - d_1I$. This serves two purposes: one, it allows for an immediate parallel to the sample covariance matrix (Wishart) case, and two, it allows us to deal with all regimes in a unitary fashion. The eigenvalues of $XX^{\top}-d_1I$ are the shifted squares of the eigenvalues of $A$. Any result on global fluctuations for linear statistics of the spectrum of $XX^{\top}-d_1I$ is automatically converted into an equivalent result for the spectrum of $A$. However, because any result of fluctuations must necessarily put most of the eigenvalues (with the exception of the deterministic outliers) on a compact interval, scaling must be involved. This works perfectly fine when the ratio $d_1/d_2$ is bounded, but it becomes tricky when it is not, and the matrix $XX^{\top}-d_1I$ allows us to do the scaling in a more natural way, similarly to the sample covariance (Wishart) matrix with unbounded aspect ratio in \cite{chen2015clt}.

To prove a result on eigenvalue fluctuations, we need two special ingredients: \emph{eigenvalue confinement on a compact interval} and \emph{asymptotic behavior of cycle counts}. For the former, we make use of the spectral gap shown in \cite{brito2018spectral} for the fixed degree case and \cite{zhu2020second} for the growing degree case. Previous results  of this kind were obtained for random regular graphs \cite{friedman2008proof,bordenave2015new} for a fixed degree, and \cite{broder1998optimal,cook2018size,tikhomirov2019spectral,bauerschmidt2020edge} for growing degrees.

For the latter, we use Stein's method to approximate cycle counts as Poisson random variables by bounding the total variation distance (Theorem \ref{thm:Poissoncount}) and obtain a Poisson approximation of the number of cyclically non-backtracking walks (Corollary \ref{cor:dTVCNBW}). 
Note that computing cycle counts is a fundamental problem in the study of random graphs, ever since the seminal papers of \cite{mckay1981expected} and more general \cite{mckay1981subgraphs,mckay2004short}. 

To prove our results, we follow the recipe of \cite{johnson2015exchangeable}  by using switching to construct exchangeable pairs of graphs that allow us to estimate cycle counts. The switching we use here differs from \cite{johnson2015exchangeable} and is suitable for  biregular bipartite graphs. In the analysis of switchings, a new challenge is an imbalance between the parameters $d_1,d_2$ when the aspect ratio is unbounded. Our  results on cycle counts hold for a large range of $d_1,d_2$, and are notably independent of the aspect ratio as long as the cycle length is small. It is also worth noting that the method of switching has been applied to other problems on random  biregular  bipartite graphs, for example,  \cite{canfield2005asymptotic,canfield2008asymptotic,perarnau2013matchings}.

Finally, we also obtain an algebraic relation between linear eigenvalue statistics on modified Chebyshev polynomials and cyclically non-backtracking walks (Theorem \ref{thm:chebyshevCNBW}). Then based on the spectral gap results in \cite{brito2018spectral,zhu2020second} and approximation theory for Chebyshev polynomials \cite{trefethen2013approximation}, we extend the eigenvalue fluctuation results to a general class of analytic functions.

\subsection{Main results} Our main contributions are represented by Theorems \ref{thm:CLTfixed}, \ref{thm:GaussianCLT}, establishing the behavior of the global fluctuations for the linear statistics of eigenvalues of RBBGs in the fixed $d_1, d_2$, respectively, in the $d_1\cdot d_2 \rightarrow \infty$ cases.  Note that Theorem \ref{thm:GaussianCLT} describes the behavior of the fluctuations even in the case when the limiting ESD does not exist since it merely requires $d_1/d_2$ to be bounded, rather than to converge to a number in $[1, \infty)$ (which would be the necessary condition for the ESD to converge). In 
addition, we  show that the covariance between two linear statistics with different test functions is given by the coefficients in their Chebyshev expansions.

As part of the proofs for our main results, we also describe the asymptotic behavior of the cycle counts (Theorem \ref{thm:Poissoncount}). Based on the cycle counts estimates, we then use the locally tree-like structure of RBBGs to prove a global semicircle law in the case when the degree goes slowly \textcolor{blue}{$(d_1=n^{o(1)})$} and  $d_1/d_2$ is unbounded (Theorem \ref{thm:globallaw}).

Finally, as an important application, we obtain  equivalent results for uniformly distributed random regular  hypergraphs, including cycle counts, global laws, spectral gaps, and eigenvalue fluctuations.

\subsection{Organization of the paper}
In Section \ref{sec:cycle} we prove our results on cycle counts in random biregular bipartite graphs. Section \ref{sec:spectralgap} collects relevant results for the spectral gap and eigenvalue confinement on a compact interval from the literature. Section \ref{sec:CLT} proves our main results, Theorems \ref{thm:CLTfixed} and \ref{thm:GaussianCLT}. Section \ref{sec:globallaw} proves a global semicircle law for RBBGs when $d_1/d_2$ is unbounded. In  Section \ref{sec:hypergraph}, we use the connections established in \cite{dumitriu2019spectra} to prove several results on uniformly distributed regular hypergraphs.

\section{Cycle counts}\label{sec:cycle}
\subsection{Counting switchings}
In this section, we estimate the number of switchings that create or delete a cycle in a biregular bipartite graph. The precise definitions of switchings for our purposes are given in Definition \ref{def:fswitch} and Definition \ref{def:bswitch}.  These estimates will be used in Section \ref{sec:Poi} to show that cycle counts converge in distribution to Poisson random variables.

\begin{definition}[cycle]\label{def:simplecycle}
Throughout the paper, when we say a  \textit{cycle}, we mean a \textit{simple cycle}, i.e., all vertices in a cycle are distinct.
\end{definition}

Let $K_{n,m}$ be the complete bipartite graph on $n+m$ vertices with $V_1=[n], V_2=[m]$.
Let $H\subseteq K_{n,m}$ be a subgraph with $v$ vertices. For any $i\in K_{n,m}$, let $g_i,h_i$ denote the degree of $i$ considered as a vertex in a biregular bipartite graph $G=(V_1,V_2, E)$ and the subgraph $H$, respectively. Let $h_{\max}$ be the largest value of $h_i$ and  $|H|$ be the number of edges of $H$. Denote by 
$$[x]_a=x(x-1)\cdots(x-a+1)$$ the falling factorial.  The following estimate is given in   \cite{mckay1981subgraphs}.
\begin{prop}[Theorem 3.5 in \cite{mckay1981subgraphs}] \label{eq:propH}
	Assume $d_1\geq d_2$ and $nd_1\geq 2d_1(d_1+h_{\max}-2)+|H|+1$. Then
	\[ \mathbb P(H\subseteq G)\leq \frac{\prod_{i=1}^v [g_i]_{h_i}}{[nd_1-4d_1^2-1]_{|H|}}.\]
\end{prop}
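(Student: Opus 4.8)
\emph{Proof plan.}  I would read $\mathbb P(H\subseteq G)$ as a ratio of graph counts and control it by a switching argument that builds $H$ up one edge at a time.  Write $\mathbb P(H\subseteq G)=N_H/N$, where $N$ is the number of $(d_1,d_2)$-biregular bipartite graphs on $V_1\sqcup V_2$ and $N_H$ the number of those whose edge set contains $E(H)$.  We may assume $h_i\le g_i$ for every vertex $i$ (the bound is otherwise trivial), so $h_{\max}\le d_1$.  Fix any ordering $e_1,\dots,e_{|H|}$ of $E(H)$, let $H_j$ have edge set $\{e_1,\dots,e_j\}$ and $H_0$ be empty, so that
\[
\mathbb P(H\subseteq G)=\prod_{j=1}^{|H|}\frac{N_{H_j}}{N_{H_{j-1}}}\,,
\]
and it remains to bound each factor and multiply.

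To estimate $N_{H_j}/N_{H_{j-1}}$, write $e_j=(x,y)$ with $x\in V_1$, $y\in V_2$, and let $a_j,b_j$ be the degrees of $x,y$ in $H_{j-1}$.  The switching I would use deletes $e_j$: given $G$ with $E(H_j)\subseteq E(G)$, a \emph{forward switching} picks an edge $(x',y')\in E(G)$ with $x'\ne x$, $y'\ne y$, $(x,y')\notin E(G)$, $(x',y)\notin E(G)$ and $(x',y')\notin E(H_{j-1})$, and replaces $\{(x,y),(x',y')\}$ by $\{(x,y'),(x',y)\}$.  The resulting $G'$ is again $(d_1,d_2)$-biregular, contains $E(H_{j-1})$, and omits $e_j$; conversely, from any $G'$ in this target class the preimages under the forward map are exactly the \emph{backward switchings}, i.e.\ the pairs $(y',x')$ with $y'\in N_{G'}(x)\setminus N_{H_{j-1}}(x)$, $x'\in N_{G'}(y)\setminus N_{H_{j-1}}(y)$ and $(x',y')\notin E(G')$.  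Double counting the forward switchings then yields
\[
N_{H_j}\cdot\min_{G\supseteq H_j}\#\{\text{forward switchings}\}\ \le\ N_{H_{j-1}}\cdot\max_{G'}\#\{\text{backward switchings}\}\,.
\]
The backward count from any $G'$ is at most $(d_1-a_j)(d_2-b_j)$ (at most $d_1-a_j$ choices of $y'$, at most $d_2-b_j$ of $x'$); and the forward count from any $G\supseteq H_j$ equals $nd_1$ minus the number of edges of $G$ forbidden by the five conditions, which a careful enumeration (using $d_2\le d_1$ and $h_{\max}\le d_1$) bounds above so that the forward count is at least $nd_1-4d_1^2-(j-1)$ — positive for all $j\le|H|$ by the hypothesis $nd_1\ge 2d_1(d_1+h_{\max}-2)+|H|+1$.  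Hence
\[
\frac{N_{H_j}}{N_{H_{j-1}}}\ \le\ \frac{(d_1-a_j)(d_2-b_j)}{nd_1-4d_1^2-(j-1)}\,.
\]

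Finally I would multiply over $j=1,\dots,|H|$.  The denominators give $\prod_{j=1}^{|H|}\bigl(nd_1-4d_1^2-(j-1)\bigr)=[nd_1-4d_1^2]_{|H|}\ge[nd_1-4d_1^2-1]_{|H|}$.  For the numerators, fix a vertex $i$ of $H$: as $e_j$ runs over the edges of $H$ at $i$ in the chosen order, the factor equal to $g_i$ minus the current $H$-degree at $i$ runs through $g_i,g_i-1,\dots,g_i-h_i+1$, with product $[g_i]_{h_i}$; multiplying over $i$ turns $\prod_j(d_1-a_j)(d_2-b_j)$ into $\prod_{i=1}^v[g_i]_{h_i}$.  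This gives the asserted bound.

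The step I expect to be the main obstacle is the forward-switching estimate.  One has to check that the forward map is well defined, that its image is precisely the stated target class, and that the fibre over each $G'$ in that class is counted exactly by the backward switchings (so that the double-counting identity is valid), and then carefully enumerate the forbidden edges to pin down the denominator $nd_1-4d_1^2-(j-1)$ with the stated constant — this is exactly where the degree hypothesis is needed, to keep all the relevant counts, equivalently the falling factorial in the denominator, strictly positive.  As a sanity check on the shape of the answer, in the configuration (pairing) model a one-line union bound over the $\prod_i[g_i]_{h_i}$ ways of choosing, at each vertex, which half-edges realize the edges of $H$ gives $\mathbb P_{\mathrm{config}}(H\subseteq G)\le\prod_i[g_i]_{h_i}/[nd_1]_{|H|}$; the $-4d_1^2-1$ correction is the price for passing to the uniform simple model.
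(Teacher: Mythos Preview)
The paper does not prove this proposition; it is quoted verbatim as Theorem~3.5 of \cite{mckay1981subgraphs} and used as a black box.  Your edge-by-edge switching argument is exactly McKay's method: telescope $\mathbb P(H\subseteq G)$ over $H_0\subset H_1\subset\cdots\subset H_{|H|}$, bound each ratio by double-counting simple switchings that delete the new edge $e_j=(x,y)$, and collect the vertex-wise factors $(g_i-0)(g_i-1)\cdots(g_i-h_i+1)=[g_i]_{h_i}$ in the numerator.  The backward count $(d_1-a_j)(d_2-b_j)$ and the forward lower bound are correct as you describe them; in fact in the biregular bipartite setting your forbidden-edge count gives at most $2d_1d_2+(j-1)\le 2d_1^2+(j-1)$, so you actually recover a slightly sharper denominator than $[nd_1-4d_1^2-1]_{|H|}$, and the hypothesis $nd_1\ge 2d_1(d_1+h_{\max}-2)+|H|+1$ is precisely what keeps every factor positive.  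Your identification of the forward-switching bookkeeping as the delicate step is apt, but the argument goes through without further obstacles.
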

We first prove several estimates on random biregular bipartite graphs based on Proposition \ref{eq:propH}.

\begin{lemma}\label{lem:Mckay}
Let $G$ be a random $(d_1,d_2)$-biregular bipartite graph with $d_2\leq d_1\leq n^{1/3}$.
\begin{enumerate}
	\item Suppose $H$ is a subgraph of the complete graph $K_{n,m}$ in which every vertex has degree at least $2$. Let $e$ be the number of edges  in $H$. Suppose $e=o(n^{1/3})$. Then
	\begin{align} \label{eq:Mckay0}
	\mathbb P(H\subseteq G)\leq c_1\left(\frac{(d_1-1)(d_2-1)}{nm}\right)^{e/2}.
	\end{align}
\item Let $\alpha$ be a cycle of length $2k$ in the complete bipartite graph $K_{n,m}$. Suppose   $k\leq n^{1/10}$, then
 \begin{align}\label{eq:McKay1}
     \mathbb P(\alpha\subseteq G)\leq c_1\left(\frac{(d_1-1)(d_2-1)}{nm}\right)^k.
 \end{align} 
 \item Let $\beta$ be another cycle of length $2j\leq 2n^{1/10}$ in the complete bipartite graph $K_{n,m}$. Suppose $\alpha,\beta$ share $f$ edges. Then
 \begin{align}\label{eq:McKay2}
 \mathbb P(\alpha\cup \beta \subseteq G)\leq c_1\left(\frac{(d_1-1)(d_2-1)}{nm}\right)^{j+k-f/2}. 
 \end{align} 
\end{enumerate}
\end{lemma}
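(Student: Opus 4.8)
The plan is to derive all three bounds from Proposition~\ref{eq:propH} by controlling the relevant falling factorials under the stated degree and size hypotheses. First I would record the basic setup: since $G$ is a $(d_1,d_2)$-biregular bipartite graph, every vertex $i\in V_1$ has $g_i=d_1$ and every vertex $i\in V_2$ has $g_i=d_2$; in all three parts the subgraphs $H$, $\alpha$, $\alpha\cup\beta$ have $h_{\max}\le d_2\le d_1$ once we observe that cycles are $2$-regular and the union of two cycles has maximum degree at most... well, at most $4$, but since $d_1\ge d_2\ge 2$ and actually $d_1\le n^{1/3}$ we still have $h_{\max}=O(1)$, so the hypothesis $nd_1\ge 2d_1(d_1+h_{\max}-2)+|H|+1$ of Proposition~\ref{eq:propH} holds because $nd_1\ge n^{2/3}\cdot n^{1/3}$-type slack dominates $d_1^2=O(n^{2/3})$ and $|H|=o(n^{1/3})$. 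So Proposition~\ref{eq:propH} applies and gives
\[
\mathbb P(H\subseteq G)\le \frac{\prod_{i=1}^v[g_i]_{h_i}}{[nd_1-4d_1^2-1]_{|H|}}.
\]

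Next I would bound the numerator and denominator separately. For the numerator, $[g_i]_{h_i}\le g_i^{h_i}$, so $\prod_i [g_i]_{h_i}\le \prod_{i\in V_1\cap H} d_1^{h_i}\prod_{i\in V_2\cap H}d_2^{h_i}=d_1^{\,e_1}d_2^{\,e_2}$ where $e_1=\sum_{i\in V_1}h_i$ and $e_2=\sum_{i\in V_2}h_i$ are each equal to the number of edges of $H$ (counting from each side), hence $e_1=e_2=e$ and the numerator is at most $(d_1d_2)^{e}$; more carefully, to get the $(d_1-1)(d_2-1)$ rather than $d_1d_2$ I would instead use that in each of parts (2) and (3) every vertex present has degree in $H$ equal to exactly $2$, so along a cycle one can pair up the two half-edges at each vertex and bound $[g_i]_2=g_i(g_i-1)$, which after distributing over the $k$ (resp.\ $j+k$) vertices on each side yields a factor $((d_1-1)(d_2-1))^{k}$ (resp.\ with the shared-edge correction, $((d_1-1)(d_2-1))^{j+k-f/2}$ once shared edges are counted once). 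In part~(1), where $H$ need only have minimum degree $2$, $e/2$ is a valid lower bound on the number of vertices on each side, so a similar grouping gives $((d_1-1)(d_2-1))^{e/2}$ up to a bounded multiplicative constant absorbed into $c_1$. For the denominator, $[nd_1-4d_1^2-1]_{|H|}\ge (nd_1-4d_1^2-1-|H|)^{|H|}=(nd_1)^{|H|}(1-O(d_1/n)-O(|H|/(nd_1)))^{|H|}$; since $d_1\le n^{1/3}$ and $|H|=o(n^{1/3})$ we have $d_1|H|/(nd_1)=o(1)$ and $|H|\cdot d_1/n=o(1)$, so $(1-o(1/|H|))^{|H|}=1-o(1)$ is bounded below by a constant, giving $[nd_1-4d_1^2-1]_{|H|}\ge c\,(nd_1)^{|H|}=c\,(nm)^{|H|/\!\!\ }$... more precisely $(nd_1)^{|H|}$, and then rewriting $nd_1=nm\cdot d_1/(m)$—actually the cleanest is to note $n d_1 = n m \cdot (d_1/m)$ is awkward, so instead I would keep the denominator as $(nd_1)^{|H|}$ and combine: $\frac{(d_1d_2)^{|H|}}{(nd_1)^{|H|}}=(d_2/n)^{|H|}$, and then symmetrize using $d_1d_2$-biregularity ($nd_1=md_2$) to rewrite $(d_2/n)^{|H|/2}(d_1/m)^{|H|/2}\cdot$ (the rest) to land on $((d_1-1)(d_2-1)/(nm))$-powers; I would do this bookkeeping carefully so the exponents match $e/2$, $k$, and $j+k-f/2$ respectively.

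For parts (2) and (3) the exponent arithmetic is the only subtlety: a $2k$-cycle $\alpha$ has $k$ vertices in $V_1$ and $k$ in $V_2$ and $2k$ edges, so $|H|=2k$ and the numerator is $\prod ((g_i)(g_i-1))$ over these $2k$ vertices $=\big((d_1-1)(d_2-1)\big)^k\cdot(d_1d_2)^{?}$—I need to be careful to peel off exactly one factor per vertex, which matches since $h_i=2$ forces $[g_i]_2=g_i(g_i-1)$ exactly. For $\alpha\cup\beta$ sharing $f$ edges, the union has $2k+2j-f$ edges and the degree-$2$ structure is preserved except possibly at shared-path endpoints where a vertex might have degree $3$; there $[g_i]_3\le g_i(g_i-1)(g_i-2)\le (d_1-1)(d_2-1)\cdot d_1$ type bounds still work after absorbing constants, and the number of vertices is $k+j-f/2$ on each side when shared edges form paths, yielding the exponent $j+k-f/2$; the $f/2$ bookkeeping is exactly where I would be most careful, handling the (finitely many, length-$O(k)$) degenerate configurations by a crude bound absorbed into $c_1$.

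The main obstacle I anticipate is not any single inequality but rather making the constant $c_1$ genuinely uniform and the exponent matching exact across all three parts simultaneously—in particular, correctly accounting for vertices of $H$ that might have degree $>2$ in part~(3) (shared edges creating degree-$3$ vertices), and ensuring that the lower bound on the denominator's falling factorial, $[nd_1-4d_1^2-1]_{|H|}\ge c(nd_1)^{|H|}$, is valid uniformly once $d_1\le n^{1/3}$ and $|H|=o(n^{1/3})$ (so that $4d_1^2+1+|H|=o(nd_1)$). Everything else is a routine manipulation of falling factorials and the biregularity identity $nd_1=md_2$.
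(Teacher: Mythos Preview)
Your overall strategy---apply Proposition~\ref{eq:propH} and control the numerator and denominator separately, then use $nd_1=md_2$ to symmetrize---is exactly the paper's approach, and your denominator estimate $[nd_1-4d_1^2-1]_e\ge c\,(nd_1)^e$ under $d_1\le n^{1/3}$, $e=o(n^{1/3})$ is fine.

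What you are missing is a single pointwise inequality on the numerator that eliminates all of the case analysis you are worried about. Since every vertex of $H$ has $h_i\ge 2$, one has
\[
[g_i]_{h_i}=g_i(g_i-1)\cdots(g_i-h_i+1)\le \bigl(g_i(g_i-1)\bigr)^{h_i/2},
\]
valid because each of the $h_i$ factors is at most $g_i$ and all but the first are at most $g_i-1$ (so their product is at most $g_i^{h_i/2}(g_i-1)^{h_i/2}$; check small cases). Multiplying over $i\in V_1$ and $i\in V_2$ and using $\sum_{i\in V_1}h_i=\sum_{i\in V_2}h_i=e$ gives the numerator bound $(d_1(d_1-1))^{e/2}(d_2(d_2-1))^{e/2}$ directly, with no separate treatment of degree-$3$ or degree-$4$ vertices and no ``grouping'' needed. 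Dividing by $(nd_1)^e=(nd_1)^{e/2}(md_2)^{e/2}$ then yields \eqref{eq:Mckay0} immediately. Parts~(2) and~(3) are then literal special cases of part~(1): a $2k$-cycle has $e=2k$, and $\alpha\cup\beta$ has $e=2j+2k-f$, with every vertex of degree at least $2$ in both cases.

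Without this inequality your argument for part~(1) does not go through as written: the crude bound $[g_i]_{h_i}\le g_i^{h_i}$ only gives $(d_1d_2/(nm))^{e/2}$, which differs from the target by a factor $\bigl(\tfrac{d_1d_2}{(d_1-1)(d_2-1)}\bigr)^{e/2}$ that is exponential in $e$, not a constant; and your sentence ``$e/2$ is a valid lower bound on the number of vertices on each side'' has the inequality reversed (it is an upper bound), so the vague ``similar grouping'' is not actually a proof. Likewise your concern about degree-$3$ vertices in part~(3) simply disappears once you use the inequality above.
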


\begin{proof}It suffices to prove \eqref{eq:Mckay0}. Then \eqref{eq:McKay1} and \eqref{eq:McKay2} follow as special cases. Since $H$ has $e$ edges, and $H$ is bipartite, it satisfies \[ \sum_{i\in V_1}h_i=\sum_{i\in V_2}h_i=e.\]
Since $h_i\geq 2$ for all $i\in V(H)$, we know $[g_i]_{h_i}\leq (g_i(g_i-1))^{h_i/2}$. Therefore from Proposition \ref{eq:propH},
\begin{align*}
\mathbb P(H\subseteq G)&\leq \frac{(d_1(d_1-1))^{e/2}(d_2(d_2-1))^{e/2}}{[nd_1-4d_1^2-1]_{e}}\\
&=\left(\frac{(d_1-1)(d_2-1)}{nm}\right)^{e/2} \frac{(nd_1)^{e}}{[nd_1-4d_1^2-1]_{e}}.
\end{align*}

Recall $d_1\leq n^{1/3},e=o(n^{1/3})$, and $(1+x)^r=1+O(rx)$ if $rx\to 0$. We have for some absolute constant $c_1>0$,
\begin{align}\label{eq:approx}
\frac{(nd_1)^{e}}{[nd_1-4d_1^2-1]_{e}}\leq \left(\frac{nd_1}{nd_1-4d_1^2-e }\right)^{e}=\left( 1+\frac{4d_1^2+e}{nd_1-4d_1^2-e}\right)^{e}\leq c_1.
\end{align}
 This proves \eqref{eq:Mckay0}.
\end{proof}

 Let $G$ be a $(d_1,d_2)$-biregular bipartite graph.  Let $C_j$ be the number of cycles of length $2j$ in $G$. We will always represent a cycle by a vertex sequence starting from a vertex in $V_1$.  Suppose $\alpha=(x_1,y_1,\cdots, x_{k},y_{k})$ is a cycle of length $2k$ in $G$ with $x_{i}\in V_1, y_{i}\in V_2$, $1\leq i\leq k,$ where   $y_{k}$ is connected to $x_1$ in the cycle $\alpha$.

  Let $e_i=u_iv_i, e_i'=u_i'v_i'$  be the  edges with  with $u_i, u_i'\in V_1,v_i, v_i'\in V_2$, $1\leq i\leq k$  such that neither $u_{i},u_{i}'$ is adjacent to $y_{i}$  for $1\leq i\leq k$ and neither $v_{i}, v_{i}'$ is adjacent to $x_i$. See the left part of Figure \ref{fig:switching} for an example.

 We now introduce our definitions of switching for biregular bipartite graphs.
 \begin{definition}[forward $\alpha$-switching]\label{def:fswitch}
  Consider the action of deleting all  $4k$  edges  $\tilde{e}_i, 1\leq i\leq 2k$ and  $e_i,e_i', 1\leq i\leq k$, and replacing them by the edges $x_iv_{i}, x_iv_{i}', y_iu_{i}, y_iu_{i}'$ for $1\leq i\leq k$.   We obtain a new biregular bipartite graph $G'$ with the cycle $\alpha$ deleted. We call this action induced by the 6 sequences $(x_i),(y_i), (u_i),(u_i'), (v_i),(v_i'),  1\leq i\leq k$ a \textit{forward $\alpha$-switching}.
  See Figure \ref{fig:switching} for an example. We will consider forward $\alpha$-switchings
only up to cyclic rotation and inversion of indices in $[k]$; that is, we identify the $2k$ different forward $\alpha$-switchings obtained by applying the same cyclic rotation or inversion on $[k]$ to the 6 sequences $(x_i),(y_i), (u_i),(u_i'), (v_i),(v_i'), 1\leq i\leq k$. 
 \end{definition}
 
  \begin{figure}[ht]
 \includegraphics[width=12cm]{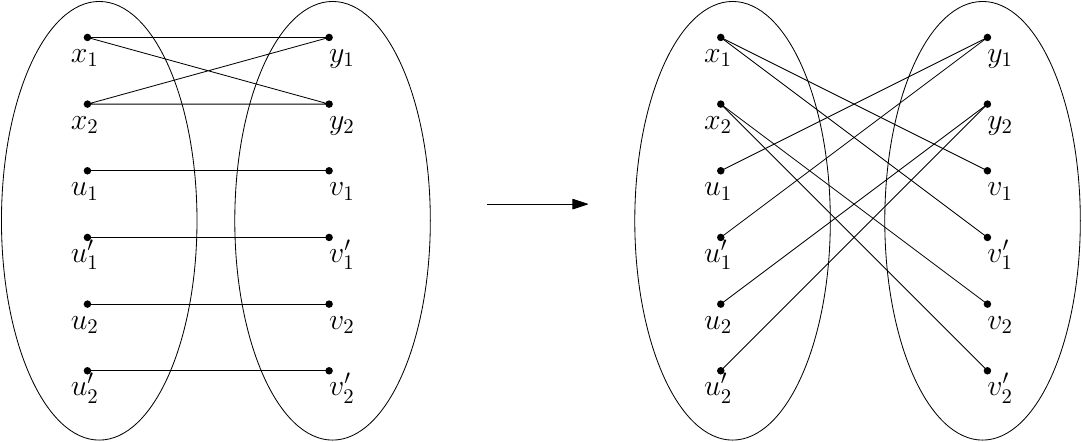}
 \caption{A forward $\alpha$ switching from the left to the right, where $\alpha=(x_1,y_1,x_2,y_2)$.}	\label{fig:switching}
 \end{figure}

\begin{definition}[backward $\alpha$-switching]\label{def:bswitch}
Suppose $G$ contains  paths $v_{i}x_iv_{i}'$ and $u_{i}y_iu_{i}'$ for $1\leq i\leq k$, where $x_i,u_i, u_i'\in V_1,  y_i,v_i,v_i'\in V_2$. Consider  deleting all $4k$  edges $v_{i}x_i, v_i'x_{i}, u_{i}y_i, u_i'y_i$  for $1\leq i\leq k$,  and replacing them with $u_iv_i, u_i'v_i'$, $x_iy_i, y_ix_{i+1}$ for $1\leq i\leq k$. We obtain a new graph $G'$ with a cycle $\alpha=(x_1,y_1,\cdots, x_{k},y_{k})$. Such action is called a \textit{backward $\alpha$-switching} induced by the sequences $(x_i),(y_i), (u_i),(u_i'), (v_i),(v_i'), 1\leq i\leq k$. We also identify the $2k$ different backward $\alpha$-switchings obtained by applying the same cyclic rotation or inversion on the index set $[k]$.
\end{definition}

\begin{definition}[short cycles] Let $r$ be an integer; we say that a cycle is \textit{short} if its length is less than or equal to $2r$. \end{definition}

 We call  a $\alpha$-switching \textit{valid} if $\alpha$ is the only short cycle created or destroyed by the switching. For each forward $\alpha$-switching from $G$ to $G'$, there is a corresponding backward $\alpha$-switching from $G'$ to $G$ by simply reversing the operation (i.e. from right to left  in Figure \ref{fig:switching}).

 Let $F_{\alpha}$  be the number of all valid forward $\alpha$-switchings from $G$ to some $G'$ and let $B_{\alpha}$ be the number of all valid backward $\alpha$-switchings from some $G'$ to $G$. In the following two lemmas, we estimate $F_{\alpha}$ and  $B_{\alpha}$ for biregular bipartite graphs.

 \begin{lemma}\label{lem:F}
Let $G$ be a deterministic $(d_1,d_2)$-biregular bipartite graph with $d_1\geq d_2$   and  cycle counts $C_k, 2\leq k\leq r$. For any short cycle $\alpha\subseteq G$ of length $2k$, we have
\begin{align}\label{eq:upperF}
F_{\alpha}\leq [n]_{k}[m]_kd_1^{k}d_2^k.	
\end{align}
If $\alpha$ does not share an edge with another short cycle, then for an absolute constant $c_1>0$, we have
\begin{align}\label{eq:lowerF}
F_{\alpha}\geq [n]_{k}[m]_k	d_1^{k}d_2^k \left( 1-\frac{4k\sum_{j=2}^rj C_j+c_1k(d_1-1)^r(d_2-1)^r}{nd_1}\right).
\end{align}	
 \end{lemma}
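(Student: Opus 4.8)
The plan is to count valid forward $\alpha$-switchings by first counting \emph{all} forward $\alpha$-switchings (valid or not) and then subtracting the invalid ones. Recall that a forward $\alpha$-switching is determined by the data of the $2k$ edges $e_i, e_i'$ (equivalently, the $2k$ vertices $u_i, u_i', v_i, v_i'$), subject only to the constraints that $u_i, u_i'$ are not adjacent to $y_i$ and $v_i, v_i'$ are not adjacent to $x_i$, and modulo the identification by cyclic rotation and inversion of $[k]$. For the upper bound \eqref{eq:upperF}, I would simply ignore the non-adjacency constraints and the quotient by the dihedral action: for each of the $k$ indices we must pick $v_i, v_i' \in V_2$ and then an edge at each of them (a choice of at most $d_2$ neighbors), and symmetrically $u_i, u_i' \in V_1$ with an edge at each (at most $d_1$ choices); picking the $2k$ distinct $V_1$-vertices among $u_i, u_i'$ contributes at most $[n]_k \cdot (\text{something})$... more cleanly, the $u_i$ range over $[n]$, the $u_i'$ over $[n]$, etc. The cleanest bookkeeping giving exactly $[n]_k[m]_k d_1^k d_2^k$ is: the $k$ vertices $v_1, \dots, v_k$ are distinct elements of $V_2$ (since they lie on distinct deleted edges incident to distinct $x_i$), contributing $[m]_k$; then each $v_i$ has one of its $\le d_2$ neighbors as $u_i$'s... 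I would sort out the precise pairing so the product of ranges is $[n]_k[m]_k d_1^k d_2^k$, then note dropping constraints only increases the count, giving \eqref{eq:upperF}.

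For the lower bound \eqref{eq:lowerF}, I start from the same count $[n]_k[m]_k d_1^k d_2^k$ of "unconstrained" switchings and estimate how many must be discarded. There are two sources of loss. First, the non-adjacency requirements (each $u_i, u_i'$ avoids the $\le d_1$ neighbors of $y_i$, each $v_i, v_i'$ avoids the $\le d_2$ neighbors of $x_i$, and the $u$'s/$v$'s must be chosen so that the new edges are genuinely new and distinct) remove at most $O(k)$ times a lower-order factor — this is the $c_1 k (d_1-1)^r(d_2-1)^r / (nd_1)$-type term, coming from the requirement that the freshly created edges $x_i v_i$, etc., not accidentally coincide with each other or create a short cycle through the interaction of the $2k$ new edges; the count of "bad" configurations here is a union bound over pairs of the $O(k)$ new edges, each bad event costing roughly a factor $(d_1-1)^r(d_2-1)^r$ relative to free choice. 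Second, the \emph{validity} requirement: the switching is invalid if it destroys some short cycle other than $\alpha$, or creates one. Destroying another short cycle $\beta$ means one of the $4k$ deleted edges lies on $\beta$; since $\alpha$ shares no edge with any short cycle, the deleted edges on $\alpha$ itself are fine, so the only danger is that one of the $2k$ chosen edges $e_i, e_i'$ lies on a short cycle. The number of such choices is at most $\sum_i (\text{edges on short cycles incident to the allowed region})$, which is bounded by $\sum_{j=2}^r 2j C_j$ per pair of choices per index, yielding the $4k \sum_{j=2}^r j C_j / (nd_1)$ term after dividing by the total. Creating a new short cycle is handled similarly and folded into the $c_1 k (d_1-1)^r (d_2-1)^r/(nd_1)$ term via Lemma \ref{lem:Mckay}-style counting of short cycles through a fixed edge.

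Concretely, the steps in order: (1) set up the bijective description of forward $\alpha$-switchings as the tuple $((u_i), (u_i'), (v_i), (v_i'))$ modulo the dihedral action on $[k]$, with the non-adjacency and distinctness constraints; (2) prove \eqref{eq:upperF} by dropping all constraints and the quotient; (3) for \eqref{eq:lowerF}, write $F_\alpha \ge [n]_k[m]_k d_1^k d_2^k - (\text{\# violating non-adjacency/distinctness}) - (\text{\# destroying another short cycle}) - (\text{\# creating a short cycle})$; (4) bound the first bad set by $O(k (d_1-1)^r(d_2-1)^r) \cdot [n]_{k-1}[m]_{k-1}d_1^{k-1}d_2^{k-1}$-type quantities (one "slot" is pinned, costing a factor $nd_1$ in the denominator after normalization); (5) bound the "destroy" set by $4k \sum_{j=2}^r j C_j$ times the same lower-order factor, using that each edge lies on at most $\sum_j 2jC_j$ short-cycle-edges and there are $2k$ pairs of choices; (6) bound the "create" set by counting potential short cycles through a new edge, again absorbing it into the $c_1 k(d_1-1)^r(d_2-1)^r$ term; (7) factor out $[n]_k[m]_k d_1^k d_2^k$ and collect terms. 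The main obstacle I anticipate is step (4)–(6): getting the constant and the exponent exactly right in the $c_1 k (d_1-1)^r (d_2-1)^r$ term, since it requires carefully arguing that a "bad" event pins down enough of the switching data that the residual free choices amount to at most $(d_1-1)^r(d_2-1)^r$ relative to a clean switching — i.e., that each new short cycle, or each forced edge-coincidence, is supported on at most $r$ of the $k$ indices and costs the claimed factor. The rest is routine falling-factorial arithmetic of the kind already done in the proof of Lemma \ref{lem:Mckay}.
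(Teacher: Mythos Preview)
Your overall structure---count all switchings, then subtract invalid ones---matches the paper. The upper bound is correct in spirit, but your bookkeeping is muddled: the clean parametrization is to choose the $k$ edges $e_i$ by first picking \emph{distinct} $u_i\in V_1$ (contributing $[n]_k$) and then a neighbor $v_i$ of each ($d_1^k$ choices), and to choose the $e_i'$ by picking distinct $v_i'\in V_2$ ($[m]_k$) and then a neighbor $u_i'$ of each ($d_2^k$). Your guess that ``$v_1,\dots,v_k$ are distinct elements of $V_2$'' has the wrong slot.

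The real gap is in step~(6), the ``create'' case. You propose to handle it by ``Lemma~\ref{lem:Mckay}-style counting of short cycles through a fixed edge,'' but Lemma~\ref{lem:Mckay} is a probabilistic statement about a \emph{random} $G$; here $G$ is deterministic, so it gives you nothing. More seriously, a newly created short cycle $\beta$ in $G'$ need not pass through a single new edge: it can weave together several of the $4k$ new edges $x_iv_i,x_iv_i',y_iu_i,y_iu_i'$ joined by paths in $G\cap G'$. Directly enumerating such $\beta$ and the configurations producing them is a mess, and your sketch gives no mechanism for it.

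The paper sidesteps this entirely. Rather than counting switchings that create a short cycle, it writes down four \emph{sufficient distance conditions} on the choice of $(e_i,e_i')$: each $e_i,e_i'$ lies on no short cycle; each is at distance $\ge 2r$ from $\tilde e_i$; the $2k$ chosen edges are pairwise at distance $\ge r$; and $v_i,v_i'$ (respectively $u_i,u_i'$) are at distance $\ge 2r$. A short case analysis shows that under these conditions no short cycle other than $\alpha$ can be created or destroyed: any path in $G\cap G'$ joining endpoints of new edges has length $\ge r$, so a putative $\beta$ contains at most one such path, and the few remaining shapes are ruled out individually. The lower bound then follows by subtracting the number of switchings violating each distance condition, and \emph{those} counts are easy because the number of edges within distance $\ell$ of a fixed edge is at most $O\big((d_1-1)^{\lceil\ell/2\rceil}(d_2-1)^{\lceil\ell/2\rceil}\big)$ by the degree constraint alone---no probabilistic input needed. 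This is where the $c_1 k(d_1-1)^r(d_2-1)^r/(nd_1)$ term actually comes from. Your step~(5) is essentially the paper's first condition and is fine; it is step~(6) that needs the distance-condition reformulation to go through.
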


\begin{proof}

Consider a cycle denoted by $\alpha=(x_1,y_1,\cdots, x_{k},y_{k})$. Denote  edges 
 \begin{align}\label{eq:etilde}
    \tilde{e}_{i}=x_iy_{i},  \quad   \tilde{e}_{i+k}=y_ix_{i+1}, 1\leq i\leq k,  
 \end{align} 
 where $x_{k+1}:=x_1$. There are at most $[n]_{k}d_1^{k}[m]_kd_2^k$
many ways to choose  edges $e_i=u_iv_i$ and $e_i'=u_i'v_i'$ for $1\leq i\leq k$, which gives the upper bound \eqref{eq:upperF}.  For the  $k$ edges $e_i, 1\leq i\leq k$, we require distinct $u_i\in V_1, 1\leq i\leq k$, and we have $d_1$ choices for each $v_i$, given the degree constraint on $u_i$. This gives $[n]_kd_1^k$ many choices altogether. For the remaining  edges $e_i', 1\leq i\leq k$ we require distinct $v_i'\in V_2, 1\leq i\leq k$ and each for each $u_i'$ we have $d_2$ choices, giving us a factor of $[m]_k d_2^k$.  Therefore \eqref{eq:upperF} holds.

 For the rest of the proof,  we always use the same way to count $\alpha$-switchings by counting the choices of  $e_i,e_i'$. We use the parameter $d_1$ to control the choices from $e_i, 1\leq i\leq k$ and the parameter $d_2$ for the choices from $e_i',1\leq i\leq k$.

To prove the lower bound in \eqref{eq:lowerF}, we choose a subset of configurations that are guaranteed to have a valid forward $\alpha$-switching. Consider $e_i, e_i', 1\leq i\leq k$ such that the following holds: 
\begin{enumerate}
	\item $e_i$ and $e_i'$ are not contained in any short cycle in $G$ for $1\leq i\leq k$.\label{cond1}
\item The distance from any vertex in  $\{e_i,e_i'\}$  to any vertex in $\tilde{e}_i$ is at least $2r$ for any $1\leq i\leq k$.\label{cond2}
	\item The distance between any two different edges among the $2k$ edges $\{e_i,e_i', 1\leq i\leq k\}$ is at least $r$. \label{cond3}
	\item  For all $1\leq i\leq k$,
	 the distance between  $v_{i}$ and $v_{i}'$ is at least $2r$, and the distance between  $u_{i}$ and $u_{i}'$ is at least $2r$.\label{cond4}
	 \end{enumerate}

Recall the definition of $\tilde{e}_i$ in \eqref{eq:etilde}. By Condition \eqref{cond2}, for all $1\leq  i\leq k$, $u_{i},u_{i}'$ are not adjacent to $y_i$, also $v_{i},v_{i}'$ are not adjacent to $x_i$, which satisfies the definition of a forward $\alpha$-switching. Let $G'$ be the  graph obtained  by applying the forward $\alpha$-switching from $G$. We need to check that $\alpha$ is the only cycle deleted in $G$ by this switching, and no other short cycles are created in $G'$. 

Since $\alpha$ shares no edges with other short cycles by our assumptions, deleting $\alpha$ will not destroy other short cycles. From Condition \eqref{cond1}, deleting $e_i,e_i'$ will not destroy any short cycles either.  

Next, we show no other short cycles are created in $G'$. Suppose there exists a new short cycle $\beta$ in $G'$ created by the switching. Then $\beta$ contains paths in $G\cap G'$ separated by edges created in the forward switching   in $G'$ ($\beta$ must contain at least such edge because it is created). Any such path in $G\cap G'$ must have a length at least $r$, because
\begin{itemize}
	\item if it starts and ends at vertices in  $\alpha$ and has length less than $r$, then combining this path with a path in $\alpha$ gives a short cycle in $G$ that intersects $\alpha$, which is a contradiction to our assumption on $\alpha$;
	\item if it starts in $\alpha$ and ends in $\{u_i,v_i, u_i', v_i'\}$ for some $i$ and has length less than $r$, then combining this path with a path in $\alpha$ gives a path between $\tilde{e_i}$, $e_i$ or between $\tilde{e_i}$, $e_i'$ of length less than $2r$, which violates Condition \eqref{cond2};
	\item if it starts and ends in different edges among $\{e_i, e_i',1\leq i\leq k\}$, then it must have length at least $r$ by Condition \eqref{cond3}; 
	\item if it starts at some vertex in $e_i$ and ends at some vertex in $e_i$, then the path must start and end at different vertices in $e_i$. Otherwise,  $\beta$ is not a cycle in the sense of Definition \ref{def:simplecycle}. Then the path combined with $e_i$ is a cycle.  By Condition \eqref{cond1},  it has a length at least $r$, a contradiction. In the same way, it cannot start at some vertex in $e_i'$ and end at some vertex in $e_i'$.
\end{itemize}
This implies $\beta$ contains exactly one path in $G\cap G'$. If not, the two separated paths together with new edges in $G'$ have lengths greater than $2r$, a contradiction to the condition that $\beta$ is a short cycle.
Given the path in $G\cap G'$, the remainder of $\beta$ has two cases: 
\begin{itemize}
	\item a single edge that can be $x_iv_{i}$, $x_iv_i'$, $y_iu_{i}$, or $y_iu_i'$  for some $1\leq i\leq k$, then by Condition \eqref{cond2}, the path in $G\cap G'$ connecting the two vertices in the edge has length at least $2r$, which is a contradiction to the fact that $\beta$ is a short cycle; 
	\item a single path $v_{i}x_iv_{i}'$ or $u_{i}y_iu_{i}'$, which is impossible by Condition \eqref{cond4}.
\end{itemize}

From the analysis above, no such $\beta$ can exist, hence any $\alpha$-switching satisfying Conditions \eqref{cond1}-\eqref{cond4} is valid. 

Next, we find the number of all switchings  satisfying Conditions \eqref{cond1} to \eqref{cond4} to have a lower bound on $F_{\alpha}$. We will do this by bounding from above the number of switchings out of the $[n]_{k}[m]_kd_1^{k}d_2^k$ many choices counted in \eqref{eq:upperF} that fail one of the Conditions \eqref{cond1}-\eqref{cond4}. We treat the $4$ conditions in the following (a)-(d) parts. 

 (a) There are a total of at most $\sum_{i=2}^r 2jC_j$ edges in all short cycles of $G$.  For some $1\leq i\leq k$, if we choose  one edge $e_i$  from a short cycle  and the other $(2k-1)$    edges arbitrarily,  we obtain a forward $\alpha$-switching that fails Condition \eqref{cond1}. The number of all possible choices is at most
 \[k\sum_{j=2}^{r}2jC_j \cdot 	[n-1]_{k-1}[m]_kd_1^{k-1}d_2^k.\]
 And if we choose $e_i'$ from a short cycle and the other $(2k-1)$  edges arbitrarily, the number of all possible choices is at most
 \[k\sum_{j=2}^{r}2jC_j \cdot 	[n]_{k}[m-1]_{k-1}d_1^{k}d_2^{k-1}.\]
 Altogether the number of  choices is at most 
	\begin{align}\label{eq:abcd1}
 \frac{4}{nd_1}[n]_{k}[m]_k(d_1d_2)^kk\sum_{j=2}^{r}jC_j.
	\end{align}

	(b) To fail Condition \eqref{cond2}, we can obtain $\alpha$-forward switchings by choosing   $(2k-1)$ edges arbitrarily, and then  choose one  edge $e_i$ or $e_i'$ that is at most of distance $2r-1$ from $\tilde {e}_i$ for some $1\leq i\leq k$. From the degree constraints, the number of edges of distance less than $2r$ from some edge is at most $O((d_1-1)^r(d_2-1)^r)$.  Similar to Part (a), by considering whether $e_i$ or $e_i'$ is chosen for $1\leq i\leq k$,
	the number of such switchings  is at most 
	\begin{align}
&\left([n-1]_{k-1}[m]_kd_1^{k-1}d_2^k +[n]_{k}[m-1]_kd_1^{k}d_2^{k-1} \right)\cdot k \cdot  O((d_1-1)^r(d_2-1)^r) \notag\\
=&\frac{1}{nd_1}	[n]_{k}[m]_k(d_1d_2)^k k O((d_1-1)^r(d_2-1)^r).	\label{eq:abcd2}
	\end{align}
	
	(c)  For Condition \eqref{cond3}, there are three cases to consider depending on  whether the pair is $(e_i,e_j)$, $(e_i',e_j')$ or $(e_i,e_j')$.
	
	Suppose the pair $e_i,e_j$ violates Condition \eqref{cond3}. We pick the pair of edges  that are within distance $r-1$ and pick the remaining $(2k-2)$ edges arbitrarily. There are $(nd_1)$ many ways to choose $e_i$. When $e_i$ is fixed,  there are at most 
	$ O((d_1-1)^{(r+1)/2} (d_2-1)^{(r+1)/2})$ choices for $e_j$. Hence the number of switchings that fail Condition \eqref{cond3} is at most 
  \begin{align*}
   &   (nd_1)\cdot [    O((d_1-1)^{(r+1)/2}(d_2-1)^{(r+1)/2})] \cdot k(k-1)	\cdot ([n-2]_{k-2} [m]_k d_1^{k-2} d_2^k)\\
  =&\frac{1}{nd_1}[n]_k[m]_kd_1^{k}d_2^k \cdot  k^2\cdot  O\left((d_1-1)^{(r+1)/2}(d_2-1)^{(r+1)/2}\right). 
  \end{align*}
   By the same argument, if the pair is $(e_i',e_j')$, the number of switchings that fail Condition \eqref{cond3} is at most
 \begin{align*}
 \frac{1}{nd_1}[n]_k[m]_kd_1^kd_2^{k} k^2\cdot O\left((d_1-1)^{(r+1)/2}(d_2-1)^{(r+1)/2}\right).
 \end{align*}
  When the two edges of the pair violating Condition \eqref{cond3} are $e_i,e_j'$ for some $i,j$, the number is at most  
	\begin{align*}
	&(nd_1)\cdot [  O((d_1-1)^{(r+1)/2}(d_2-1)^{(r+1)/2})]\cdot (2k^2)\cdot ([n-1]_{k-1}[m-1]_{k-1}d_1^{k-1} d_2^{k-1}) \\
	=&\frac{1}{nd_1}[n]_k[m]_kd_1^kd_2^{k} k^2O\left((d_1-1)^{(r+1)/2}(d_2-1)^{(r+1)/2}\right).
	\end{align*}
	Combining the three cases in Part (c),  the number of switchings that violate Condition \eqref{cond3} is at most
	\begin{align}\label{eq:abcd3}
	\frac{1}{nd_1}[n]_k[m]_k(d_1d_2)^kk^2\cdot O\left((d_1-1)^{(r+1)/2}(d_2-1)^{(r+1)/2}\right).
	\end{align}

	(d)  Since the distance between a pair of vertices in $V_1$ or $V_2$ must be even, to violate Condition \eqref{cond4}, we can choose a pair $u_{i},u_{i}'\in V_1$ or $v_{i},v_{i}'\in V_2$ that are within distance $2r-2$ first, then choose other edges arbitrarily. Similar to the cases above, the number of  switchings that fail  Condition \eqref{cond4} is  at most   
	\begin{align}\label{eq:abcd4}
	  \frac{1}{nd_1}[n]_k[m]_kd_1^{k}d_2^kkO((d_1-1)^{r}(d_2-1)^{r}).
	\end{align}

Combining the $4$ Cases (a)-(d) above, from \eqref{eq:abcd1}, \eqref{eq:abcd2},\eqref{eq:abcd3}, and \eqref{eq:abcd4}, we have at most
\begin{align*}
\frac{4}{nd_1}[n]_{k}[m]_k (d_1d_2)^{k} \left(k\sum_{j=2}^r j C_j+ O(k(d_1-1)^{r}(d_2-1)^r)	\right)
\end{align*}
many switchings that fail one of the Conditions \eqref{cond1}-\eqref{cond4} among the  $[n]_{k}[m]_k(d_1d_2)^{k}$ possible switchings. Then for an absolute constant $c_1>0$, 
\begin{align*}
F_{\alpha}\geq [n]_{k}[m]_kd_1^{k}d_2^k\left(1-\frac{4k\sum_{j=2}^r jC_j+c_1k(d_1-1)^{r}(d_2-1)^{r}}{nd_1} \right).	
\end{align*}
Therefore \eqref{eq:lowerF} holds.
\end{proof}

For the number of backward switchings, we obtain a similar upper bound, but the lower bound is only in expectation.
\begin{lemma}\label{lem:B}
Let $G$ be a random $(d_1,d_2)$-biregular bipartite graph	and let $\alpha$ be a cycle of length $2k\leq 2r$ in the complete bipartite graph $K_{n,m}$. Let $B_{\alpha}$ be the number of valid backward switchings from $G$ that create $\alpha$. Then 
\begin{align}\label{upperB}
B_{\alpha}\leq (d_1(d_1-1))^k(d_2(d_2-1))^k,
\end{align}
and there is an absolute constant $c_2>0$ such that
\begin{align}\label{lowerB}
\mathbb E B_{\alpha}\geq 	(d_1(d_1-1))^k(d_2(d_2-1))^k\left(1- \frac{c_2k(d_1-1)^{r}(d_2-1)^{r}}{nd_1}\right).
\end{align}
\end{lemma}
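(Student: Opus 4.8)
The plan is to transcribe the proof of Lemma~\ref{lem:F} to the backward switching, the one structural difference being that here the ``base count'' of candidate configurations is already deterministic, so what must be estimated is the loss coming from validity; and since this loss involves counting short cycles through the \emph{fixed} vertices of $\alpha$, it can only be controlled in expectation (there is no good deterministic bound on the number of short cycles through a given vertex). The upper bound \eqref{upperB} is immediate: in a backward $\alpha$-switching from $G$, for each $i\in[k]$ the vertices $v_i,v_i'$ are two distinct neighbours of $x_i$ and $u_i,u_i'$ two distinct neighbours of $y_i$, so biregularity leaves at most $d_1(d_1-1)$ choices for $(v_i,v_i')$ and at most $d_2(d_2-1)$ for $(u_i,u_i')$; multiplying over $i$ and noting that imposing validity only discards configurations gives \eqref{upperB}.

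For the lower bound I would call a configuration $\big((v_i),(v_i'),(u_i),(u_i')\big)_{i=1}^k$ \emph{admissible} if for each $i$ both $(v_i,v_i')$ is an ordered pair of distinct $G$-neighbours of $x_i$ and $(u_i,u_i')$ an ordered pair of distinct $G$-neighbours of $y_i$. By biregularity there are \emph{exactly} $(d_1(d_1-1))^k(d_2(d_2-1))^k$ admissible configurations, for every $G$. An admissible configuration fails to yield a valid backward $\alpha$-switching only if: (a) some coincidence occurs among the chosen vertices, or between a chosen vertex and a vertex of $\alpha$; (b) one of the added edges $u_iv_i$, $u_i'v_i'$, $x_iy_i$, $y_ix_{i+1}$ already lies in $G$ (this covers $\alpha\subseteq G$); (c) one of the deleted edges $v_ix_i$, $v_i'x_i$, $u_iy_i$, $u_i'y_i$ lies in a short cycle of $G$, so the switching destroys a short cycle; or (d) the switching creates a short cycle $\beta\ne\alpha$ in $G'$. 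Thus $B_\alpha\ge(d_1(d_1-1))^k(d_2(d_2-1))^k-N$, where $N=N(G)$ is the number of admissible configurations failing one of (a)--(d), and it suffices to prove $\expval N\le c_2\,\frac{k(d_1-1)^r(d_2-1)^r}{nd_1}(d_1(d_1-1))^k(d_2(d_2-1))^k$.

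The key point for bounding $\expval N$ is that once a single ``defect'' is fixed — a prescribed coincident pair; a prescribed added edge being present; a prescribed short cycle through some $x_i$ or $y_i$; or (for (d)) a prescribed short path in $G\cap G'$ witnessing the new cycle — the number of ways to complete the remaining $v_\ell,v_\ell',u_\ell,u_\ell'$ is again a deterministic product of factors $d_1$ and $d_2$ (each remaining $v_\ell$ still ranges over the $d_1$ neighbours of $x_\ell$, etc.). Hence, for each kind, the expected number of offending admissible configurations factors as $(\text{deterministic completion count})\times(\text{expected number of defects})$, and the latter is a sum of subgraph-containment probabilities estimated by Lemma~\ref{lem:Mckay}. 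For (a), (b), (c) this is routine: a defect forces respectively two deleted edges sharing a vertex (or meeting $\alpha$), a path of length $\le 3$ through $\alpha$ in $G$, or a length-$2j$ cycle through a fixed vertex of $\alpha$; the expected number of such structures is $O((d_1-1)^j(d_2-1)^j/n)$ or $O((d_1-1)^j(d_2-1)^j/m)$ with $j\le r$, and summing over $j$, over the $O(k)$ positions and edges, and dividing by the degree released by the completion count — using $md_2=nd_1$ and $d_2\le d_1\le n^{1/3}$ exactly as in Lemma~\ref{lem:Mckay} — gives a contribution of the asserted order.

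The main obstacle is case (d). As in Lemma~\ref{lem:F}, I would restrict to admissible configurations satisfying distance conditions analogous to \eqref{cond1}--\eqref{cond4} (every deleted or added vertex far in $G$ from $\alpha$ and pairwise far from the others, and $v_i$ far from $v_i'$, $u_i$ far from $u_i'$), and rerun the bulleted dichotomy of that proof to show $\alpha$ is then the only short cycle created: any other short cycle $\beta$ in $G'$ must contain a path of length $\ge r$ inside $G\cap G'$ joined by added edges, and the endpoints of such a path would force a short cycle or short path in $G$ violating one of the conditions. The number of admissible configurations violating a distance condition is then bounded in expectation exactly as above, each violation pinning down a short path in $G$ between two prescribed vertices (or a prescribed vertex and $\alpha$), of expected number $O((d_1-1)^r(d_2-1)^r)$ up to the relevant $1/(nm)$-type factor, with the deterministic completion count absorbing the rest. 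Summing the contributions from (a)--(d) yields \eqref{lowerB}. The genuinely laborious part is transferring the case analysis of Lemma~\ref{lem:F} to the backward setting — ``edges chosen anywhere in $G$'' becoming ``neighbours of the fixed vertices $x_i,y_i$'' — and keeping the distance bookkeeping consistent; everything else is a direct adaptation.
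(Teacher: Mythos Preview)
Your approach is viable but is \emph{not} the one the paper takes, and the difference is worth pointing out.

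The paper does not bound $\mathbb E B_\alpha$ directly for the fixed cycle $\alpha$. Instead it introduces $B:=\sum_\beta B_\beta$, the sum over \emph{all} $2k$-cycles $\beta$ in $K_{n,m}$, and lower-bounds $B$. The advantage is that once $\beta$ is also being chosen, the entire count of configurations (paths $v_ix_iv_i'$, $u_iy_iu_i'$) violating the distance conditions becomes a deterministic function of $n,m,d_1,d_2$ obtained purely from degree bounds, exactly as in Lemma~\ref{lem:F}; the only random input is the global cycle count $\sum_j jC_j$ appearing in condition~(a), whose expectation is controlled by Lemma~\ref{lem:Mckay}(b). After taking expectations one gets a lower bound on $\mathbb E B$, and then the exchangeability of vertex labels in the uniform model gives $\mathbb E B_\alpha=\tfrac{2k}{[n]_k[m]_k}\mathbb E B$. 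This symmetrisation is the paper's key simplification: it converts ``short structures through the fixed vertices $x_i,y_i$'' (random, depending on the local neighbourhood of $\alpha$) into ``short structures anywhere in $G$'' (essentially deterministic).

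Your direct approach keeps $\alpha$ fixed and therefore has to estimate, in expectation, quantities like ``number of paths of length $\le 2r$ in $G$ between two prescribed vertices $x_i,y_j$'' or ``number of short cycles through $x_i$''. These are doable via Proposition~\ref{eq:propH}, but note that Lemma~\ref{lem:Mckay} as stated requires every vertex of $H$ to have degree $\ge 2$, which fails for paths anchored at $x_i,y_j$; you would need to invoke Proposition~\ref{eq:propH} directly there. Also, your paraphrase of the distance conditions (``every deleted or added vertex far in $G$ from $\alpha$'') cannot be right as written, since $v_i$ is by construction adjacent to $x_i\in\alpha$; the correct conditions, as in the paper, are pairwise distance requirements between the paths $v_ix_iv_i'$ and $u_jy_ju_j'$ themselves, with the threshold depending on $|i-j|$. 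With those fixes your route goes through, but the paper's symmetrisation trick is both shorter and avoids the need for any subgraph-probability estimates beyond the crude cycle bound.
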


\begin{proof}
	Given $\alpha$, from the degree constraints, the number of choices for $u_i, u_i',v_i, v_i', 1\leq i\leq k$ that yield a valid backward $\alpha$ switching is at most $(d_1(d_1-1))^k(d_2(d_2-1))^k$, which gives \eqref{upperB}.

	 For  the lower bound, we consider the quantity $B:=\sum_{\beta} B_{\beta}$, where $\beta$ is summing over all possible  cycles of length $2k$ in the complete bipartite graph $K_{n,m}$. 
	As in the proof of Lemma \ref{lem:F}, we give conditions that guarantee a valid backward switching.
	
	Assume $\beta=(x_1,y_1,\cdots, x_{k},y_{k})$. We first consider backward switchings that create $\beta$.	 
		Suppose the paths $v_{i}x_iv_{i}', u_{i}y_iu_{i}', 1\leq i\leq k$ in $G$ satisfy the following conditions:
	\begin{enumerate}
		\item The edges $x_iv_{i}, x_iv_{i}', y_iu_{i}, $ and $y_iu_{i}'$ are not contained in any short cycles.\label{conda}
		\item  For $1\leq i\leq k$, the distance between any vertex in the path $v_ix_iv_i'$ and any vertex in the path $u_{i}y_{i}u_{i}'$ is at least $2r$.\label{condb}
		\item For all $1\leq i\leq k$ and $1\leq j\leq k/2$, the distance between the paths $v_ix_iv_i'$ and $v_{i+j}x_{i+j}v_{i+j}'$ (the index $i+j$ is calculated modulo $k$) and  the distance between  $u_iy_iu_i'$ and $u_{i+j}y_{i+j}u_{i+j}'$ are at least $2r-2j+1$. \label{condc}
		\item For $1\leq i\leq k, 1\leq j\leq k/2$, the distance between $v_ix_iv_i'$ and $u_{i+j}y_{i+j}u_{i+j}'$, and the distance between $u_iy_iu_i'$ and $v_{i+j}x_{i+j}v_{i+j}'$ are at least $2r-2j+2$.  \label{condd}
	\end{enumerate} 	
We will show the four conditions above guarantee a valid backward $\beta$-switching.

By Condition \eqref{conda}, no short cycles are deleted. We denote $x\not\sim y$ if two vertices $x,y$ are not connected in $G$.
An immediate consequence of Condition \eqref{condb} ensures that $x_i\not\sim y_{i}$ and $u_i\not\sim v_i$, $u_i'\not\sim v_i'$, and Condition \eqref{condd} ensures that $y_i\not\sim x_{i+1}$. Therefore such  switching can be applied. 

Let $G'$ be the graph obtained by applying the backward $\beta$-switching. We need to check that no short cycles other than $\beta$ are created in $G'$.

Suppose a short cycle $\beta'\not=\beta$ is created. Then $\beta'$ possibly consists of paths in $G\cap G'$, portions of $\beta$, and edges $u_iv_i,u_i'v_i'$ for some $1\leq i\leq k$. 
Any such path in $G\cap G'$ must have length at least $r$ because
	\begin{itemize}
		\item  if it starts in one of the  sets $\{x_i,v_{i},v_{i}'\}$ or $\{y_i, u_{i},u_{i}'\}$ for $1\leq i\leq k$, and ends at a different set $\{x_j,v_{j},v_{j}'\}$ or $\{y_j, u_{j},u_{j}'\}$ for $1\leq j\leq k$, then Conditions \eqref{condb}, \eqref{condc} and \eqref{condd} imply this;
		\item if it starts and ends in the same set $\{x_i,v_i,v_i'\}$ or $\{y_i,u_i,u_i'\}$, then it follows from Condition \eqref{conda} that the path must have length at least $r$.
	\end{itemize}
	It follows that $\beta'$ must contain exactly one such path, otherwise, if two such paths are included in $\beta'$, the length of $\beta'$  is greater than $2r$, a contradiction to the fact that $\beta'$ is a short cycle. 
	
	Besides this path in $G\cap G'$, the remainder of $\beta'$ must either be an edge $u_iv_i$ or $u_i'v_i'$, or a portion of $\beta$. 
		If the remainder is some $u_iv_i$, then the distance between $u_i$ and $v_i$ in $G$ is at most $2r-1$, a contradiction to Condition \eqref{condb}. The same holds if the remainder is some $u_i'v_i'$.

	If the remainder is a portion of $\beta$, then there exist two vertices in $\beta$ connected by the path in $G\cap G'$ contained in $\beta'$. If the two vertices are $x_i, x_{i+j}$ for some $1\leq i\leq k, 1\leq j\leq k/2$, then from Condition \eqref{condc}, the path in $G\cap G'$ contained in  $\beta'$ that connects the two vertices has length at least $2r-2j+1$. Since the path in $\beta$ connecting $x_i,x_{i+j}$ has length $2j$, this implies $\beta'$ has length at least 
	$(2r-2j+1)+2j=2r+1,$ a contradiction. In the same way, if the two vertices are $y_i, y_{i+j}$ for some $1\leq i\leq k, 1\leq j\leq k/2$, we can find a contradiction for $\beta'$ from Condition \eqref{condc}. 
	
	If the two vertices connected by the path  are $x_i,y_{i+j}$ with $1\leq i\leq k, 1\leq j\leq k/2$, then the path in $\beta$ connecting the two vertices has length at least $2j-1$. Combining the path in $G\cap G'$ contained in $\beta'$, from Condition \eqref{condd}, we conclude that $\beta'$ has length at least $(2r-2j+2)+(2j-1)=2r+1,$ a contradiction. By the same argument, if the two vertices connected by the path are $y_{i},x_{i+j}$ for some $1\leq i\leq k, 1\leq j\leq k/2$, we can find a contradiction that $\beta'$ is not a short cycle.

	Therefore such $\beta'$ does not exist, and all backward switchings satisfying Conditions \eqref{conda}-\eqref{condd} are valid.

There are $ {[n]_k[m]_k}/(2k)$ choices for the $2k$-cycle $\beta$ in the complete bipartite graph $K_{n,m}$, and at most $(d_1(d_1-1))^k(d_2(d_2-1))^k$ choices for $u_i,u_i',v_i, v_i', 1\leq i\leq k$ given $\beta$. We now count how many possible backward switchings  violate one of the four Conditions \eqref{conda}-\eqref{condd} to get a lower bound on $B$.  We treat the  Conditions \eqref{conda}-\eqref{condd} in 4 parts.

(a) Suppose Condition \eqref{conda} is violated. We estimate the number of switchings by choosing one edge from the set of  edges in short cycles and the other edges arbitrarily. Note that by our definition of switchings, we identify $2k$ different switchings by applying the cyclic rotation or inversion on $[k]$. Suppose we choose an edge $x_iv_{i}$ or $x_iv_{i}'$ from short cycles, similar to the analysis in Lemma \eqref{lem:F}, the number of switchings is at most 
	\begin{align*}
		&\left(2k\sum_{j=2}^r 2jC_j
\cdot (d_1-1)\right)\cdot \left(\frac{1}{2k}[n-1]_{k-1} (d_1(d_1-1))^{k-1}\cdot [m]_k(d_2(d_2-1))^k\right)\\
=&\frac{2}{nd_1}[n]_k[m]_k[d_1(d_1-1)d_2(d_2-1)]^{2k} \sum_{j=2}^r jC_j.
	\end{align*}
Similarly, if we choose an edge $y_iu_{i}$ or $y_iu_{i}'$ from short cycles,  the number of switchings is at most 
\begin{align*}
    &\left(2k\sum_{j=2}^r 2jC_j
\cdot (d_2-1)\right)\cdot \left(\frac{1}{2k}[m-1]_{k-1}(d_2(d_2-1))^{k-1}\cdot [n]_{k} (d_1(d_1-1))^k\right)\\
=&\frac{2}{nd_1}[n]_k[m]_k[d_1(d_1-1)d_2(d_2-1)]^{2k} \sum_{j=2}^r jC_j.
\end{align*}
  Combining two parts, the number of switchings that violate Condition \eqref{conda} is at most
  \begin{align}\label{eq:CASE1}
  	& \frac{8k}{nd_1}[n]_{k} [m]_{k}[d_1(d_1-1) d_2(d_2-1)]^{k} \sum_{j=2}^r jC_j. 
  \end{align}

(b) Suppose for some $1\leq i\leq k$, two paths $v_ix_iv_i'$ and $u_iy_iu_i'$ are within distance $2r-1$. The number of switching is at most
\begin{align}
    &\frac{[n-1]_{k-1}[m-1]_{k-1}}{2k}[d_1(d_1-1)d_2(d_2-1)]^{k-1}\cdot (knd_1(d_1-1))\cdot O((d_1-1)^{r}(d_2-1)^{r+1}) \notag\\
    =&\frac{1}{nd_1}[n]_k[m]_k(d_1(d_1-1))^k(d_2(d_2-1))^kO((d_1-1)^{r}(d_2-1)^{r}).\label{eq:suppose1}
\end{align}

(c) Suppose for some $1\leq i\leq k,1\leq j\leq k/2$, two paths  $\{v_{i}x_iv_{i}', v_{i+j}x_{i+j}v_{i+j}'\}$  are within distance $2r-2j$. The number of switchings is at most
 \begin{align*}
 &\frac{[n-2]_{k-2}[m]_k}{2k}(d_1(d_1-1))^{k-2}(d_2(d_2-1))^{k}\cdot nd_1(d_1-1) \sum_{i=1}^k\sum_{j=1}^{\lfloor k/2\rfloor}O((d_1-1)^{r-j+2}(d_2-1)^{r-j})\\
 =&\frac{1}{nd_1}[n]_k[m]_k(d_1(d_1-1))^k(d_2(d_2-1))^kO((d_1-1)^{r}(d_2-1)^{r-1}).
 \end{align*}
Suppose for some $1\leq i\leq k, 1\leq j\leq k/2$, two paths $\{u_{i}y_iu_{i}', u_{i+j}y_{i+j}u_{i+j}'\}$ are within distance $2r-2j$. Similarly, the number of switchings is bounded by
 \begin{align*}
 \frac{1}{nd_1}[n]_k[m]_k(d_1(d_1-1))^k(d_2(d_2-1))^kO((d_1-1)^{r}(d_2-1)^{r-1}).
 \end{align*}
Therefore the number of switchings that violate Condition \eqref{condc} is at most
  \begin{align}\label{eq:suppose2}
 \frac{1}{nd_1}[n]_k[m]_k(d_1(d_1-1))^k(d_2(d_2-1))^kO((d_1-1)^{r}(d_2-1)^{r-1}).
 \end{align}

(d) Suppose two paths $v_{i}x_iv_{i}', u_{i+j}y_{i+j}u_{i+j}'$ for some $1\leq i\leq k, 1\leq j\leq k/2$  are within distance  $2r-2j+1$. The number of choices  is at most 
 \begin{align*}
 &\frac{[n]_k[m-1]_{k-1}}{2k}(d_1(d_1-1))^{k}(d_2(d_2-1))^{k-1} \sum_{i=1}^k\sum_{j=1}^{\lfloor k/2 \rfloor}O((d_1-1)^{r-j+1}(d_2-1)^{r-j+2}\\
 =&\frac{1}{nd_1}[n]_k[m]_k(d_1(d_1-1))^k(d_2(d_2-1))^kO\left((d_1-1)^{r}(d_2-1)^{r}\right).
 \end{align*}
Suppose two paths $u_{i}y_iu_{i}', v_{i+j}x_{i+j}v_{i+j}'$ for some $1\leq i\leq k, 1\leq j\leq k/2$  are within distance  $2r-2j+1$. By the same argument, the number of choices  is at most
 \[\frac{1}{nd_1}[n]_k[m]_k(d_1(d_1-1))^k(d_2(d_2-1))^kO\left((d_1-1)^{r}(d_2-1)^{r}\right).\]
 Then the number of switchings that violate Condition \eqref{condd} is at most
 \begin{align}\label{eq:suppose3}
     \frac{1}{nd_1}[n]_k[m]_k(d_1(d_1-1))^k(d_2(d_2-1))^kO\left((d_1-1)^{r}(d_2-1)^{r}\right).
 \end{align}
 
From \eqref{eq:CASE1}, \eqref{eq:suppose1}, \eqref{eq:suppose2} and \eqref{eq:suppose3}, the lower bound of $B$ is given by
 \begin{align}\label{eq:lowerB}
 B\geq \frac{	[n]_k[m]_k}{2k}(d_1(d_1-1))^k(d_2(d_2-1))^k\left( 1-\frac{8k  \sum_{j=2}^r jC_j+O(k(d_1-1)^r(d_2-1)^r)}{nd_1}\right).
 \end{align}
 
 By Lemma \ref{lem:Mckay} (b),
 \[ \mathbb EC_k\leq \frac{[n]_k[m]_k}{2k}\frac{c_1(d_1-1)^k(d_2-1)^k}{n^k m^k}\leq \frac{c_1(d_1-1)^k(d_2-1)^k}{2k}.\]
 Applying the inequality above to \eqref{eq:lowerB}, we obtain
 \begin{align*}
 \mathbb EB\geq \frac{[n]_k[m]_k}{2k}(d_1(d_1-1))^k(d_2(d_2-1))^k\left( 1-\frac{O(k(d_1-1)^r(d_2-1)^r)}{nd_1}\right).
 \end{align*}
 
By the exchangeability of the vertex labels in the uniformly distributed RBBG model, the law of $B_{\beta}$ is the same for any $2k$-cycle $\beta$. Then
\begin{align*}
\mathbb EB_{\alpha}=\frac{2k}{[n]_k[m]_k}\mathbb E B\geq (d_1(d_1-1)d_2(d_2-1))^k\left(1-\frac{c_2k(d_1-1)^{r}(d_2-1)^r}{nd_1}\right), 	
\end{align*} for an absolute constant $c_2>0$. This completes the proof.
\end{proof}

\subsection{Poisson approximation of cycle counts}\label{sec:Poi}

In this section, we prove the cycle counts in RBBGs are asymptotically distributed as Poisson random variables. The main tool we will use is the following  total variation distance bound from \cite{chatterjee2005exchangeable}.

\begin{lemma}[Proposition 10 in \cite{chatterjee2005exchangeable}]\label{lem:exchangeablepair}
Let $W=(W_1,\dots,W_r)$ be a random vector taking values in $\mathbb N^r$, and let the coordinates of $Z=(Z_1,\dots,Z_r)$ be independent Poisson random variables with $\mathbb EZ_k=\mu_k$. Let $W'=(W_1',\dots, W_r')$ be defined on the same space as $W$, with $(W,W')$ an exchangeable pair. For any choice of $\sigma$-algebra $\mathcal F$ with respect to which $W$ is measurable and any choice of constants $c_k$, we have
\begin{align}
d_{\textnormal{TV}}(W,Z)\leq \sum_{k=1}^r\xi_k\left( \mathbb E|\mu_k-c_k\mathbb P(\Delta_k^+\mid \mathcal F)|+\mathbb E|W_k-c_k\mathbb P(\Delta_k^-\mid \mathcal F)|\right),	
\end{align}
where $\xi_k:=\min \{ 1,1.4\mu_k^{-1/2}\}$ and 
\begin{align}\label{eq:eventW}
	\Delta_k^+:=&\{W_k'=W_{k}+1, W_j=W_j', k<j\leq r\},\\
	\Delta_k^-:=&\{W_k'=W_{k}-1, W_j=W_j', k<j\leq r\}.
\end{align}
\end{lemma}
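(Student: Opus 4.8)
The plan is to prove this by Stein's method for multivariate Poisson approximation; the statement is essentially the exchangeable-pair version of the Stein--Chen method, and the argument rests on three ingredients: the Stein equation for the product Poisson law, a uniform bound on the coordinatewise differences of its solution, and the antisymmetry built into an exchangeable pair. First I would introduce the generator of the $\mathbb N^r$-valued immigration--death process whose unique stationary law is $Z$,
\[
(\mathcal A g)(w)=\sum_{k=1}^r\mu_k\bigl(g(w+e_k)-g(w)\bigr)+\sum_{k=1}^r w_k\bigl(g(w-e_k)-g(w)\bigr),
\]
with $e_k$ the $k$-th coordinate vector. For a target set $S\subseteq\mathbb N^r$ one solves the Stein equation $\mathcal A g_S=\mathbf 1_S-\mathbb P(Z\in S)$ (e.g.\ via the semigroup, $g_S=-\int_0^\infty(P_t\mathbf 1_S-\mathbb P(Z\in S))\,dt$), so that $d_{\textnormal{TV}}(W,Z)=\sup_S\bigl|\mathbb E(\mathcal A g_S)(W)\bigr|$, and the whole task reduces to estimating the right-hand side.

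Next I would record the only genuinely analytic input: a uniform bound on $\Delta_k g_S(w):=g_S(w+e_k)-g_S(w)$. Since $\mathcal A=\sum_k\mathcal A_k$ splits over coordinates and $Z$ has independent coordinates, the semigroup factorizes as $P_t=\prod_k P_t^{(k)}$ and the $k$-th difference of $g_S$ inherits the classical one-dimensional $\mathrm{Poisson}(\mu_k)$ Stein factor; this yields $\sup_{w,S}|\Delta_k g_S(w)|\le\xi_k=\min\{1,1.4\,\mu_k^{-1/2}\}$, the constant $1.4$ being the standard Barbour--Eagleson bound for the solution of the one-dimensional Poisson Stein equation. The same estimate controls $g_S(w)-g_S(w-e_k)$, and one checks that evaluations at a negative coordinate never arise because the offending term carries the factor $w_k$, which vanishes on $\{w_k=0\}$.

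Then I would exploit the exchangeable pair. Fixing $k$, set $A_k=c_k\,\Delta_k g_S(W)\,\mathbf 1_{\Delta_k^+}$ and $B_k=c_k\bigl(g_S(W-e_k)-g_S(W)\bigr)\mathbf 1_{\Delta_k^-}$; swapping $W$ and $W'$ carries $\Delta_k^+$ to $\Delta_k^-$ and reverses the sign of the increment, so $A_k+B_k$ is antisymmetric under the swap and exchangeability forces $\mathbb E[A_k+B_k]=0$. Because $\Delta_k g_S(W)$ and $g_S(W-e_k)-g_S(W)$ are $\mathcal F$-measurable, conditioning on $\mathcal F$ turns this into
\[
\sum_{k=1}^r\mathbb E\Bigl[\Delta_k g_S(W)\,c_k\mathbb P(\Delta_k^+\mid\mathcal F)+\bigl(g_S(W-e_k)-g_S(W)\bigr)c_k\mathbb P(\Delta_k^-\mid\mathcal F)\Bigr]=0 .
\]
Subtracting this identity from $\mathbb E(\mathcal A g_S)(W)$ and grouping terms rewrites the latter as
\[
\sum_{k=1}^r\mathbb E\Bigl[\Delta_k g_S(W)\bigl(\mu_k-c_k\mathbb P(\Delta_k^+\mid\mathcal F)\bigr)\Bigr]+\sum_{k=1}^r\mathbb E\Bigl[\bigl(g_S(W-e_k)-g_S(W)\bigr)\bigl(W_k-c_k\mathbb P(\Delta_k^-\mid\mathcal F)\bigr)\Bigr];
\]
applying $|\Delta_k g_S|,|g_S(\cdot-e_k)-g_S(\cdot)|\le\xi_k$, the triangle inequality, and then the supremum over $S$ gives exactly the asserted bound. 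The step I expect to be the main obstacle is the second one: pinning down the Stein factor $\xi_k$ with the explicit constant and verifying it is uniform over target sets $S$ and unaffected by the boundary $\{w_k=0\}$; the exchangeability manipulation and the final assembly are essentially bookkeeping.
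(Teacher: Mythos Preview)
The paper does not prove this lemma; it is quoted from \cite{chatterjee2005exchangeable} and used as a black box. So there is no in-paper argument to compare against, but your sketch has a genuine gap.

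The antisymmetry step is where it breaks. You assert that $A_k+B_k$ is antisymmetric under $(W,W')\mapsto(W',W)$, but the events $\Delta_k^{\pm}$ constrain only the coordinates $j\ge k$: on $\Delta_k^+$ one has $W_k'=W_k+1$ and $W_j'=W_j$ for $j>k$, while $W_j'$ for $j<k$ is unrestricted. Thus on $\Delta_k^+$ it need not be true that $W'=W+e_k$, and after the swap the term $g_S(W'-e_k)-g_S(W')$ is in general \emph{not} equal to $-\bigl(g_S(W+e_k)-g_S(W)\bigr)=-\Delta_k g_S(W)$, because the lower coordinates of $W'$ may differ from those of $W$. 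The identity $\mathbb E[A_k+B_k]=0$ therefore does not follow, and the subtraction that produces your final display is unjustified. This is not cosmetic: the multivariate Stein solution $g_S$ depends on all coordinates of its argument, so the freedom in $W_{<k}'$ genuinely spoils the cancellation.

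The repair --- and this is how the argument in \cite{chatterjee2005exchangeable} runs --- is to replace the multivariate Stein equation by a Lindeberg telescoping over coordinates. Write $\mathbb P(W\in A)-\mathbb P(Z\in A)=\sum_{k=1}^r\bigl[\mathbb P(V^{(k-1)}\in A)-\mathbb P(V^{(k)}\in A)\bigr]$ with $V^{(k)}=(Z_1,\dots,Z_k,W_{k+1},\dots,W_r)$ and $Z$ independent of $W$. Conditioning on $W_{>k}$ and integrating out $Z_{<k}$ reduces the $k$-th summand to a \emph{one-dimensional} Poisson approximation for $W_k$ with a test function $h_k$ depending only on $W_{>k}$; the one-dimensional Stein solution $g_{h_k}$ then depends on $W_k$ and $W_{>k}$ but not on $W_{<k}$. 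Now the function $c_k\bigl(g_{h_k}(W_k')-g_{h_k}(W_k)\bigr)\mathbf 1_{\Delta_k^+\cup\Delta_k^-}$ \emph{is} antisymmetric, because on $\Delta_k^{\pm}$ we have $W_{>k}'=W_{>k}$ (so $h_k$ and hence $g_{h_k}$ are swap-invariant) and $W_k\leftrightarrow W_k'$ flips the sign. This explains the otherwise curious asymmetry in the definition of $\Delta_k^{\pm}$ (only $j>k$ is frozen, not all $j\ne k$); from here your Stein-factor bound $\xi_k$ and the final assembly go through as you wrote.
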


We apply Stein's method to obtain the following Poisson  approximation in total variation distance.

\begin{theorem}\label{thm:Poissoncount}
Let $G$ be a random $(d_1,d_2)$-biregular bipartite graph with cycle counts $(C_k, k\geq 2)$. Let $(Z_k, k\geq 2)$ be independent Poisson random variables with \[\mu_k:=\mathbb EZ_k=\frac{(d_1-1)^{k}(d_2-1)^{k}}{2k}.\]	 For any $n,m\geq 1$ and $r\geq 2, d_1\geq 3$, there exists an absolute constant $c_6>0$ such that 
\[ d_{\textnormal{TV}}((C_2,\dots,C_r),(Z_2,\dots,Z_r))\leq  \frac{c_6\sqrt{r}(d_1-1)^{3r/2}(d_2-1)^{3r/2}}{nd_1}.\]
\end{theorem}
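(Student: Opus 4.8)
The plan is to apply Lemma~\ref{lem:exchangeablepair} with $W=(C_2,\dots,C_r)$, the Poisson parameters $\mu_k=(d_1-1)^k(d_2-1)^k/(2k)$, and the natural choice of constants $c_k$ coming from the switching counts. The exchangeable pair $(W,W')$ will be constructed via the $\alpha$-switchings of Definitions~\ref{def:fswitch} and~\ref{def:bswitch}: given $G$, pick a uniformly random short cycle structure to act on (or do nothing with the complementary probability), then either a forward switching — which decreases some $C_k$ by one — or a backward switching — which increases some $C_k$ by one. The $\sigma$-algebra $\mathcal F$ will be the one generated by $G$ itself (so $\mathbb P(\cdot\mid\mathcal F)$ is a deterministic function of the graph), and the events $\Delta_k^+,\Delta_k^-$ of~\eqref{eq:eventW} will correspond exactly to ``a valid backward (resp. forward) $\alpha$-switching is performed for some $2k$-cycle $\alpha$, changing no shorter cycle count.'' Here the ordering of coordinates $2<3<\cdots<r$ is what makes the ``validity'' notion (only $\alpha$ is the short cycle created/destroyed) line up with the requirement $W_j=W_j'$ for $j>k$ in $\Delta_k^\pm$; this is the reason the switchings were defined to be valid only when $\alpha$ is the unique short cycle affected.

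Concretely, I would let the switching act as follows. From $G$, for each $k$ and each $2k$-cycle $\alpha\subseteq G$, the number of valid forward $\alpha$-switchings is $F_\alpha$; from a graph $G'$ not containing $\alpha$, the number of valid backward $\alpha$-switchings producing $\alpha$ is $B_\alpha$. One sets up the exchangeable pair so that $\mathbb P(\Delta_k^-\mid\mathcal F)$ is proportional to $\sum_{\alpha:\,|\alpha|=2k,\,\alpha\subseteq G}F_\alpha$ and $\mathbb P(\Delta_k^+\mid\mathcal F)$ is proportional to $\sum_{\alpha:\,|\alpha|=2k,\,\alpha\not\subseteq G}B_\alpha$, with the same normalizing constant $N$ (the total number of switching moves available, which is the same from either side of an exchangeable transition). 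The natural choice is $c_k = N / \big((d_1(d_1-1))^k(d_2(d_2-1))^k\big)$ — or rather the reciprocal arrangement so that $c_k\,\mathbb P(\Delta_k^-\mid\mathcal F)$ is close to $C_k$ and $c_k\,\mathbb P(\Delta_k^+\mid\mathcal F)$ is close to $\mu_k$. Indeed, by Lemma~\ref{lem:F}, $F_\alpha = [n]_k[m]_k(d_1d_2)^k(1-\varepsilon_\alpha)$ with $\varepsilon_\alpha = O\big((k\sum_{j\le r}jC_j + k(d_1-1)^r(d_2-1)^r)/(nd_1)\big)$ provided $\alpha$ shares no edge with another short cycle, so $\sum_{\alpha\subseteq G,|\alpha|=2k}F_\alpha \approx C_k\cdot[n]_k[m]_k(d_1d_2)^k$ up to the error from the $\varepsilon_\alpha$ terms plus a negligible contribution from cycles sharing edges (controlled by Lemma~\ref{lem:Mckay}(c) in expectation); and by Lemma~\ref{lem:B}, $\mathbb E B \approx \frac{[n]_k[m]_k}{2k}(d_1(d_1-1)d_2(d_2-1))^k$, which matches $\mu_k$ after normalization.

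The two quantities to bound are then $\mathbb E|\mu_k - c_k\mathbb P(\Delta_k^+\mid\mathcal F)|$ and $\mathbb E|C_k - c_k\mathbb P(\Delta_k^-\mid\mathcal F)|$. For the first: $c_k\mathbb P(\Delta_k^+\mid\mathcal F) = c_k \cdot \frac{1}{N}\sum_{\alpha\not\subseteq G,|\alpha|=2k}B_\alpha$; by the upper bound~\eqref{upperB} and the lower bound~\eqref{lowerB} in expectation, together with the count $[n]_k[m]_k/(2k)$ of $2k$-cycles in $K_{n,m}$ and the fact that a vanishing fraction of them lie in $G$, one gets $|\mu_k - c_k\mathbb P(\Delta_k^+\mid\mathcal F)|$ bounded in expectation by $O(k(d_1-1)^r(d_2-1)^r/(nd_1))\cdot\mu_k$ up to lower-order corrections. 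For the second: $|C_k - c_k\mathbb P(\Delta_k^-\mid\mathcal F)| \le \frac{c_k}{N}\sum_{\alpha\subseteq G,|\alpha|=2k}|[n]_k[m]_k(d_1d_2)^k - F_\alpha| + (\text{error from $\alpha$ sharing edges with short cycles})$, and the first term is $\le C_k\cdot O\big((k\sum_{j\le r}jC_j + k(d_1-1)^r(d_2-1)^r)/(nd_1)\big)$ by~\eqref{eq:upperF}–\eqref{eq:lowerF}; taking expectations and using $\mathbb E C_k \le c_1(d_1-1)^k(d_2-1)^k/(2k)$ and $\mathbb E[C_j C_k]$ bounds from Lemma~\ref{lem:Mckay} (the cross moments come out to products of the means up to constants since distinct short cycles are nearly independent), this expectation is $O(k(d_1-1)^{2r}(d_2-1)^{2r}/(nd_1))$ or so. Finally, multiplying by $\xi_k \le 1.4\mu_k^{-1/2} = O(\sqrt{k}(d_1-1)^{-k}(d_2-1)^{-k})$ and summing over $2\le k\le r$, the dominant contribution comes from $k=r$ and yields the stated bound $c_6\sqrt r (d_1-1)^{3r/2}(d_2-1)^{3r/2}/(nd_1)$ — the exponent $3r/2$ being $2r$ (from the worst error term, which scales like $(d_1-1)^{2r}(d_2-1)^{2r}$, or from the $\mu_k\cdot(d_1-1)^r(d_2-1)^r$ terms) minus $r/2$ (from $\xi_r$), times another $(d_1-1)^{r/2}(d_2-1)^{r/2}$ hidden in the bookkeeping; I would track the powers carefully in this last step.

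The main obstacle I anticipate is the construction of the exchangeable pair itself and verifying exchangeability: one must define a single random operation on $G$ whose "forward" and "backward" instances are in bijection and equiprobable, so that $(G,G')$ is exchangeable, and then check that under the induced coupling the events $\Delta_k^\pm$ of Lemma~\ref{lem:exchangeablepair} are precisely the valid forward/backward $2k$-switching events. The subtlety is that a switching acting at cycle length $2k$ must be guaranteed not to disturb $C_j$ for $j>k$ — this is exactly the "valid" condition, and the bulk of Lemmas~\ref{lem:F} and~\ref{lem:B} (the elaborate distance Conditions~\eqref{cond1}–\eqref{cond4} and~\eqref{conda}–\eqref{condd}) was set up to count precisely the valid switchings — but one still has to argue that the leftover invalid switchings contribute only to the error terms and not to the main term, and that the normalization $N$ (sum of all valid switching counts over all $k$) cancels correctly between the $\mu_k$ and $C_k$ sides. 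Secondarily, controlling $\mathbb E[C_j C_k]$ and $\mathbb E[C_k \sum_j j C_j]$ — needed because the error $\varepsilon_\alpha$ in~\eqref{eq:lowerF} itself involves the random cycle counts — requires a short second-moment computation via Lemma~\ref{lem:Mckay}(c), which I would do by noting that the number of pairs of short cycles sharing at least one edge contributes negligibly, so $\mathbb E[C_jC_k] \le \mu_j\mu_k(1+o(1)) + (\text{negligible})$.
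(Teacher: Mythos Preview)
Your approach is the paper's: exchangeable pair via switchings plus Lemma~\ref{lem:exchangeablepair}, with the forward/backward counts from Lemmas~\ref{lem:F}--\ref{lem:B}. Two places need to be fixed before it closes.

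The exponent bookkeeping on the forward term does not work as you state it. Your bound $\mathbb E|C_k-c_k\mathbb P(\Delta_k^-\mid\mathcal F)|=O\big(k[(d_1-1)(d_2-1)]^{2r}/(nd_1)\big)$ is $k$-independent in the exponent; multiplied by $\xi_k\asymp\sqrt{k}\,[(d_1-1)(d_2-1)]^{-k/2}$ and summed over $2\le k\le r$, the $k=2$ term alone is of order $[(d_1-1)(d_2-1)]^{2r-1}/(nd_1)$, which exceeds the target $3r/2$ for every $r\ge 3$. What you need is the $k$-dependent bound $O\big([(d_1-1)(d_2-1)]^{r+k}/(nd_1)\big)$: the paper gets it by writing, on the event that $\alpha$ shares no edge with another short cycle, $\sum_j 2jC_j=2k+\sum_{\beta\in\mathcal J_\alpha}|\beta|I_\beta$ and bounding $\mathbb E[I_\alpha I_\beta]$ via Lemma~\ref{lem:Mckay}(c) with $f=0$. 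Then $\sum_k\xi_k\,[(d_1-1)(d_2-1)]^{r+k}/(nd_1)=O(\sqrt r\,[(d_1-1)(d_2-1)]^{3r/2}/(nd_1))$, dominated by $k=r$. (Separately, your formula $c_k=N/((d_1(d_1-1)d_2(d_2-1))^k)$ is the wrong normalization; with it $c_k\mathbb P(\Delta_k^-\mid\mathcal F)$ is not close to $C_k$. The paper instead puts weight $1/([n]_k[m]_k(d_1d_2)^k)$ on each valid switching edge of level $k$, pads with self-loops to make every state have degree $d_0$, and then takes $c_k=d_0$ for every $k$.)

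The ``error from $\alpha$ sharing edges with short cycles'' is not handled by Lemma~\ref{lem:Mckay}(c) alone. You also have to count, for each overlap structure, how many ordered pairs $(\alpha,\beta)$ of short cycles in $K_{n,m}$ realize it. The paper does this by classifying $\alpha\cup\beta$ according to the number $p$ of components and $f$ of edges in $\alpha\cap\beta$, invoking the bound $(16r^3)^{p-1}/((p-1)!)^2$ on the number of isomorphism types (Corollary~21 of \cite{dumitriu2016marvcenko}), bounding the number of labelings in $K_{n,m}$ with a parity-of-$f$ case split (because the bipartition constrains which side the excess vertices fall on), and summing over $j,p,f$. This yields $\sum_{\alpha\in\mathcal J_k}\mathbb P(A_3^\alpha)=O([(d_1-1)(d_2-1)]^{k+r}/(nd_1))$, matching the other terms. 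This is the one genuinely combinatorial step your sketch does not identify.
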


\begin{proof} 
If $d_1>n^{1/3}$ or $r>n^{1/10}$, then \[\frac{c_6\sqrt{r}(d_1-1)^{3r/2}(d_2-1)^{3r/2}}{nd_1
}>1\] for a sufficiently large choice of $c_6$ and the theorem holds trivially. Thus we assume $d_1\leq n^{1/3}$ and $r\leq n^{1/10}$.
We now construct an exchangeable pair of random biregular bipartite graphs by taking a step in a reversible Markov chain.

Define a graph $\mathcal G$ whose vertex set consists of all $(d_1,d_2)$-biregular bipartite graphs.	
 If there is a  valid forward  or backward $\alpha$-switching  from a $(d_1,d_2)$-biregular bipartite graph $G_0$ to another graph $G_1$ with the length of $\alpha$ being $2k$, we make an undirected edge in $\mathcal G$ between $G_0,G_1$ and place a weight of \[\frac{1}{[n]_{k}[m]_{k}(d_1d_2)^{k}}\] on each such edge.  Define the degree of a vertex in $\mathcal G$ to be the sum of weights from all adjacent edges. Let $d_0$ be the largest degree in $\mathcal G$. To make $\mathcal G$ regular, we add a weighted loop to each vertex if necessary to increase the degree of all vertices to $d_0$. 

Now consider the simple random walk on $\mathcal G$. This is a reversible Markov chain with respect to the uniform distribution on $(d_1,d_2)$-biregular bipartite graphs. Thus suppose $G$ is a uniformly chosen random biregular bipartite graph, we can obtain another random biregular bipartite graph $G'$ by taking an extra step in the random walk from $G$, and the pair $(G,G')$ is exchangeable.

Let $\mathcal J_k$ be the collection of cycles of length $2k$ in $K_{n,m}$ with $k\leq r$. We have  $|\mathcal J_k|= [n]_k[m]_k/{2k}$. Define $I_{\alpha}=\mathbf{1}\{\alpha \subseteq G\}$.
Then  $C_k=\sum_{\alpha\in \mathcal J_k}\mathbf{1}_{\alpha}.$ Let $I_{\alpha}', C_k'$ be defined  on $G'$ in the same way. Since $G$ and $G'$ are exchangeable, the vectors $(C_2,\dots,C_r)$ and $(C_2',\dots,C_r')$ are also exchangeable. We can then apply Lemma \ref{lem:exchangeablepair} to this exchangeable pair of vectors. Now define two events
\begin{align*}
	\Delta_{k}^+:=&\{C_k'=C_k+1, C_j=C_j', k<  j\leq r\},\\
	\Delta_{k}^-:=&\{C_k=C_k'+1, C_j=C_j', k<  j\leq r\}.
\end{align*}
Through our construction of the exchangeable pair,
\begin{align*}
\mathbb P(\Delta_{k}^+\mid G)&=\sum_{\alpha\in \mathcal J_k}\frac{B_{\alpha}}{d_0[n]_{k}[m]_{k}(d_1d_2)^{k}}	,\\ 
\mathbb P(\Delta_{k}^-\mid G)&=\sum_{\alpha\in\mathcal J_k}\frac{F_{\alpha}}{d_0[n]_{k}[m]_{k}(d_1d_2)^{k}}.
\end{align*}

Applying Lemma \ref{lem:exchangeablepair} with all $c_k=d_0, 1\leq k\leq r$, we have 
\begin{align}
 &d_{\textnormal{TV}}((C_3,\dots,C_r),(Z_3,\dots,Z_r)) \notag\\
  \leq & \sum_{k=2}^r\xi_k\mathbb E\left| \mu_k-\sum_{\alpha\in \mathcal J_k}\frac{B_{\alpha}}{[n]_{k}[m]_{k}(d_1d_2)^{k}}\right|+	\sum_{k=2}^r\xi_k \mathbb E\left| C_k-\sum_{\alpha\in\mathcal J_k}\frac{F_{\alpha}}{[n]_{k}[m]_{k}(d_1d_2)^{k}}\right| \notag\\
  = & \sum_{k=2}^r\xi_k\mathbb E\left| \sum_{\alpha\in \mathcal J_k} \frac{(d_1-1)^k(d_2-1)^k}{[n]_k[m]_k}-\frac{B_{\alpha}}{[n]_{k}[m]_{k}(d_1d_2)^{k}}\right|+	\sum_{k=2}^r\xi_k \mathbb E\left| \sum_{\alpha\in\mathcal J_k}I_{\alpha}-\frac{F_{\alpha}}{[n]_{k}[m]_{k}(d_1d_2)^{k}}\right| \notag\\
  \leq & \sum_{k=2}^r\xi_k \left(\sum_{\alpha\in \mathcal J_k}\mathbb E\left| \frac{(d_1-1)^k(d_2-1)^k}{[n]_k[m]_k}-\frac{B_{\alpha}}{[n]_{k}[m]_{k}(d_1d_2)^{k}}\right|+ \sum_{\alpha\in\mathcal J_k}\mathbb E\left| I_{\alpha}-\frac{F_{\alpha}}{[n]_{k}[m]_{k}(d_1d_2)^{k}}\right| \right). \label{eq:twosum}
  \end{align}
  
  For the rest of the proof, we estimate the following two sums
  \begin{align}
 &\sum_{\alpha\in \mathcal J_k}\mathbb E\left| \frac{(d_1-1)^k(d_2-1)^k}{[n]_k[m]_k}-\frac{B_{\alpha}}{[n]_{k}[m]_{k}(d_1d_2)^{k}}\right|\label{eq:twosum1},\\
 &\sum_{\alpha\in\mathcal J_k}\mathbb E\left| I_{\alpha}-\frac{F_{\alpha}}{[n]_{k}[m]_{k}(d_1d_2)^{k}}\right|  \label{eq:twosum2}
\end{align}
from \eqref{eq:twosum} in different ways. 

(1) \textit{The upper bound on \eqref{eq:twosum1}}.
From Lemma \ref{lem:B}, for all $\alpha\in \mathcal J_k$, 
\begin{align*}
\mathbb E\left| \frac{(d_1-1)^k(d_2-1)^k}{[n]_k[m]_k}-\frac{B_{\alpha}}{[n]_{k}[m]_{k}(d_1d_2)^{k}}\right|&=	\frac{(d_1-1)^k(d_2-1)^k}{[n]_k[m]_k}-\frac{\mathbb EB_{\alpha}}{[n]_{k}[m]_{k}(d_1d_2)^{k}} \notag\\
&\leq \frac{c_2k(d_1-1)^{r+k}(d_2-1)^{r+k}}{nd_1[n]_k[m]_k},
\end{align*}
where the first line is from \eqref{upperB} and the second line is from \eqref{lowerB}.
Therefore   \eqref{eq:twosum1} satisfies
\begin{align}\label{eq:Balpha}
 \sum_{\alpha\in \mathcal J_k}\mathbb E\left| \frac{(d_1-1)^k(d_2-1)^k}{[n]_k[m]_k}-\frac{B_{\alpha}}{[n]_{k}[m]_{k}(d_1d_2)^{k}}\right|	\leq  \frac{c_2[(d_1-1)(d_2-1)]^{r+k}}{2nd_1}.
\end{align}

(2) \textit{The upper bound on \eqref{eq:twosum2}}.
To bound the  summation in \eqref{eq:twosum2}, for a given short cycle $\alpha$, we consider a partition of $\mathcal G$ in the following way:
\begin{align*}
A_1^{\alpha} &= \{ \text{$G$ does not contain $\alpha$}\},\\
A_2^{\alpha} &=\{ \text{$G$  contains $\alpha$, which does not share an edge with another short cycle in $G$}\},\\
A_3^{\alpha} &=\{ \text{$G$ contains $\alpha$, which shares an edge with another short cycle in $G$}\}.	
\end{align*}

Conditioned on $A_1^{\alpha}$, we have  $I_{\alpha}=F_{\alpha}=0$. Conditioned on $A_2^{\alpha}$,  both the upper and lower bounds in Lemma \ref{lem:F} can apply, which yield the following inequality:
\begin{align}
	\left|I_{\alpha}-\frac{F_{\alpha}}{[n]_{k}[m]_{k}(d_1d_2)^{k}}\right|&\leq \frac{4k\sum_{j=2}^rj C_j+c_1k(d_1-1)^r(d_2-1)^r}{nd_1}.
\end{align}
Conditioned on $A_3^{\alpha}$, we have $I_{\alpha}=1,F_{\alpha}=0$. 

With the partition of $\mathcal G$, the following inequality holds:
\begin{align}\label{eq:Ia}
\mathbb  E\left|I_{\alpha}-\frac{F_{\alpha}}{[n]_{k}[m]_{k}(d_1d_2)^{k}}\right| &=  \mathbb  E\left[\mathbf{1}_{A_2^{\alpha}}\left|I_{\alpha}-\frac{F_{\alpha}}{[n]_{k}[m]_{k}(d_1d_2)^{k}}\right|\right]+\mathbb P(A_3^{\alpha})\notag\\
&\leq \frac{2k}{nd_1}\mathbb E\left[\mathbf{1}_{A_2^{\alpha}}\sum_{j=2}^r2jC_j\right]+\frac{c_1k(d_1-1)^r(d_2-1)^r} {nd_1}\mathbb P(A_2^{\alpha})+\mathbb P(A_3^{\alpha}).
\end{align}

Let $\mathcal J_{\alpha}$ be the set of all short cycles in $K_{n,m}$ that share no edges with $\alpha$. On the event $A_2^{\alpha}$, the graph $G$ contains no short cycles outside $\mathcal J_{\alpha}$ except for $\alpha$. Define $|\beta|$ be the length of the cycle $\beta$. Then 
\[ \sum_{j=2}^r 2jC_j=2k+\sum_{\beta\in \mathcal J_{\alpha}} |\beta|I_{\beta}.\]
Therefore the right-hand side of \eqref{eq:Ia} can be bounded by
\begin{align}
	& \frac{4k^2}{nd_1}\mathbb P (A_2^{\alpha})+\frac{2k}{nd_1}\sum_{\beta\in\mathcal J_{\alpha}} |\beta|\mathbb EI_{\alpha}I_{\beta}+\frac{c_1k(d_1-1)^r(d_2-1)^r} {nd_1}\mathbb P(A_2^{\alpha})+\mathbb P(A_3^{\alpha}) \notag\\
	\leq  &\frac{4k^2}{nd_1}\mathbb P (\alpha\subseteq G)+\frac{c_1k(d_1-1)^r(d_2-1)^r} {nd_1}\mathbb P(\alpha\subseteq G)+\frac{2k}{nd_1}\sum_{\beta\in\mathcal J_{\alpha}} |\beta|\mathbb EI_{\alpha}I_{\beta}+\mathbb P(A_3^{\alpha}).\label{eq:4444}
\end{align}
By Lemma \ref{lem:Mckay}(1),
\begin{align*}
	\frac{4k^2}{nd_1}\mathbb P (\alpha\subseteq G) &=O\left( \frac{k^2[(d_1-1)(d_2-1)]^{k}}{nd_1(nm)^k}\right),\\
	\frac{c_1k(d_1-1)^r(d_2-1)^r} {nd_1}\mathbb P(\alpha\subseteq G) &=O\left( \frac{k[(d_1-1)(d_2-1)]^{k+r}}{nd_1(nm)^k}\right).
\end{align*}
Hence the first and the second term in \eqref{eq:4444} combine to yield a corresponding upper bound in \eqref{eq:twosum2} of
\begin{align}\label{eq:sumbound1}
\sum_{\alpha\in\mathcal J_k}\left(\frac{4k^2}{nd_1}\mathbb P (\alpha\subseteq G)+\frac{c_1k(d_1-1)^r(d_2-1)^r} {nd_1}\mathbb P(\alpha\subseteq G) \right)= O\left( \frac{[(d_1-1)(d_2-1)]^{k+r}}{nd_1}\right).
\end{align}

From Lemma \ref{lem:Mckay} (3), we have for any $\beta\in \mathcal J_{\alpha}$ with $|\beta|=2j$, \[\mathbb EI_{\alpha}I_{\beta}=\mathbb P(\alpha\cup \beta \in G)\leq \frac{c_1[(d_1-1)(d_2-1)]^{j+k}}{(nm)^{j+k}}.\]
For $2\leq j\leq r$, there are at most $[n]_j[m]_j/(2j)$ cycles in $\mathcal J_{\alpha}$ of length $2j$. The third term in \eqref{eq:4444}  then satisfies
\begin{align*}
	\frac{2k}{nd_1}\sum_{\beta\in\mathcal J_{\alpha}} |\beta|\mathbb EI_{\alpha}I_{\beta}&\leq \frac{2k}{nd_1}\sum_{j=2}^r\frac{[n]_j[m]_j}{2j}\cdot 2j\cdot \frac{c_1[(d_1-1)(d_2-1)]^{j+k}}{(nm)^{j+k}}\\
	&=O\left( \frac{k[(d_1-1)(d_2-1)]^{r+k}}{nd_1(nm)^{k}}\right).
\end{align*}
Summing over all possible $\alpha\in \mathcal J_k$, we obtain a corresponding term in \eqref{eq:twosum2} of
\begin{align}\label{eq:sumbound2}
	\sum_{\alpha\in \mathcal J_k}\frac{2k}{nd_1}\sum_{\beta\in\mathcal J_{\alpha}} |\beta|\mathbb EI_{\alpha}I_{\beta}&=O\left( \frac{[(d_1-1)(d_2-1)]^{r+k}}{nd_1}\right).
\end{align}

Now given \eqref{eq:sumbound1} and \eqref{eq:sumbound2}, to control \eqref{eq:twosum2},   it remains to  estimate $\sum_{\alpha\in \mathcal J_k}\mathbb P(A_3^{\alpha})$. Let $\mathcal K_{\alpha}$ be the set of all short cycles in $K_{n,m}$ that share an edge with $\alpha$, not including $\alpha$ itself. By a union bound,
\begin{align}\label{eq:A3}
	\sum_{\alpha\in \mathcal J_k}\mathbb P(A_3^{\alpha})\leq \sum_{\alpha\in \mathcal J_k}\sum_{\beta\in \mathcal K_{\alpha}}\mathbb  P(\alpha\cup\beta\subset G). 
\end{align}

From  \eqref{eq:McKay2} in Lemma \ref{lem:Mckay}, the upper bound for $\mathbb  P(\alpha\cup\beta\subset G)$ depends on the lengths of $\alpha, \beta$, and the number of edges that $\alpha,\beta$ share. To get an upper bound on \eqref{eq:A3}, we will classify and count the number of pairs $(\alpha,\beta)$ based on the structure of $\alpha\cup\beta$.

Recall $\alpha$ has length $2k$. Suppose $\beta$ has length $2j$. Let $H=(V(\alpha)\cap V(\beta), E(\alpha)\cap E(\beta))$  be the intersection of $\alpha$ and $\beta$.
 Suppose $H$ has $p$ components and $f$ edges.  Since $H$ is the intersection of two different cycles, $H$ must be a forest with $p+f$ vertices. So $\alpha\cup\beta$ has $2j+2k-p-f$ vertices and $2j+2k-f$ edges. 
 Let $a,b$ be the number of vertices in $\alpha\cup\beta$ that are  from $V_1$ and $V_2$, respectively.  Then \begin{align}\label{eq:a+b}
     a+b=2j+2k-p-f.
 \end{align}

  Let $v_1,v_2$ be the number of vertices in $V_1$ and $V_2$ for $H$, respectively. Then we have
$a =j+k-v_1, 
  b =j+k-v_2,$ and  $|a-b|=|v_1-v_2|.$ 
  Note that each component in $H$ is a path. For each path, the difference between the number of vertices from $V_1$  and $V_2$ is at most $1$. This implies 
 \begin{align}\label{eq:a-b}
   |a-b|=|v_1-v_2|\leq p.  
 \end{align}

 From the proof of Corollary 21 in  \cite{dumitriu2016marvcenko}, the number of all possible isomorphism types of   $\alpha\cup \beta$ given $|\alpha|, |\beta|\leq 2r$  and $p,f\leq 2r$ is at most 
 \[\frac{(16r^3)^{p-1}}{((p-1)!)^2}.\] 
 For each  isomorphism type, as a subgraph in $K_{n,m}$, the number of ways to label it is at most 
$ [n]_a[m]_b+[n]_b[m]_a, $
 where the two terms  come from assigning vertices in $V_1,V_2$ in two ways (pick an arbitrary starting vertex, decide whether it is from $V_1$ or $V_2$, then choose labels accordingly).

 From  \eqref{eq:a+b}, \eqref{eq:a-b}, and the assumption that $n\leq m$, we have that when $f$ is even,
 \[ [n]_a[m]_b+[n]_b[m]_a \leq  2n^{j+k-p-f/2}m^{j+k-f/2}=2n^{-p}(nm)^{j+k-f/2}.\]
 And when $f$ is odd,
\[
   [n]_a[m]_b+[n]_b[m]_a \leq 2n^{-p+1}(nm)^{j+k-f/2-1/2}. 
\]

By  \eqref{eq:McKay2} in Lemma \ref{lem:Mckay}, the probability of any realization of the isomorphism type as a subgraph in $G$ is bounded by 
 \[\frac{c_1[(d_1-1)(d_2-1)]^{j+k-f/2}}{(nm)^{j+k-f/2}}.\]
 With all the estimates above, the right-hand side of \eqref{eq:A3} is now bounded by  
\begin{align}
&\sum_{j=2}^r\sum_{1\leq p,f\leq 2r}\frac{(16r^3)^{p-1}}{((p-1)!)^2}\cdot \left( [n]_a[m]_b+[n]_b[m]_a \right)\cdot \frac{c_1[(d_1-1)(d_2-1)]^{j+k-f/2}}{(nm)^{j+k-f/2}} \notag\\
\leq & \sum_{j=2}^r\sum_{1\leq p,f\leq 2r}\frac{(16r^3)^{p-1}}{((p-1)!)^2}\cdot  (2n^{-p} (nm)^{j+k-f/2})
\cdot \frac{c_1[(d_1-1)(d_2-1)]^{j+k-f/2}}{(nm)^{j+k-f/2}} \mathbf{1}\{ \text{$f$ is even}\}\notag\\
&+\sum_{j=2}^r\sum_{1\leq p,f\leq 2r}\frac{(16r^3)^{p-1}}{((p-1)!)^2}\cdot  (2n^{-p+1} (nm)^{j+k-f/2-1/2})
\cdot \frac{c_1[(d_1-1)(d_2-1)]^{j+k-f/2}}{(nm)^{j+k-f/2}} \mathbf{1}\{ \text{$f$ is odd}\}\notag \\
=&O\left( \frac{[(d_1-1)(d_2-1)]^{k+r-1}}{n}\right)+O\left( \frac{[(d_1-1)(d_2-1)]^{r+k-1/2}}{(nm)^{1/2}}\right)\notag \\
=&O\left( \frac{[(d_1-1)(d_2-1)]^{k+r}}{nd_1}\right).\label{eq:sumbound3}
\end{align}

Combining all estimates from \eqref{eq:sumbound1}, \eqref{eq:sumbound2} and \eqref{eq:sumbound3}, we finally obtain  
\begin{align}\label{eq:225}
\sum_{\alpha\in\mathcal J_k}
	\mathbb E\left| I_{\alpha}-\frac{F_{\alpha}}{[n]_{k}[m]_{k}(d_1d_2)^{k}}\right|&=O\left( \frac{[(d_1-1)(d_2-1)]^{r+k}}{nd_1}\right).
\end{align}
This provides an upper bound for \eqref{eq:twosum2}.

(3) \textit{The upper bound on \eqref{eq:twosum}}.
Now the upper bounds on \eqref{eq:twosum1} and \eqref{eq:twosum2} have been provided in  \eqref{eq:Balpha} and  \eqref{eq:225}, respectively. We are ready to estimate \eqref{eq:twosum}. Recall
\[  \xi_k=\min \{ 1,1.4\mu_k^{-1/2}\}=\frac{2.8\sqrt{k}}{[(d_1-1)(d_2-1)]^{k/2}}.\]
Then from \eqref{eq:Balpha} and \eqref{eq:225}, there is an absolute constant $c_7>0$ such that \eqref{eq:twosum} is bounded by 
\begin{align}
\sum_{k=2}^r \frac{c_7\sqrt{k}[(d_1-1)(d_2-1)]^{r+k/2}}{nd_1}=O\left( \frac{\sqrt{r}[(d_1-1)(d_2-1)]^{3r/2}}{nd_1}\right). 
\end{align} This completes the proof.
\end{proof}

\subsection{Cyclically non-backtracking walks and the Chebyshev polynomials}\label{sec:chebypoly}

In this section, we study non-backtracking walks in  biregular bipartite graphs and relate them to the Chebyshev polynomials. The relation will be used in Section \ref{sec:CLT} to study eigenvalue fluctuations for random biregular bipartite graphs.

\begin{definition}[non-backtracking walk]\label{def:NBW}
We define a \textit{non-backtracking walk} of length $2k$ in a biregular bipartite graph to be a walk $(u_1,v_1,\dots, u_{k},v_k,u_{k+1})$ such that $u_i\in V_1, v_i\in V_2$, $u_{i+1}\not=u_i$, for all $1\leq i\leq k$ and $v_{i+1}\not=v_i$ for all $1\leq i\leq k-1$. Note that in our definition, all such walks start  and end at some vertices from $V_1$.
\end{definition}

\begin{figure}[ht]
    \centering
    \includegraphics{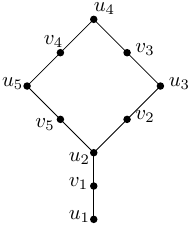}
    \caption{  $(u_2,v_2,u_3,v_3,u_4,v_4,u_5,v_5,u_2)$ is a cyclically non-backtracking walk.  $(u_1,v_1,u_2,v_2,u_3,v_3,u_4,v_4,u_5,v_5,u_2,v_1,u_1)$ is a closed non-backtracking but is not cyclically non-backtracking.}
    \label{fig:NBW}
\end{figure}

\begin{definition}[cyclically non-backtracking walk]
A walk of length $2k$ denoted by \[(u_1,v_1,\dots, u_{k},v_k,u_{k+1})\] is closed if $u_{k+1}=u_1$.
\textit{A cyclically non-backtracking walk} is a closed non-backtracking walk such that its last two steps are not the reverse of its first two steps. Namely, $(u_1,v_1,u_2)\not=(u_{k+1},v_{k},u_k)$.  Figure \ref{fig:NBW} gives an example of a closed non-backtracking walk that is not cyclic non-backtracking.
\end{definition}

Let $G_n$ be a random $(d_1,d_2)$-biregular bipartite graph and $C_k^{(n)}$ be the number of cycles of length $2k$ in $G_n$. Denote $\textnormal{NBW}_k^{(n)}$ to be the number of non-backtracking walk of length $2k$, and $\textnormal{CNBW}_k^{(n)}$ to be the number of cyclically non-backtracking walks of length $2k$ in $G_n$. 
Let $(C_k^{(\infty)},k\geq 2)$  be independent Poisson random variables with mean \[\mu_k=\frac{[(d_1-1)(d_2-1)]^{k}}{2k}.\] We also define $C_1^{(\infty)}=C_1^{(n)}=0$. For $k\geq 1$,
denote 
\begin{align}\label{eq:CNBWinfinity}
    \CNBW_{k}^{(\infty)}=\sum_{j\mid k} 2jC_j^{(\infty)}. 
\end{align}  	

For any cycle of length $2j$ in $G_n$ with $j\mid k$, we can obtain $2j$ cyclically non-backtracking walks by choosing a starting point from $V_1$, fixing a direction and then walking around the cycle of length $2k$ repeatedly. The next lemma shows that $\textnormal{CNBW}_k^{(n)}$ can be approximated by the count of those repeated walks around cycles.

\begin{lemma}\label{lem:EBk}
Let $G_n$ be a random $(d_1,d_2)$-biregular bipartite graph.
	Suppose $d_1\leq n^{1/3},k\leq n^{1/10}$, define
	\begin{align}\label{eq:defBkn}
	B_k^{(n)}=\CNBW_k^{(n)}-\sum_{j\mid k} 2jC_j^{(n)}    
	\end{align} 
to be the number of cyclically non-backtracking walks of length $2k$ in $G_n$ that are not repeated walks around cycles. Then 
\[ \mathbb EB_k^{(n)}\leq \frac{c_7k^7[(d_1-1)(d_2-1)]^{k}}{n}.\]
\end{lemma}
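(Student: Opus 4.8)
The plan is to bound $\mathbb{E}B_k^{(n)}$ by expanding $\CNBW_k^{(n)}$ as a sum over closed non-backtracking walks $W$ of length $2k$ starting in $V_1$, and classifying each such walk by the subgraph $G_W$ it traces in $G_n$. Since a walk of length $2k$ visits at most $2k$ edges, $G_W$ has at most $2k$ edges and at most $2k$ vertices. A cyclically non-backtracking walk that is \emph{not} a repeated traversal of a single cycle must trace a subgraph $G_W$ which is not a single cycle, hence $G_W$ either (i) contains a vertex of degree $\geq 3$, or (ii) is a disjoint-at-a-point union / more complicated structure; in all these cases the number of edges $e(G_W)$ satisfies $e(G_W) \leq 2k - 1$ if $G_W$ is a tree-like/cycle-with-a-chord object, but more importantly, by an Euler-characteristic count, a connected subgraph traversed by a closed walk of length $2k$ that is not a simple cycle satisfies $v(G_W) \leq e(G_W) \leq 2k-1$ when it has exactly one independent cycle, and generally $v(G_W) \le e(G_W) - (\text{number of independent cycles}) + 1$. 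The key point, exactly as in the standard moment-method bookkeeping (and as used in \cite{dumitriu2016marvcenko}), is that for such a walk every edge of $G_W$ must be traversed at least twice except those on "tree parts," and a careful accounting shows $v(G_W) \le k$ — strictly fewer than the $k+1$ vertices one would get from a genuine $2k$-cycle doubled once is not quite the right comparison; rather, the correct statement is that a non-backtracking closed walk tracing a subgraph with $v$ vertices and $e$ edges with $e \ge v$ (i.e.\ at least one cycle) and which is not itself a simple cycle repeated has $v \le k$.

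Concretely, I would proceed as follows. First, fix an isomorphism type $T$ of a connected multigraph-free subgraph with $v_1$ vertices in $V_1$, $v_2$ vertices in $V_2$, and $e = e_1$... — better, let $v = v_1+v_2$ and $e$ be its edge count. The number of closed non-backtracking walks of length $2k$ starting in $V_1$ whose trace is a \emph{labeled} copy of $T$ is at most some combinatorial factor $N(k,T)$ depending only on $k$ and $T$ (not on $n$); a crude bound is $N(k,T) \le (2k)^{2k}$, but one wants something polynomial, and here I would invoke the bound from the proof of Corollary 21 in \cite{dumitriu2016marvcenko}: the number of isomorphism types with at most $2k$ edges and excess $\ge 1$ is at most polynomial in $k$, say $k^{O(1)}$, and for each such type the number of non-backtracking walks realizing it is $k^{O(1)}$ as well. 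Second, the number of ways to embed a labeled copy of $T$ into $K_{n,m}$ is at most $[n]_{v_1}[m]_{v_2} + [n]_{v_2}[m]_{v_1} \le 2 (nm)^{\min} \cdot (\text{max})^{\ldots}$; using $n \le m$ this is $O((nm)^{v/2})$ up to lower-order $n$-powers as in the cycle-counting argument above. Third, by Lemma~\ref{lem:Mckay}(1), $\mathbb{P}(T \subseteq G_n) \le c_1 \big((d_1-1)(d_2-1)/(nm)\big)^{e/2}$ since every vertex of a connected subgraph traced by a closed walk has degree $\ge 2$. Multiplying these three factors, the contribution of type $T$ is $O\big( (nm)^{v/2} \cdot ((d_1-1)(d_2-1))^{e/2} (nm)^{-e/2} \cdot \mathrm{poly}(k)\big) = O\big(\mathrm{poly}(k)\cdot ((d_1-1)(d_2-1))^{e/2} (nm)^{(v-e)/2}\big)$.

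The main obstacle — and the heart of the lemma — is the deterministic combinatorial claim that for a closed non-backtracking walk of length $2k$ which is \emph{not} a repeated traversal of a simple cycle, the trace $G_W$ satisfies $e(G_W) \ge v(G_W) + 1$ (i.e.\ it has excess at least one beyond being unicyclic) \emph{or} is unicyclic but with $v(G_W) \le k$ strictly — in either case $e - v \ge $ something that makes $((d_1-1)(d_2-1))^{e/2}(nm)^{(v-e)/2}$ at most $((d_1-1)(d_2-1))^{k}/n$ after summing the geometric series in $k$. The cleanest route: show $v(G_W) \le k$ for every such non-cycle closed non-backtracking walk. Indeed a non-backtracking walk of length $2k$ that closes up and is not a single cycle must repeat some vertex in a way that forces either a pendant-free structure with $\le k$ vertices; one argues that each "new" vertex beyond the first can be charged to a distinct step, and the non-backtracking + non-cycle condition forces at least $k$ steps to be "returning" steps, leaving $\le k$ vertices. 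Combined with $e \ge v$ (connected, every vertex degree $\ge 2$ means $e\ge v$), we get $(nm)^{(v-e)/2} \le 1$ and an extra factor: since $v \le k$ while a genuine nontrivial walk has $e$ close to $2k$ possible, but conservatively $e \ge v$ gives $((d_1-1)(d_2-1))^{e/2} \le ((d_1-1)(d_2-1))^{k}$... this needs care because $e$ can be up to $2k$. The resolution is that when $e > k$ the factor $(nm)^{(v-e)/2}$ supplies the $1/n$: precisely $(v-e)/2 \le (k - e)/2$, and summing $\sum_{e} \mathrm{poly}(k) ((d_1-1)(d_2-1))^{e/2} n^{(k-e)/2}$ over $e$ from $k+1$ to $2k$ (excess cases) plus the boundary unicyclic-but-$v\le k$ case is dominated, after bounding $(d_1-1)(d_2-1) \le d_1^2 \le n^{2/3}$, by the $e = k+1$ term, giving $O\big(\mathrm{poly}(k) ((d_1-1)(d_2-1))^{k+1/2} n^{-1/2} \cdot n^{-1/2}\big)$-type bounds, and collecting all $\mathrm{poly}(k)$ factors into $k^7$ with an absolute constant $c_7$ yields $\mathbb{E}B_k^{(n)} \le c_7 k^7 [(d_1-1)(d_2-1)]^k / n$. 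I would present the walk-classification lemma carefully (this is where all the combinatorial subtlety lives, and where citing the isomorphism-type count from \cite{dumitriu2016marvcenko} saves the most work), and keep the probability and embedding estimates terse since they mirror the argument already given for $\sum_\alpha \mathbb{P}(A_3^\alpha)$ in Theorem~\ref{thm:Poissoncount}.
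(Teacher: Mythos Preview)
Your overall architecture---classify bad walks by their trace subgraph, bound the embedding count into $K_{n,m}$, and apply Lemma~\ref{lem:Mckay}(1)---is the right one, and is essentially what the paper does. But two of your combinatorial claims are wrong, and the second one is exactly where the difficulty of this lemma lives.

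First, the assertion ``$v(G_W)\le k$ for every bad closed non-backtracking walk of length $2k$'' is false. Take two $4$-cycles sharing a single vertex in $V_1$; the resulting graph has $7$ vertices and $8$ edges, and the obvious Eulerian non-backtracking circuit of length $8$ (so $k=4$) traces it. Here $v=7>4=k$. The correct fact is much weaker: since the trace of a closed non-backtracking walk has minimum degree $\ge 2$, if it is not a single cycle then it has at least two independent cycles, i.e.\ $e-v\ge 1$. In the paper's notation this is exactly $\chi\ge 1$, where $\chi+1$ is the number of coincidence steps.

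Second, and more seriously, your embedding bound ``$[n]_{v_1}[m]_{v_2}+[n]_{v_2}[m]_{v_1}=O((nm)^{v/2})$ up to lower-order $n$-powers'' does not hold when the aspect ratio $m/n=d_1/d_2$ is unbounded, which the lemma allows (e.g.\ $d_2$ fixed, $d_1\asymp n^{1/3}$, so $m\asymp n^{4/3}$). One must track $v_1$ and $v_2$ separately, and the resulting contribution picks up a factor $(m/n)^{v_2-e/2}=(d_1/d_2)^{-\chi_2+(f_1-f_2)/2}$. Nothing in your outline controls $f_1-f_2$, so this factor can blow up. The paper handles this by classifying walks not by the isomorphism type of $G_W$ but by the quadruple $(\chi_1,\chi_2,f_1,f_2)$ of coincidence and forced steps ending in each side, and proving the structural constraint $|f_1-f_2|\le \chi+1$ (Lemma~\ref{lem:Bk}): forced steps occur in at most $\chi+1$ runs, and each run contributes an imbalance of at most one. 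With that constraint the offending factor is at most $((d_1-1)d_2/((d_2-1)d_1))^{(\chi+1)/2}=O(1)$, and the sum over $\chi_1+\chi_2\ge 1$ is dominated by the $\chi=1$ term, producing the $1/n$ and the polynomial $k^7$. If you want to salvage the isomorphism-type route you will need an analogous two-sided bookkeeping; citing the isomorphism-type count from \cite{dumitriu2016marvcenko} alone does not supply it.
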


	We call a cyclically non-backtracking walk \textit{bad} if it's not a repeated walk on a cycle. Then from \eqref{eq:defBkn}, $B_k^{(n)}$ counts the number of bad cyclically non-backtracking walks of length $2k$.  
	
	Let $(w_0,w_1,\dots, w_{2k})$ with $w_{2k}=w_0\in V_1$ be a bad cyclically non-backtracking walk in $K_{n,m}$ of length $2k$. For any $1\leq i\leq 2k$, we say that the $i$-th step of the walk is  
	\begin{itemize}
	    \item \textit{free} if $w_i$ did not previously occur in the walk;
	    \item \textit{a coincidence} if $w_i$ previously occurred in the walk, but the edge $w_{i-1}w_i$ didn't;
	    \item \textit{forced} if the edge $w_{i-1}w_i$ previously occurred in the walk.
	\end{itemize}
	
Let $\chi+1$ be the number of coincidences and $f$ be the number of forced steps in the walk. Let $\chi_1+1$ and $\chi_2$ be the number of coincidence steps ending at a vertex from $V_1$ and $V_2$, respectively. Let $f_1,f_2$ be the number of forced steps ending at a vertex from $V_1$ and $V_2$, respectively. Denote $v,e$  the number of distinct vertices and edges in the cyclically non-backtracking walk, respectively.  We now have the following relations:
\begin{align*}
    \chi+1&=\chi_1+\chi_2+1,\\
    f&=f_1+f_2,\\
    v&=(2k+1)-(\chi+1)-f=2k-\chi-f,\\
    e&=2k-f.
\end{align*}
For any repeated walk on a cycle, the number of coincidences is $1$ and $\chi=0$. Therefore if the walk is bad, we must have $\chi\geq 1$.

The following lemma bounds the number of cyclically non-backtracking walks with given parameters $\chi_1,\chi_2,f_1$, and $f_2$.
\begin{lemma}\label{lem:Bk}
    Consider cyclically non-backtracking walks of length $2k$ on $K_{n,m}$ such that in the subgraph spanned by this walk, all vertices from $V_1$ have degrees at most $d_1$ and vertices from $V_2$ have degrees at most $d_2$. Then the number of such walks with given $\chi_1,\chi_2, f_1,f_2$ satisfying $\chi\geq 1$  is at most \[(2k)^{3(\chi_1+\chi_2)+2}(d_1-1)^{f_2}(d_2-1)^{f_1} n^{k-\chi_1-f_1}m^{k-\chi_2-f_2}.\] Moreover, we must have $|f_1-f_2|\leq \chi+1$.
\end{lemma}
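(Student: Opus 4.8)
The plan is to encode a cyclically non-backtracking walk $(w_0,w_1,\dots,w_{2k})$ with $w_{2k}=w_0\in V_1$ by the sequence of its steps, classified as \emph{free}, \emph{coincidence}, or \emph{forced}, and to count the number of ways each type of step can be realized. A free step at an odd position goes into $V_2$ and a free step at an even position goes into $V_1$; since there are exactly $v=2k-\chi-f$ distinct vertices, of which $k-\chi_1-f_1$ lie in $V_1$ (one of these being the fixed start $w_0$) and $k-\chi_2-f_2$ lie in $V_2$, the free vertices contribute at most $n^{k-\chi_1-f_1}m^{k-\chi_2-f_2}$ once we pick the labels. A forced step is determined by the requirement that $w_{i-1}w_i$ has already appeared, and the non-backtracking condition rules out immediately reversing the previous edge; so from $w_{i-1}$ there are at most $d_1-1$ forced continuations if $w_{i-1}\in V_2$ (the step lands in $V_1$, so this contributes to $f_1$) and at most $d_2-1$ if $w_{i-1}\in V_1$ (contributing to $f_2$). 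This gives the factor $(d_1-1)^{f_2}(d_2-1)^{f_1}$; note the index swap is genuine and arises because a step ending in $V_1$ starts at a $V_2$-vertex whose degree is bounded by $d_1$, etc. — this is the one spot to be careful about.

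Next I would handle the bookkeeping that selects \emph{which} positions are coincidences and forced steps and \emph{which} earlier vertex each coincidence reuses. There are at most $2k$ positions, and once a position is declared a coincidence we must say which of the $\le 2k$ already-visited vertices it revisits; this costs at most $(2k)^2$ per coincidence, so $(2k)^{2(\chi_1+\chi_2)}$ in total for the $\chi+1=\chi_1+\chi_2+1$ coincidences — but since a repeated walk around a cycle has its single mandatory coincidence for free (it is forced once the cycle closes), only the extra $\chi=\chi_1+\chi_2$ coincidences beyond the first need to be located and specified, and this is where the assumption $\chi\ge 1$ is used to keep the exponent at $3(\chi_1+\chi_2)$ rather than $3(\chi_1+\chi_2)+3$. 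I would follow the standard argument (as in McKay's work and the regular-graph analogue in \cite{dumitriu2013functional}) that each coincidence can be ``charged'' so that the number of free/forced maximal runs between consecutive coincidences is controlled, contributing another factor of at most $2k$ per coincidence; combining with the explicit $+2$ slack from choosing the start vertex's role and the closing constraint yields the stated $(2k)^{3(\chi_1+\chi_2)+2}$.

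Finally, for the inequality $|f_1-f_2|\le\chi+1$: a forced step traverses an already-used edge, and the multiset of forced edges, together with the non-backtracking constraint, organizes the walk into an alternating structure where forced steps come in runs that essentially retrace previously built paths; between two consecutive coincidences the walk is non-backtracking, so any maximal forced run alternates between $V_1$- and $V_2$-ending steps and hence contributes a difference of at most $1$ to $f_1-f_2$. Since the number of maximal forced runs is at most the number of coincidences plus one, i.e. at most $\chi+1$, summing these $\pm1$ (or $0$) contributions gives $|f_1-f_2|\le\chi+1$. I expect the main obstacle to be making the run-counting argument of the second paragraph fully rigorous — precisely pinning down why the first coincidence is ``free of charge'' and why each subsequent coincidence costs at most $(2k)^3$ rather than a larger power — since everything else is a direct degree count; I would model this part closely on the corresponding lemma in \cite{dumitriu2013functional}, adapted to keep separate track of the two vertex classes.
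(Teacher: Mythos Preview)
Your overall strategy---classify steps as free, coincidence, or forced and count each type---is exactly the paper's approach, and your argument for $|f_1-f_2|\le\chi+1$ via maximal forced runs is correct and matches the paper. However, two points need fixing.

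First, the degree bounds for forced steps are stated backward. A vertex in $V_2$ has degree at most $d_2$ in the spanned subgraph (not $d_1$), so a forced step starting from $w_{i-1}\in V_2$ has at most $d_2-1$ continuations; since this step ends in $V_1$ it is counted in $f_1$, contributing $(d_2-1)^{f_1}$. Symmetrically, forced steps from $V_1$ give $(d_1-1)^{f_2}$. Your final factor $(d_1-1)^{f_2}(d_2-1)^{f_1}$ is correct, but your explanation (``a $V_2$-vertex whose degree is bounded by $d_1$'') is wrong and would not yield it.

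Second, the ``first coincidence is free of charge'' idea is unnecessary and is what makes your bookkeeping feel fragile. The paper's accounting is direct: choose the \emph{positions} of the $\chi+1$ coincidences among $\{1,\dots,2k\}$ in at most $\binom{2k}{\chi+1}\le(2k)^{\chi+1}$ ways; choose \emph{which earlier vertex} each coincidence revisits in at most $(2k)^{\chi+1}$ ways; this already gives $(2k)^{2\chi+2}$. Forced steps occur only immediately after a coincidence or another forced step, so their positions are determined by specifying how many forced steps follow each of the $\chi+1$ coincidences---a weak composition of $f$ into $\chi+1$ parts, at most $\binom{f+\chi}{\chi}\le(2k)^{\chi}$ choices. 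Multiplying gives $(2k)^{3\chi+2}$ with no need to treat any coincidence specially, and the assumption $\chi\ge1$ plays no role in this count (it is used later, in Lemma~\ref{lem:EBk}, to ensure the walk is bad).
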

\begin{proof} 
We count the number of such cyclically non-backtracking walks by choosing the coincidences, forced steps, and free steps separately. 
Given that there are $\chi+1$ coincidences, there are ${\binom{2k}{\chi+1}}$ many possible subsets of indices in $\{1,\dots,2k\}$ where coincidences can happen. The vertices at a coincidence have already occurred in the walk, so there are at most $2k$ choices for each of them, giving us a total of 
${\binom{2k}{\chi+1}}(2k)^{\chi+1}\leq (2k)^{2\chi+2}$ many choices. 

For forced steps, they can only occur after a coincidence or another forced step. After each coincidence, imagine assigning some number of steps to be forced. The number of ways to do this is at most the number of weak compositions of $f$ elements into $\chi+1$ parts, which is ${\binom{f+\chi}{\chi}}\leq (2k)^{\chi}$. For each forced step ending at a vertex from $V_1$, the walk can only move along an edge that has already been traversed, so there are at most $(d_2-1)$ possible choices 
at every step due to the non-backtracking property. Similarly, for each forced step ending at a vertex from $V_2$ there are at most $d_1-1$ possible choices. Altogether this gives us at most $(2k)^{\chi}(d_1-1)^{f_2}(d_2-1)^{f_1}$ choices for all forced steps.

There are  $k-\chi-1-f_1$ many free steps ending at a vertex from $V_1$, we have at most $n$ choices for the next vertex, and we have an additional $n$ choices for $w_0\in V_1$, which gives a total of at most $n^{k-\chi_1-f_1}$ many choices. Similarly, the number of free steps ending at a vertex from $V_2$ is at most $m^{k-\chi_2-f_2}$. Multiplying together every part from coincidences, forced steps, and free steps give us at most 
\[ (2k)^{3\chi+2}(d_1-1)^{f_2}(d_2-1)^{f_1}n^{k-\chi_1-f_1}m^{k-\chi_2-f_2}\]
many such cyclically non-backtracking walks.

Next, we bound $|f_1-f_2|$. Recall that forced steps can only occur after a coincidence or another forced step. Then there are at most $\chi+1$ many consecutive forced steps starting from a certain coincidence step. In each consecutive forced step, the number of vertices from $V_1$ and $V_2$ differ by at most $1$, since the subgraph spanned by any consecutive forced steps is a path. Hence we have $|f_1-f_2|\leq \chi+1.$
\end{proof}

Equipped with Lemma \ref{lem:Bk}, we continue to prove Lemma \ref{lem:EBk}.
\begin{proof}[Proof of Lemma \ref{lem:EBk}]
By Part (a) in Lemma \ref{lem:Mckay}, the probability that a given bad walk appears in $G_n$ is at most \[c_1\left(\frac{(d_1-1)(d_2-1)}{nm}\right)^{k-f/2}.\] From the upper bound on the number of such walks in Lemma \ref{lem:Bk}, summing over all possibilities of $\chi_1,\chi_2,f_1,f_2$, we  have \begin{align} \footnotesize
    &\mathbb EB_{k}^{(n)} \notag\\
    \leq & \sum_{\substack{\chi_1,\chi_2:\\\chi_1+\chi_2\geq 1}}\sum_{\substack{0\leq f_1,f_2\leq k-1\\ |f_1-f_2|\leq \chi+1}} (2k)^{3\chi+2}(d_1-1)^{f_2}(d_2-1)^{f_1}n^{k-\chi_1-f_1}m^{k-\chi_2-f_2}c_1 \left(\frac{(d_1-1)(d_2-1)}{nm}\right)^{k-f/2} \notag\\
    =&c_1[(d_1-1)(d_2-1)]^k\sum_{\chi_1+\chi_2\geq 1}n^{-\chi_1}m^{-\chi_2} (2k)^{3(\chi_1+\chi_2)+2}\sum_{\substack{0\leq f_1,f_2\leq k-1\\ |f_1-f_2|\leq \chi+1}}\left(\frac{(d_2-1)m}{(d_1-1)n}\right)^{(f_1-f_2)/2}.\notag 
\end{align}

Since  $(d_2-1)d_1\leq (d_1-1)d_2$, the following inequality holds:
\begin{align}\label{eq:dnkkk}
    \sum_{\substack{0\leq f_1,f_2\leq k-1\\ |f_1-f_2|\leq \chi+1}}\left(\frac{(d_2-1)m}{(d_1-1)n}\right)^{(f_1-f_2)/2}=& \sum_{\substack{0\leq f_1,f_2\leq k-1\\ |f_1-f_2|\leq \chi+1}} \left(\frac{(d_2-1)d_1}{(d_1-1)d_2}\right)^{(f_1-f_2)/2}\\
    \leq & k^2\left(\frac{(d_1-1)d_2}{(d_2-1)d_1}\right)^{(\chi+1)/2}.\notag
\end{align}
Since $d_1\leq n^{1/3}, k\leq n^{1/10}$,  \eqref{eq:dnkkk} implies 
\begin{align*}
  \mathbb EB_{k}^{(n)}&\leq c_1 k^2[(d_1-1)(d_2-1)]^k\sum_{\chi_1+\chi_2\geq 1}n^{-\chi_1}m^{-\chi_2} (2k)^{3(\chi_1+\chi_2)+2}\left(\frac{(d_1-1)d_2}{(d_2-1)d_1}\right)^{(\chi+1)/2}\\
  &= k^2[(d_1-1)(d_2-1)]^k O\left( \frac{(2k)^5(d_1-1)d_2}{n(d_2-1)d_1}\right)\\
  &=O\left( \frac{k^7(d_1-1)^{k}(d_2-1)^{k}}{n}\right).  
\end{align*}
This completes the proof of Lemma \ref{lem:EBk}.
\end{proof}

Recall the definition of $\CNBW_k^{(\infty)}$ from \eqref{eq:CNBWinfinity}. The following corollary holds.

\begin{cor} \label{cor:dTVCNBW}
Suppose $d_1\leq n^{1/3}$ and $r\leq n^{1/10}$. There exists a constant $c_8>0$ such that
\begin{align}
	d_{\textnormal{TV}}\left( (\CNBW_k^{(n)},2\leq k\leq r),(\CNBW_k^{(\infty)},2\leq k\leq r)\right)\leq \frac{c_8\sqrt{r}[(d_1-1)(d_2-1)]^{3r/2}}{nd_1}. 
\end{align}
\end{cor}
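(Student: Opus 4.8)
The plan is to deduce Corollary~\ref{cor:dTVCNBW} from Theorem~\ref{thm:Poissoncount} and Lemma~\ref{lem:EBk} by a data-processing argument together with a first-moment (Markov) estimate. Recall that $\CNBW_k^{(n)} = \sum_{j\mid k}2jC_j^{(n)} + B_k^{(n)}$ and $\CNBW_k^{(\infty)} = \sum_{j\mid k}2jC_j^{(\infty)}$. Introduce the auxiliary random vector $\widetilde{\CNBW}_k^{(n)} := \sum_{j\mid k}2jC_j^{(n)}$, so that $\widetilde{\CNBW}_k^{(n)}$ is a deterministic function of $(C_2^{(n)},\dots,C_r^{(n)})$ and $\CNBW_k^{(\infty)}$ is the \emph{same} deterministic function applied to $(C_2^{(\infty)},\dots,C_r^{(\infty)})$. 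Since total variation distance does not increase under a common (possibly randomized) map applied to both arguments, Theorem~\ref{thm:Poissoncount} immediately gives
\[
d_{\textnormal{TV}}\!\left((\widetilde{\CNBW}_k^{(n)},2\leq k\leq r),(\CNBW_k^{(\infty)},2\leq k\leq r)\right)\leq \frac{c_6\sqrt{r}[(d_1-1)(d_2-1)]^{3r/2}}{nd_1}.
\]

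Next I would handle the discrepancy between $\CNBW_k^{(n)}$ and $\widetilde{\CNBW}_k^{(n)}$, which is exactly the ``bad walk'' count $B_k^{(n)}$. Using the general bound $d_{\textnormal{TV}}(U,V)\leq \mathbb P(U\neq V)$ for the coupling in which the two vectors agree unless some $B_k^{(n)}\neq 0$, together with a union bound over $k$ and Markov's inequality, I get
\[
d_{\textnormal{TV}}\!\left((\CNBW_k^{(n)})_{k=2}^r,(\widetilde{\CNBW}_k^{(n)})_{k=2}^r\right)\leq \mathbb P\!\left(\textstyle\bigcup_{k=2}^r\{B_k^{(n)}\geq 1\}\right)\leq \sum_{k=2}^r \mathbb E B_k^{(n)}.
\]
By Lemma~\ref{lem:EBk}, $\sum_{k=2}^r \mathbb E B_k^{(n)}\leq \sum_{k=2}^r \frac{c_7 k^7[(d_1-1)(d_2-1)]^k}{n}$, and since this is a sum of a geometrically growing sequence (in the regime $d_1,d_2\geq 2$; the case $d_2=1$ or small degrees being either trivial or absorbed by enlarging the constant), it is $O\!\left(\frac{r^7[(d_1-1)(d_2-1)]^r}{n}\right)$, which is comfortably $O\!\left(\frac{\sqrt{r}[(d_1-1)(d_2-1)]^{3r/2}}{nd_1}\right)$ after bounding $r^7 = O\bigl([(d_1-1)(d_2-1)]^{r/2}\bigr)\cdot(\text{const})$ when degrees are at least $2$, or more carefully by noting $r\leq n^{1/10}$ so $r^7 n^{-1}$ is much smaller than $n^{-1}(nd_1)\cdot(nd_1)^{-1}$; in any case it is dominated by the main term for a suitable constant.

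Finally, combining the two bounds by the triangle inequality for $d_{\textnormal{TV}}$ yields the claimed estimate with $c_8 = c_6 + (\text{the absorbed constant})$. The only genuinely delicate point is the last comparison: making sure that the first-moment error $\sum_k \mathbb E B_k^{(n)}$, which carries an extra polynomial factor $r^7$ and only a $1/n$ (not $1/(nd_1)$) denominator and exponent $r$ rather than $3r/2$, is indeed dominated by $\frac{\sqrt r[(d_1-1)(d_2-1)]^{3r/2}}{nd_1}$ in all admissible parameter ranges $d_2\leq d_1\leq n^{1/3}$, $r\leq n^{1/10}$, $d_1\geq 3$; when $(d_1-1)(d_2-1)\geq 2$ the extra factor $[(d_1-1)(d_2-1)]^{r/2}$ beats $r^7 d_1$ for $r$ in this range, and when $(d_1-1)(d_2-1)$ is bounded one checks the inequality directly using $r\leq n^{1/10}$. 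Everything else is a routine application of the data-processing inequality and Markov's inequality.
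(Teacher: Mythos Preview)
Your proof is correct and follows essentially the same route as the paper: apply the data-processing inequality to Theorem~\ref{thm:Poissoncount} to pass from cycle counts to the vector $\widetilde{\CNBW}_k^{(n)}$, then use Lemma~\ref{lem:EBk} with Markov's inequality and the coupling bound to control the discrepancy $B_k^{(n)}$, and finish with the triangle inequality. The paper likewise asserts, without detailed justification, that the $O\!\big(r^7[(d_1-1)(d_2-1)]^r/n\big)$ term is absorbed into $O\!\big(\sqrt r\,[(d_1-1)(d_2-1)]^{3r/2}/(nd_1)\big)$; your discussion of this point is already at least as careful as the paper's, and the absorption is indeed valid since $\sup_{r\ge 2,\;d_1\ge 3,\;d_2\ge 2} r^{6.5}d_1/[(d_1-1)(d_2-1)]^{r/2}$ is a finite (if large) absolute constant.
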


\begin{proof}
	By the definition of total variation distance,  for any measurable map $f$ and random variable $X,Y$, we have 
	\begin{align}\label{eq:TVinequality}
d_{\textnormal{TV}}(f(X),f(Y))\leq d_{\textnormal{TV}}(X,Y).	    
	\end{align}
It follows from Theorem  \ref{thm:Poissoncount} that 
	\begin{align}\label{eq:53}
	d_{\textnormal{TV}}\left( \left(  \sum_{j\mid k} 2jC_j^{(n)},2\leq k\leq r\right),\left(\CNBW_k^{(\infty )},2\leq k\leq r \right)\right)\leq \frac{c_6\sqrt{r}[(d_1-1)(d_2-1)]^{3r/2}}{nd_1}. 
\end{align}
By Markov's inequality and Lemma \ref{lem:EBk},
\begin{align}
    \mathbb P(B_{k}^{(n)}\geq 1)\leq \frac{c_7k^7[(d_1-1)(d_2-1)]^{k}}{n}.
\end{align} 
Summing these probabilities for $k=2,\dots,r$ implies
\begin{align}
    \left(  \sum_{j\mid k} 2jC_j^{(n)},2\leq k\leq r\right)=(\CNBW_k^{(n)}, 2\leq k\leq r)
\end{align}
with probability $1-O\left(\frac{r^7[(d_1-1)(d_2-1)]^r}{n}\right)$.  Therefore by the coupling inequality,
\begin{align}\label{eq:57}
    d_{\textnormal{TV}}\left(\left(  \sum_{j\mid k} 2jC_j^{(n)},2\leq k\leq r\right),(\CNBW_k^{(n)}, 2\leq k\leq r)\right)=O\left(\frac{r^7[(d_1-1)(d_2-1)]^r}{n}\right).
\end{align}
From \eqref{eq:53} and \eqref{eq:57}, 
\begin{align*}
   &d_{\textnormal{TV}}\left( (\CNBW_k^{(n)},2\leq k\leq r),(\CNBW_{k}^{(\infty)},2\leq k\leq r)\right)\\
   \leq & \frac{c_6\sqrt{r}[(d_1-1)(d_2-1)]^{3r/2}}{nd_1}+ O\left(\frac{r^7[(d_1-1)(d_2-1)]^r}{n}\right)=O\left(\frac{\sqrt{r}[(d_1-1)(d_2-1)]^{3r/2}}{nd_1}\right).
\end{align*}
\end{proof}

Let $\lambda_1\geq \cdots\geq \lambda_n$ be the eigenvalues of $\frac{XX^{\top}-d_1I}{\sqrt{(d_1-1)(d_2-1)}}$. For the rest of this section, we  connect the spectrum of $\frac{XX^{\top}-d_1I}{\sqrt{(d_1-1)(d_2-1)}}$ with Chebyshev polynomials and cyclically non-backtracking walks.
Define 
\begin{align}
\Gamma_{0}(x)&=1, \notag\\
\Gamma_{2k}(x)&=2T_{2k}\left(\frac x 2\right)+\frac{d_1-2}{(d_1-1)^k},\label{eq:gamma9}	\\
\Gamma_{2k+1}(x)&=2T_{2k+1}\left(\frac x 2\right).\label{eq:gamma99}
\end{align}
Here $\{T_k(x)\}$ are  the  Chebyshev polynomials of the first kind on $[-1,1]$ which satisfy
\begin{align}
    T_0(x)&=1, \quad 
    T_1(x) =x, \notag\\
    T_{k+1}(x) &=2xT_k(x)-T_{k-1}(x).\label{eq:cheby1}
\end{align}
Let $\{U_k(x)\}$ be the Chebyshev polynomials of the second kind on $[-1,1]$ such that
\begin{align*}
U_{-1}(x) &=0,\quad U_0(x) = 1, \\
U_{k+1}(x) &=2xU_k(x)-U_{k-1}(x).	
\end{align*}
Define
\begin{align}\label{eq:pk}
p_k(x)=U_k\left(\frac{x}{2}\right)-\frac{1}{d_1-1}U_{k-2}\left(\frac x 2\right).	
\end{align}
We begin with representing closed non-backtracking walks with $p_k(x)$. The following lemma gives a deterministic identity. Recall in our Definition \ref{def:NBW}, all closed non-backtracking walks start and end at vertices in $V_1$.
\begin{lemma}
Let $\textnormal{NBW}_k^{(n)}$ be the number of closed non-backtracking walks of length $2k$ in a $(d_1,d_2)$-biregular bipartite graph $G$.  Let $\lambda_1\geq \cdots \geq \lambda_n$ be the eigenvalues of $\frac{XX^{\top}-d_1I}{\sqrt{(d_1-1)(d_2-1)}}$.
We have
\begin{align} \label{eq:plambda}
\sum_{i=1}^n p_k(\lambda_i)=(d_1-1)^{-k/2}(d_2-1)^{-k/2}\textnormal{NBW}_{k}^{(n)}.
\end{align}
\end{lemma}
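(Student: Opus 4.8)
The natural route is to realize $\textnormal{NBW}_k^{(n)}$ as the trace of an explicit polynomial in $M:=XX^{\top}-d_1I$, and then to identify that polynomial with $(d_1-1)^{k/2}(d_2-1)^{k/2}p_k$ evaluated at the normalized matrix $\Lambda:=M/\sqrt{(d_1-1)(d_2-1)}$ by matching three–term recursions with the Chebyshev recursion \eqref{eq:cheby1}. Concretely: for $k\ge 0$ let $\mathcal N_k\in\mathbb R^{n\times n}$ have $(u,w)$–entry equal to the number of non-backtracking walks of length $2k$ from $u\in V_1$ to $w\in V_1$, so that $\operatorname{tr}\mathcal N_k=\textnormal{NBW}_k^{(n)}$, and let $\mathcal R_k\in\mathbb R^{n\times m}$ count non-backtracking walks of length $2k+1$ from $V_1$ to $V_2$. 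One checks directly that $\mathcal N_0=I_n$, $\mathcal R_0=X$, and $\mathcal N_1=XX^{\top}-d_1I=M$, the point being that a length-$2$ non-backtracking walk from $u$ to $w$ picks a common neighbour, while the $d_1$ walks $u\to v\to u$ must be removed.

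\textbf{Key step (the recursion).} Extending a non-backtracking walk by one edge and deleting the unique forbidden continuation at the current vertex gives, for $k\ge 1$,
\[
\mathcal R_k=\mathcal N_kX-(d_2-1)\mathcal R_{k-1},\qquad \mathcal N_{k+1}=\mathcal R_kX^{\top}-(d_1-1)\mathcal N_k,
\]
where $d_2-1$ (resp.\ $d_1-1$) counts the admissible edges leaving a vertex of $V_2$ (resp.\ $V_1$) once one edge has been excluded. Eliminating $\mathcal R_k$ produces a recursion for the $\mathcal N_k$ with coefficients that are scalars together with $XX^{\top}$; hence every $\mathcal N_k$ is a polynomial in $XX^{\top}$, equivalently in $M$, equivalently in $\Lambda$ (whose eigenvalues are the $\lambda_i$). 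Now introduce $W_j:=(d_1-1)^{j/2}(d_2-1)^{j/2}U_j(\Lambda/2)$; by the Chebyshev recursion \eqref{eq:cheby1} for the $U_j$ these satisfy $W_{j+1}=MW_j-(d_1-1)(d_2-1)W_{j-1}$ with $W_0=I$, $W_1=M$, which is exactly the recursion solved by $\mathcal N_k$. Tracking the discrepancy at the base of the induction — at length two on the $V_1$–side one subtracts $d_1$ rather than $d_1-1$ walks, which is the genuine ``defect'' — one finds that the combination fitting the initial data of the $\mathcal N_k$ recursion is precisely $(d_1-1)^{k/2}(d_2-1)^{k/2}\big(U_k(\Lambda/2)-\tfrac{1}{d_1-1}U_{k-2}(\Lambda/2)\big)$, which by \eqref{eq:pk} equals $(d_1-1)^{k/2}(d_2-1)^{k/2}p_k(\Lambda)$.

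\textbf{Conclusion and main obstacle.} Once the identity $\mathcal N_k=(d_1-1)^{k/2}(d_2-1)^{k/2}p_k(\Lambda)$ is established (at least after applying $\operatorname{tr}$, where the leftover $\operatorname{tr}M$–type terms from the defect drop out since $\operatorname{tr}M=\operatorname{tr}(XX^{\top})-nd_1=0$), taking traces and using $\operatorname{tr}p_k(\Lambda)=\sum_{i=1}^{n}p_k(\lambda_i)$ gives
\[
\textnormal{NBW}_k^{(n)}=\operatorname{tr}\mathcal N_k=(d_1-1)^{k/2}(d_2-1)^{k/2}\sum_{i=1}^{n}p_k(\lambda_i),
\]
which rearranges to the stated formula. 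The delicate point — and the only place the asymmetry in $d_1,d_2$ really enters — is the careful bookkeeping of the base case and the defect term: one must verify that the two three–term recursions genuinely agree for all $k\ge 1$ once the $\tfrac{1}{d_1-1}$ correction has been inserted, and that the extra low-order terms it produces are invisible to the trace. The rest is the standard Chebyshev generating–function identity $\sum_j U_j(x/2)\,s^j=(1-xs+s^2)^{-1}$ applied to the denominator $1-\Lambda s+s^2$ (after the rescaling $s=\sqrt{(d_1-1)(d_2-1)}\,t$), together with the numerator factor $1-\tfrac{s^2}{d_1-1}$ that converts $U_k$ into $p_k$.
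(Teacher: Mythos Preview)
Your approach is the same idea as the paper's: realize $\textnormal{NBW}_k^{(n)}=\textnormal{tr}\,\mathcal N_k$, find a three-term recursion for $\mathcal N_k$ in $M=XX^\top-d_1I$, and match it with the Chebyshev recursion to identify $\mathcal N_k$ with $[(d_1-1)(d_2-1)]^{k/2}p_k(\Lambda)$. The paper simply asserts $A^{(2)}=M^2-d_1(d_2-1)I$ and $A^{(k+1)}=MA^{(k)}-(d_1-1)(d_2-1)A^{(k-1)}$, while you more carefully set out to derive this via the intermediate odd-length matrices $\mathcal R_k$.

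The gap is at your key step: if you actually carry out the elimination, you do \emph{not} obtain the claimed recursion. From your (correct) relations $\mathcal R_k=\mathcal N_kX-(d_2-1)\mathcal R_{k-1}$ and $\mathcal N_{k+1}=\mathcal R_kX^{\top}-(d_1-1)\mathcal N_k$ (valid for $k\ge1$), substituting and using $\mathcal R_{k-1}X^{\top}=\mathcal N_k+(d_1-1)\mathcal N_{k-1}$ gives, for $k\ge2$,
\[
\mathcal N_{k+1}=\bigl(M-(d_2-2)I\bigr)\mathcal N_k-(d_1-1)(d_2-1)\mathcal N_{k-1},
\]
with an extra shift $-(d_2-2)I$; likewise $\mathcal N_2=M^2-(d_2-2)M-d_1(d_2-1)I$. (Sanity check on $K_{3,3}$ with $d_1=d_2=3$: there $(M^2)_{12}=9$ but $\mathcal N_2(1,2)=6$.) Thus $\mathcal N_k$ does \emph{not} satisfy the $W_j$-recursion unless $d_2=2$, and the discrepancy at each step is $(d_2-2)\mathcal N_k$, whose trace is $(d_2-2)\,\textnormal{NBW}_k^{(n)}$ --- precisely the quantity you are after, not a ``$\textnormal{tr}\,M$-type'' term that vanishes. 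Your hedge in the Conclusion does not cover this. In fact the paper's own proof makes the same slip (it asserts the unshifted recursion directly), and one can check on $K_{3,3}$ for $k=3$ that $\sum_i p_3(\lambda_i)=81/4$ while $[(d_1-1)(d_2-1)]^{-3/2}\,\textnormal{NBW}_3=9$, so the identity \eqref{eq:plambda} as stated appears to fail whenever $d_2\ge3$; the combinatorics point instead to the shifted matrix $(XX^\top-(d_1+d_2-2)I)/\sqrt{(d_1-1)(d_2-1)}$ as the natural argument.
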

\begin{proof}
Let $A^{(k)}$ be the $n\times n$ matrix such that $A_{ij}^{(k)}$ is the number of non-backtracking walks of length $2k$ from $i$ to $j$, where $i,j\in V_1$. We have the following relations:
\begin{align}
    A^{(1)} &=XX^{\top}-d_1I, \quad A^{(2)}=(A^{(1)})^2-d_1(d_2-1)I, \notag\\
    A^{(k+1)}&=A^{(1)} A^{(k)}-(d_1-1)(d_2-1)A^{(k-1)}, \quad \forall k\geq 2.\label{eq:Ak2}
\end{align}

The expressions of $A^{(1)}$ and $A^{(2)}$ follow from the definition of non-backtracking walks. Since a non-backtracking walk of length $2k+2$ can be decomposed as a non-backtracking walk of length $2k$ and a non-backtracking walk of length $2$ which avoid backtracking at the $2k$-th step, the expression \eqref{eq:Ak2} holds. 
We now claim that for $k\geq 1$,
\begin{align}\label{eq:pkk}
    p_k\left(\frac{XX^{\top}-d_1I}{\sqrt{(d_1-1)(d_2-1)}}\right)=[(d_1-1)(d_2-1)]^{-k/2}A^{(k)},
\end{align}
and prove it by induction. Note that from \eqref{eq:pk},
\[p_1(x)=x,\quad  p_2(x)=x^2-1-\frac{1}{d_1-1}. \]
It is easy to check \eqref{eq:pkk} holds for $k=1,2$. 
Since $p_k(x)$ is a linear combination of $U_k(x/2)$ and $U_{k-2}(x/2)$, it satisfies the recursive relation for $U_k(x/2)$, which is 
\begin{align*}
    p_k(x)=xp_k(x)-p_{k-1}(x).
\end{align*}

Assume \eqref{eq:pkk} holds for $k\leq s$. Let $M=XX^{\top}-d_1I$. Then
\begin{align*}
    p_{s+1}\left(\frac{M}{\sqrt{(d_1-1)(d_2-1)}}\right)
    =&{M}[(d_1-1)(d_2-1)]^{-(s+1)/2}A^{(s)}-[(d_1-1)(d_2-1)]^{-(s-1)/2}A^{(s-1)}\\
    =&[(d_1-1)(d_2-1)]^{-(s+1)/2}\left(MA^{(s)}-(d_1-1)(d_2-1)A^{(s-1)} \right)\\
    =&[(d_1-1)(d_2-1)]^{-(s+1)/2} A^{(s+1)},
\end{align*}
where the last equality is from \eqref{eq:Ak2}. Therefore \eqref{eq:pkk} holds. Taking  trace on both sides in \eqref{eq:pkk}, we obtain \eqref{eq:plambda}.
\end{proof}

The next theorem  is an algebraic relation  between $\Gamma_k$ and the number of cyclic non-backtracking walks. Together with Lemma \ref{cor:dTVCNBW}, it implies the  polynomials $\Gamma_k(x)$ of the eigenvalues for RBBGs converges in distribution to a sum of Poisson random variables.
\begin{theorem}\label{thm:chebyshevCNBW}
Let $G$ be a $(d_1,d_2)$-biregular bipartite graph and $\lambda_1\geq \cdots \geq \lambda_n$ be the eigenvalues of $\frac{XX^{\top}-d_1I}{\sqrt{(d_1-1)(d_2-1)}}$. Then  for any $k\geq 1$, we have
	\begin{align}\label{eq:formulaCNBW}
\sum_{i=1}^n \Gamma_{k}(\lambda_i)=(d_1-1)^{-k/2}(d_2-1)^{-k/2}	\textnormal{CNBW}_{k}^{(n)}.
\end{align}
\end{theorem}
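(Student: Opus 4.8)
The plan is to deduce the identity from the already established formula $\sum_{i=1}^n p_j(\lambda_i)=(d_1-1)^{-j/2}(d_2-1)^{-j/2}\textnormal{NBW}_j^{(n)}$ by combining it with two auxiliary identities: a purely algebraic relation expressing each $\Gamma_k$ as a linear combination of the $p_j$'s, and a purely combinatorial relation expressing $\CNBW_k^{(n)}$ as the corresponding linear combination of the $\textnormal{NBW}_j^{(n)}$'s. Throughout write $q=(d_1-1)(d_2-1)$ and $\textnormal{NBW}_j=\textnormal{NBW}_j^{(n)}$.

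\textbf{Step 1 (Chebyshev bookkeeping).} From $2T_\ell(y)=U_\ell(y)-U_{\ell-2}(y)$, the second-kind recursion, and the definition $p_\ell(x)=U_\ell(\tfrac x2)-(d_1-1)^{-1}U_{\ell-2}(\tfrac x2)$, one inverts to get $U_\ell(\tfrac x2)=\sum_{i\ge 0}(d_1-1)^{-i}p_{\ell-2i}(x)+(d_1-1)^{-\lceil \ell/2\rceil}U_{\ell\bmod 2}(\tfrac x2)$, and then, treating $\ell$ even and odd separately, obtains
\[
\Gamma_k(x)=p_k(x)-(d_1-2)\sum_{j=1}^{\lfloor (k-1)/2\rfloor}(d_1-1)^{-j}\,p_{k-2j}(x),\qquad k\ge 1.
\]
For $k$ even the constant $\tfrac{d_1-2}{(d_1-1)^{k/2}}$ appearing in $\Gamma_k$ is exactly what absorbs the $U_0$ boundary term; for $k$ odd the sum reaches $p_1(x)=x$, which contributes nothing after summing over eigenvalues since $\sum_i\lambda_i=\textnormal{tr}\big((XX^\top-d_1I)/\sqrt q\big)=0$. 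This step is a finite elementary computation.

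\textbf{Step 2 (peeling off junction backtracks).} The target is the combinatorial identity
\[
\CNBW_k^{(n)}=\textnormal{NBW}_k-(d_1-2)\sum_{j=1}^{\lfloor (k-1)/2\rfloor}(d_2-1)^j\,\textnormal{NBW}_{k-2j},
\]
with the convention $\textnormal{NBW}_j=0$ for $j\le 1$. A closed non-backtracking walk $(u_1,v_1,\dots,u_k,v_k,u_1)$ of length $2k$ starting in $V_1$ fails to be cyclically non-backtracking exactly when $v_k=v_1$ and $u_k=u_2$ (call it \emph{degenerate}), so $\textnormal{NBW}_k-\CNBW_k=\#\{\text{degenerate walks}\}$. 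Deleting the vertex $u_1$ from a degenerate walk — which also deletes one of the two traversals of the edge $u_1v_1$ — leaves a closed non-backtracking walk of length $2(k-1)$ starting in $V_2$ whose first and last $V_1$-vertices both equal $u_2$, and reinserting $u_1$ costs a factor $d_2-1$. Iterating this deletion alternates sides: each time a $V_1$-vertex (resp.\ a $V_2$-vertex) is reinserted there are $(d_1-2)$ (resp.\ $(d_2-2)$) generic choices together with one recursive ``coincidence'' case, and the weights telescope into the stated geometric series in $(d_2-1)$. The collapse uses the fact that the number of closed non-backtracking walks of length $2m$ starting in $V_1$ with $v_m\neq v_1$ equals the number starting in $V_2$ with $u_m\neq u_1$: both count fully cyclically non-backtracking closed walks of length $2m$ carrying a marked starting directed edge on a fixed side, and rotating such a walk by one step interchanges the two sides.

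\textbf{Step 3 (assembling).} Apply the $\textnormal{NBW}$ formula to each $p_{k-2j}$ in Step 1 and multiply through by $q^{k/2}$; since $(d_1-1)^{-j}q^{j}=(d_2-1)^j$ this gives $q^{k/2}\sum_i\Gamma_k(\lambda_i)=\textnormal{NBW}_k-(d_1-2)\sum_{j}(d_2-1)^j\textnormal{NBW}_{k-2j}$, which by Step 2 equals $\CNBW_k^{(n)}$; dividing by $q^{k/2}$ yields \eqref{eq:formulaCNBW}. I expect the main obstacle to be Step 2 — verifying that the ``coincidence'' contributions generated on the $V_2$-side fold back into the $V_1$-side geometric series, which is where the $V_1\leftrightarrow V_2$ symmetry of fully cyclically non-backtracking walks is used and where the parity bookkeeping must be done with care.
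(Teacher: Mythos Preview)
Your three-step plan is exactly the paper's strategy: the algebraic identity in your Step~1 is the paper's \eqref{eq:gamma2k}--\eqref{eq:gamma2k1} (quoted from \cite{dumitriu2013functional}); the combinatorial identity you target in Step~2 is the paper's \eqref{eq:keven}--\eqref{eq:barrecursion}; and your Step~3 is the paper's closing computation.

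The only difference is how Step~2 is argued, and here the paper's route is simpler than yours. The paper never peels one vertex at a time. Instead it observes that a closed non-backtracking walk of length $2k$ which fails to be cyclically non-backtracking carries a full ``tail'' of two edges at each end, and that removing this tail leaves a closed non-backtracking walk of length $2(k-2)$ \emph{still starting in $V_1$}. Counting the number of ways to reattach the tail --- $(d_1-2)(d_2-1)$ if the shorter walk is itself cyclically non-backtracking, $(d_1-1)(d_2-1)$ otherwise --- gives directly the two-term recursion
\[
\CNBW_k-(d_2-1)\CNBW_{k-2}=\textnormal{NBW}_k-(d_1-1)(d_2-1)\textnormal{NBW}_{k-2},
\]
which telescopes immediately to the target identity. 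Your single-vertex peeling instead routes through $V_2$-starting closed walks and then needs the rotational symmetry of CNBWs to return to the $V_1$ side; the ``coincidence'' folding you correctly flag as the main obstacle is precisely what the paper sidesteps by peeling two layers at once and never changing the starting side. So the overall scheme matches, but the paper's execution of Step~2 avoids the side-switching bookkeeping entirely.
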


\begin{proof}

We first relate the number of cyclically non-backtracking closed walks $\CNBW_k^{(n)}$ to the number of  closed non-backtracking walks  $\textnormal{NBW}_k^{(n)}$.

A closed non-backtracking walk of length $2k$ is either cyclically non-backtracking, or it can be obtained from a closed non-backtracking walk of length $2(k-2)$ by adding a new walk of length $2$ (which we call a \textit{tail}) to the beginning of the walk and its reverse to the end (see Figure \ref{fig:NBW} for an example). For any cyclically non-backtracking walk of length $2(k-2)$, we can add a tail in $(d_1-2)(d_2-1)$ many ways. For any closed non-backtracking walk of length $2(k-2)$ that is not cyclically non-backtracking, we can add a tail in $(d_1-1)(d_2-1)$ many ways. 
Therefore for $k\geq 3$, we have the following equation
\begin{align*}
\text{NBW}_k^{(n)}&=\text{CNBW}_k^{(n)}	+(d_1-2)(d_2-1)\text{CNBW}_{k-2}^{(n)}+(d_1-1)(d_2-1)(\text{NBW}_{k-2}^{(n)}-\text{CNBW}_{k-2}^{(n)})\\
&=\text{CNBW}_k^{(n)}+(d_1-1)(d_2-1)\textnormal{NBW}_{k-2}^{(n)}-(d_2-1)\text{CNBW}_{k-2}^{(n)},
\end{align*}
which can be written as 
\begin{align}\label{eq:recursive}
\text{CNBW}_{k}^{(n)}-(d_2-1)\text{CNBW}_{k-2}^{(n)}=\text{NBW}_k^{(n)}-(d_1-1)(d_2-1)\text{NBW}_{k-2}^{(n)}.	
\end{align}

Note that $\textnormal{CNBW}_k^{(n)}=\textnormal{NBW}_k^{(n)}$ for $k=1,2$. Applying \eqref{eq:recursive} recursively,  we have when $k$ is even,
\begin{align} \label{eq:keven}
&\text{CNBW}_{k}^{(n)}\\
=&\text{NBW}_k^{(n)}-(d_1-2) [(d_2-1)\text{NBW}_{k-2}^{(n)}	+(d_2-1)^2\text{NBW}_{k-4}^{(n)}	+\cdots +(d_2-1)^{\frac{k-2}{2}}\text{NBW}_{2}^{(n)})].\notag 
\end{align}
And when $k$ is odd,
\begin{align}\label{eq:kodd}
&\text{CNBW}_{k}^{(n)} \\
=&\text{NBW}_k^{(n)}-(d_1-2)[(d_2-1)\text{NBW}_{k-2}^{(n)}	+(d_2-1)^2\text{NBW}_{k-4}^{(n)}	+\cdots +(d_2-1)^{\frac{k-3}{2}}\text{NBW}_{3}^{(n)}	].\notag 
\end{align}

Denote 
\begin{align*}
   \overline{\text{NBW}}_k^{(n)}:=&(d_2-1)^{-k/2} \text{NBW}_{k}^{(n)}  , \quad  \overline{\text{CNBW}}_k^{(n)}:=(d_2-1)^{-k/2} \text{CNBW}_{k}^{(n)}. 
\end{align*}
 We can simplify the above equations \eqref{eq:keven} and \eqref{eq:kodd} as
\begin{align}\label{eq:barrecursion}
\overline{\text{CNBW}}_{k}^{(n)}=\overline{\text{NBW}}_k^{(n)}-(d_1-2)\left( \overline{\text{NBW}}_{k-2}^{(n)}+\overline{\text{NBW}}_{k-4}^{(n)}+\cdots+\overline{\text{NBW}}_a^{(n)}\right),
\end{align}
where $a=2$ if $k$ is even and $a=1$ if $k$ is odd. Also \eqref{eq:plambda} can be written as 
\begin{align}\label{eq:pklambda}
	\sum_{i=1}^n p_k(\lambda_i)=(d_1-1)^{-k/2}\overline{\text{NBW}}_{k}^{(n)}.
\end{align}

From the proof of  Proposition 32 in \cite{dumitriu2013functional}, we have the following relation between $\Gamma_k(x)$ and $p_k(x)$ for $k\geq 1$:
\begin{align}
\Gamma_{2k}(x)&=p_{2k}(x)-(d_1-2)\left( \frac{p_{2k-2}(x)}{d_1-1}+\frac{p_{2k-4}(x)}{(d_1-1)^2}+\cdots+\frac{p_{2}(x)}{(d_1-1)^{k-1}}\right), \label{eq:gamma2k}\\
\Gamma_{2k-1}(x)&=p_{2k-1}(x)-(d_1-2)\left( \frac{p_{2k-3}(x)}{d_1-1}+\frac{p_{2k-5}(x)}{(d_1-1)^2}+\cdots+\frac{p_{1}(x)}{(d_1-1)^{k-1}}\right).\label{eq:gamma2k1}
\end{align}
Then from \eqref{eq:pklambda} and \eqref{eq:barrecursion},
\begin{align*}
 (d_1-1)^{-k} \overline{\text{CNBW}}_{2k}^{(n)}&=\sum_{i=1}^n\left(  p_{2k}(\lambda_i)-(d_1-2) \left( \frac{p_{2k-2}(\lambda_i)}{d_1-1}+\cdots+\frac{p_{2}(\lambda_i)}{(d_1-1)^{k-1}}\right)\right)=\sum_{i=1}^n \Gamma_{2k}(\lambda_i),
\end{align*}
where the last equality is from \eqref{eq:gamma2k}. Similarly, from \eqref{eq:gamma2k1},
\begin{align*}
    (d_1-1)^{(2k-1)/2}\overline{\text{CNBW}}_{2k-1}^{(n)}=\sum_{i=1}^n \Gamma_{2k-1}(\lambda_i).
\end{align*}

Therefore for all $k\geq 1$,
\begin{align*}
\sum_{i=1}^n \Gamma_{k}(\lambda_i)=(d_1-1)^{-k/2}	\overline{\text{CNBW}}_{k}^{(n)}=[(d_1-1)(d_2-1)]^{-k/2}	\text{CNBW}_{k}^{(n)}.
\end{align*}
This completes the proof of Theorem \ref{thm:chebyshevCNBW}.
\end{proof}

\section{Spectral gap}\label{sec:spectralgap}

In this section, we  provide some estimates on the second largest eigenvalue of the random biregular bipartite graphs that will be used to study eigenvalue fluctuations in Section \ref{sec:CLT}.
Note that the largest eigenvalue of $XX^{\top}-d_1I$ is $\lambda_1=d_1(d_2-1)$. In the next theorem, we provide upper bounds on $|\lambda|$ for all eigenvalues $\lambda\not=\lambda_1$.
\begin{theorem}\label{thm:spectralgap}
Let	$G$ be a $(d_1,d_2)$-random biregular bipartite graph with $d_1\geq d_2$. Let $\lambda_1\geq \cdots\geq \lambda_n$ be the eigenvalues of $XX^{\top}-d_1I$. 
\begin{enumerate}
	\item For fixed $d_1,d_2$, there exists a sequence $\epsilon_n\to 0$ such that for any eigenvalue $\lambda \not=\lambda_1$,
	 \begin{align}
	     \mathbb P( |\lambda-(d_2-2)|\geq 2\sqrt{(d_1-1)(d_2-1)}+\epsilon_n)\to 0 \label{eq:gapspectral1}
	 \end{align} 
	as $n\to\infty$.
	\item Suppose $d_2\leq \frac{1}{2}n^{2/3}$, $d_1\geq d_2\geq cd_1$ for some constant $c\in (0,1)$. Then for some constant $\alpha_1>0$ depending on $c$ and any eigenvalue $\lambda \not=\lambda_1$,
	\begin{align}\label{eq:E1}
	\mathbb P\left( |\lambda |\geq \alpha_1\sqrt{(d_1-1)(d_2-1)}\right)\leq  \frac{1}{n^2}.    
	\end{align} 
\item Suppose $d_2\leq C_1$, $d_1\leq n^2$, there exists a constant $\alpha_2$ depending on $C_1$ such that for any eigenvalue $\lambda \not=\lambda_1$,
	\begin{align}\label{eq:E2}
\mathbb P\left( |\lambda|\geq \alpha_2\sqrt{(d_1-1)(d_2-1)}\right)\leq  \frac{1}{n^2}.	    
	\end{align} 
\end{enumerate}
\end{theorem}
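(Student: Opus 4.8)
\emph{Proof plan.} The statement is, in essence, a translation of spectral--gap results already in the literature, carried out through the map $x\mapsto x^{2}-d_{1}$. Let $\mu_{1}\ge\mu_{2}\ge\cdots\ge\mu_{n}\ge 0$ denote the singular values of $X$; by the block structure \eqref{eq:A}, the eigenvalues of $A$ are $\{\pm\mu_{i}\}_{i=1}^{n}$ together with $m-n$ additional zeros, and $\mu_{1}=\sqrt{d_{1}d_{2}}$. Hence the eigenvalues of $XX^{\top}-d_{1}I$ are exactly $\mu_{i}^{2}-d_{1}$, $1\le i\le n$; since $\lambda_{1}=d_{1}(d_{2}-1)=\mu_{1}^{2}-d_{1}$, the eigenvalues $\lambda\neq\lambda_{1}$ are precisely the squares of the \emph{nontrivial} singular values of $X$, shifted by $-d_{1}$. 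Using the elementary identity
\[
\bigl(\sqrt{d_{1}-1}\pm\sqrt{d_{2}-1}\bigr)^{2}-d_{1}=(d_{2}-2)\pm 2\sqrt{(d_{1}-1)(d_{2}-1)},
\]
a two--sided bound $|\lambda_{j}-(d_{2}-2)|\le 2\sqrt{(d_{1}-1)(d_{2}-1)}+\delta$ is equivalent, up to adjusting $\delta$, to the bound $\sqrt{d_{1}-1}-\sqrt{d_{2}-1}-\delta'\le\mu_{j}\le\sqrt{d_{1}-1}+\sqrt{d_{2}-1}+\delta'$ on the nontrivial singular values; this also explains the centering $d_{2}-2$ and radius $2\sqrt{(d_{1}-1)(d_{2}-1)}$ in \eqref{eq:gapspectral1}.

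For part (1), with $d_{1},d_{2}$ fixed, the plan is to invoke the Friedman--type theorem of \cite{brito2018spectral}: for every $\epsilon>0$, with probability $1-o(1)$ every nontrivial singular value of $X$ lies in $[\sqrt{d_{1}-1}-\sqrt{d_{2}-1}-\epsilon,\ \sqrt{d_{1}-1}+\sqrt{d_{2}-1}+\epsilon]$, the lower inequality being the statement that no nontrivial eigenvalue of $A$ falls in the spectral gap of the Godsil--Mohar limiting measure \cite{godsil1988walk,mizuno2003semicircle} (it is vacuous when $d_{1}=d_{2}$, in which case the matching lower bound $\mu_{j}^{2}-d_{1}\ge -d_{1}=(d_{2}-2)-2\sqrt{(d_{1}-1)(d_{2}-1)}$ is immediate). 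Squaring, subtracting $d_{1}$, and letting $\epsilon=\epsilon_{n}\to 0$ slowly then yields \eqref{eq:gapspectral1}.

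For parts (2) and (3) I would apply the second--eigenvalue bounds of \cite{zhu2020second}, which hold in the stated ranges ($d_{2}\le\tfrac12 n^{2/3}$ in (2); $d_{2}\le C_{1}$, $d_{1}\le n^{2}$ in (3)) and can be taken with failure probability $\le n^{-2}$ after adjusting absolute constants. In the bounded--aspect--ratio case, $d_{2}\ge cd_{1}$ gives $d_{1}\le(1+o(1))c^{-1/2}\sqrt{(d_{1}-1)(d_{2}-1)}$, so the lower confinement $\mu_{j}^{2}-d_{1}\ge -d_{1}\ge-\alpha_{1}\sqrt{(d_{1}-1)(d_{2}-1)}$ is automatic for $\alpha_{1}$ large, and only the upper bound $\mu_{j}\le C(c)\sqrt{d_{1}}$ is needed; squaring gives $\mu_{j}^{2}-d_{1}\le (C(c)^{2}-1)d_{1}\le\alpha_{1}\sqrt{(d_{1}-1)(d_{2}-1)}$, which is \eqref{eq:E1}. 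In case (3), $d_{2}\le C_{1}$ makes $\sqrt{(d_{1}-1)(d_{2}-1)}$ comparable to $\sqrt{d_{1}}$ up to a constant depending on $C_{1}$, and the two--sided estimate $\mu_{j}=\sqrt{d_{1}}+O_{C_{1}}(1)$ from \cite{zhu2020second} squares to $\mu_{j}^{2}-d_{1}=O_{C_{1}}(\sqrt{d_{1}})$, i.e. $|\lambda_{j}|\le\alpha_{2}\sqrt{(d_{1}-1)(d_{2}-1)}$, which is \eqref{eq:E2}; when $d_{2}=2$ this two--sided estimate reduces to the classical top/bottom-of-spectrum bound for the $d_{1}$-regular multigraph $H$ underlying $X$, via $XX^{\top}=d_{1}I+A_{H}$.

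The one genuinely delicate point — the one I would check most carefully — is the availability of the \emph{lower} edge $\mu_{j}\ge\sqrt{d_{1}-1}-\sqrt{d_{2}-1}-o(1)$, i.e.\ $\mu_{j}^{2}-d_{1}\ge(d_{2}-2)-2\sqrt{(d_{1}-1)(d_{2}-1)}-o(1)$, and not merely the upper Ramanujan-type bound: it is unnecessary in regime (2), as just explained, but essential in regimes (1) and (3), so one must confirm that \cite{brito2018spectral}, respectively \cite{zhu2020second}, provides it (or that it follows from their methods together with the Godsil--Mohar law). Everything else is bookkeeping: matching the $\epsilon$-errors and the polynomial failure probabilities of the cited theorems to the forms $\epsilon_{n}\to 0$ and $n^{-2}$ claimed here.
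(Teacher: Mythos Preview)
Your proposal is correct and follows essentially the same route as the paper: cite \cite{brito2018spectral} for part~(1) and \cite{zhu2020second} for parts~(2)--(3), then transfer the bounds on the nontrivial eigenvalues of $A$ to those of $XX^{\top}-d_{1}I$ via $\lambda=\mu^{2}-d_{1}$, using the aspect-ratio hypothesis in (2) to pass from $d_{1}$ to $\sqrt{(d_{1}-1)(d_{2}-1)}$. The only cosmetic difference is that for part~(3) the paper invokes Theorem~1.5 of \cite{zhu2020second} already stated in the form $\mathbb P\bigl(\max_{2\le i\le m+n-1}|\lambda_i^{2}(A)-d_1|\ge\alpha_2\sqrt{(d_1-1)(d_2-1)}\bigr)\le n^{-2}$, so no squaring step is needed; and your concern about the lower edge is resolved exactly as you anticipated, since \cite{brito2018spectral} supplies both $\lambda_2(A)\le\sqrt{d_1-1}+\sqrt{d_2-1}+o(1)$ and $\lambda_{\min}^{+}(A)\ge\sqrt{d_1-1}-\sqrt{d_2-1}-o(1)$.
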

\begin{remark}
The probability estimates in \eqref{eq:E1} and \eqref{eq:E2} can be improved, see \cite{zhu2020second}. In order to prove the main theorems in Section \ref{sec:CLT}, we only include a weaker version for simplicity.
\end{remark}

\begin{proof}[proof of Theorem \ref{thm:spectralgap}]
Theorem 4 in \cite{brito2018spectral} states that for a random biregular bipartite graph with $d_1\geq d_2$, the eigenvalues of the adjacency matrix $A$ satisfy the following estimates with high probability:
\begin{enumerate}
    \item the second eigenvalue of $A$ satisfies $\lambda_2(A)\leq \sqrt{d_1-1}+\sqrt{d_2-1}+o(1)$,
    \item the smallest positive eigenvalue of $A$ satisfies $\lambda_{\min}^+(A)\geq \sqrt{d_1-1}-\sqrt{d_2-1}-o(1).$
\end{enumerate}
Since eigenvalues of $XX^{\top}$ are the squares of the eigenvalues for $A$, we have with high probability,
\begin{align*}
    \lambda_2(XX^{\top})-d_1-(d_2-2) &\leq 2\sqrt{(d_1-1)(d_2-1)}+o(1),\\
    \lambda_{n}(XX^{\top})-d_1-(d_2-2) &\geq -2\sqrt{(d_1-1)(d_2-1)}-o(1),
\end{align*}
therefore
 \eqref{eq:gapspectral1} holds.

Theorem 1.1 in \cite{zhu2020second}
states that if $d_2\leq \frac{1}{2}{n^{2/3}}$ and $d_1\geq d_2$, there exists a constant $\alpha>0$ such that $\lambda_2(A)\leq \alpha\sqrt{d_1}$ with probability at least $1-m^{-2}$.  This implies for any eigenvalue $\lambda$ of $XX^{\top}-d_1I$ with $\lambda\not=d_1(d_2-1)$, we have 
\begin{align*}
   \mathbb P\left( -d_1\leq \lambda\leq \alpha^2 d_1-d_1\right)\geq 1-m^{-2}\geq 1-n^{-2}.
\end{align*}
Since $d_1\geq d_2\geq cd_1$, we can find a constant $\alpha_1>0$ depending on $\alpha$ and $c$ such that 
\begin{align*}
   \mathbb P\left( |\lambda|\leq \alpha_1\sqrt{(d_1-1)(d_2-1)} \right) \geq 1-n^{-2}.
\end{align*}
Therefore \eqref{eq:E1} holds. Theorem 1.5 in \cite{zhu2020second} states that if $d_2\leq C_1, d_1\leq n^2$, there exists a constant $\alpha_2$ depending on $C_1$ such that
\begin{align*}
    \mathbb P \left(\max_{2\leq i\leq m+n-1}|\lambda_i^2(A)-d_1|\geq \alpha_2 \sqrt{(d_1-1)(d_2-1)}  \right)\leq n^{-2}.
\end{align*}
Then \eqref{eq:E2} follows from the algebraic relation between the spectra of $A$ and $XX^{\top}-d_1I$.
\end{proof}

\section{Eigenvalue fluctuations}\label{sec:CLT}

Lemma \ref{thm:chebyshevCNBW} and Corollary \ref{cor:dTVCNBW} in Section \ref{sec:chebypoly} imply the limiting laws for $\sum_{i=1}^n \Gamma_k(\lambda_i)$ are given by a sum of Poisson random variables. In this section, we extend the results to a more general class of function $f$ and study the behavior of $\sum_{i=1}^n f(\lambda_i)$ for RBBGs with fixed and growing degrees.

The following set-up for weak convergence will be used in Section \ref{sec:gaussian} to prove Theorem \ref{thm:GaussianCLT}. We will closely follow the definitions and notations used in \cite{dumitriu2013functional}. See Section 2 in \cite{dumitriu2013functional} for more details.

Denote $\mathbb N:=\{1,2,\dots\}$.
Let $\vec{w}=(w_m)_{m\in\mathbb N}$ be a sequence of positive weights. Let $L^2(\vec{w})$ be the space of sequences $(x_m)_{m\in\mathbb N}$ that are square-integrable with respect to $\vec{w}$, i.e., $\sum_{m=1}^{\infty} x_m^2w_m<\infty$. We define a complete separable metric space $\mathcal X=(L^2(\vec w), \| \cdot \|)$, where for any  sequence $(x_m)_{m\in \mathbb N}$, \[\|x\|=\left(\sum_{m=1}^{\infty} x_m^2w_m\right)^{1/2}.\]

Denote the space of probability measures on the Borel $\sigma$-algebra of $\mathcal X$ by $\mathcal P(\mathcal X)$. We use the Prokhorov metric for weak convergence as the metric on $\mathcal P(\mathcal X)$. The following results are proved in Section 2 of \cite{dumitriu2013functional}.
\begin{prop}[Lemma 2-4 in \cite{dumitriu2013functional}] \label{prop:weakconvergence} The following holds for the complete separable metric space $\mathcal X$.
    \begin{enumerate}
    \item Let $(a_m)_{m\in\mathbb N}\in L^2(\vec{w})$ be such that $a_m\geq 0$ for every $m$. Then the set
    \[ \{ (b_m)_{m\in\mathbb N}\in L^2(\vec{w}): 0\leq |b_m|\leq a_m, \forall m\in \mathbb N\}\]
    is compact in $(L^2(\vec{w}), \|\cdot \|)$.
        \item Suppose $\{X_n\}$ and $X$ are random sequences taking values in $L^2(\vec{w})$ such that $X_n$ converges in distribution to $X$. Then for any $b\in L^2(\vec{w})$, the random variables $\langle b, X_n\rangle $ converges in distribution to $\langle b, X\rangle$.
        \item Let $x\in \mathcal X$ and $P,Q$ be two probability measures in $\mathcal P(\mathcal X)$. Suppose for any finite collection of indices $(i_1,\dots,i_k)$, the law of random vector $(x_{i_1},\dots, x_{i_k})$ is the same under both $P$ and $Q$. Then $P=Q$ on the entire Borel $\sigma$-algebra of $\mathcal X$.
    \end{enumerate}
\end{prop}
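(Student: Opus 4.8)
The plan is to prove the three items separately; each is a standard fact about the weighted sequence Hilbert space $\mathcal X = L^2(\vec w)$, so I would simply record the arguments, the only one needing genuine work being the compactness statement (1).

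\textbf{Part (1).} Set $K = \{(b_m)_{m\in\mathbb N}\in L^2(\vec w): 0\le |b_m|\le a_m \text{ for all } m\}$. Since $\mathcal X$ is metrizable it suffices to show $K$ is sequentially compact. Each coordinate projection $\pi_m\colon \mathcal X\to\mathbb R$ satisfies $|x_m-y_m|\sqrt{w_m}\le \|x-y\|$, hence is continuous; in particular $K$ is closed. Given $(b^{(j)})_{j\ge 1}\subset K$, each coordinate sequence lies in the compact interval $[-a_m,a_m]$, so a diagonal extraction yields a subsequence along which $b^{(j)}_m\to c_m$ for every $m$; the limit obeys $|c_m|\le a_m$, so $\sum_m c_m^2 w_m\le \sum_m a_m^2 w_m<\infty$ and $c\in K$. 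To promote coordinatewise convergence to norm convergence, fix $\epsilon>0$ and pick $M$ with $\sum_{m>M}a_m^2 w_m<\epsilon$ (the tail of a convergent series). Then $\sum_{m>M}|b^{(j)}_m-c_m|^2 w_m\le 4\sum_{m>M}a_m^2 w_m<4\epsilon$ uniformly in $j$, while the finite head $\sum_{m\le M}|b^{(j)}_m-c_m|^2 w_m\to 0$ as $j\to\infty$; letting first $j\to\infty$ and then $\epsilon\to 0$ gives $\|b^{(j)}-c\|\to 0$. The delicate point, as always in such Hilbert-cube arguments, is the separate treatment of the uniformly small tail and the coordinatewise-convergent head; this is the main obstacle and everything else is routine.

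\textbf{Part (2).} The functional $x\mapsto \langle b,x\rangle = \sum_m b_m x_m w_m$ is bounded linear on $\mathcal X$, since $|\langle b,x\rangle|\le \|b\|\,\|x\|$ by Cauchy--Schwarz, hence continuous. As $X_n\to X$ in distribution on $\mathcal X$, the continuous mapping theorem immediately gives $\langle b,X_n\rangle\to\langle b,X\rangle$ in distribution.

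\textbf{Part (3).} First I would identify the Borel $\sigma$-algebra $\mathcal B(\mathcal X)$ with $\sigma(\pi_m:m\in\mathbb N)$, the $\sigma$-algebra generated by the coordinate maps. For fixed $y\in\mathcal X$, the map $x\mapsto \|x-y\|^2=\lim_{M\to\infty}\sum_{m\le M}(x_m-y_m)^2 w_m$ is a pointwise limit of $\sigma(\pi_1,\dots,\pi_M)$-measurable functions, hence is $\sigma(\pi_m)$-measurable; thus every open ball lies in $\sigma(\pi_m)$, and since $\mathcal X$ is separable every open set is a countable union of such balls, giving $\mathcal B(\mathcal X)\subseteq\sigma(\pi_m)$ (the reverse inclusion being clear from continuity of each $\pi_m$). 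The finite-dimensional cylinder sets $\{x:(x_{i_1},\dots,x_{i_k})\in B\}$ with $B\in\mathcal B(\mathbb R^k)$ form a $\pi$-system that contains $\mathcal X$ and generates $\mathcal B(\mathcal X)$, and $P,Q$ agree on it by hypothesis; Dynkin's $\pi$-$\lambda$ theorem then forces $P=Q$ on all of $\mathcal B(\mathcal X)$.
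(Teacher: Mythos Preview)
Your proof is correct; the three arguments are exactly the standard ones for the weighted $\ell^2$ setting, and no step is missing or unjustified. Note, however, that the paper does not supply a proof of this proposition at all: it is stated as Lemmas~2--4 of \cite{dumitriu2013functional} and simply quoted, so there is nothing in the paper to compare your argument against beyond the bare statement.
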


We  also need the following results from the approximation theory.
\begin{definition}[Bernstein ellipse]
For $\rho>1$, let $E_{\rho}$ be the image of the map $z\mapsto (z+z^{-1})/2$ of the open disc of radius $\rho$ in the complex plain centered at the origin. We can $E_{\rho}$ the \textit{Bernstein ellipse} of radius $\rho$.
The ellipse has foci at $\pm 1$, and the sum of the major semi-axis and minor semi-axis is exactly $\rho$.
\end{definition}

\begin{prop}[\cite{trefethen2013approximation}, Theorem 8.1]\label{prop:approx} 
    Suppose $f: [-1,1]\to \mathbb R$ can be analytically extended to $E_{\rho}$ and is bounded by $M$ on $E_{\rho}$. Then $f$ has a unique expansion on $[-1,1]$ as
    \[ f(x)=\sum_{k=0}^{\infty} a_kT_k(x),\]
    where $T_k(x)$ is the Chebyshev polynomial of the first kind defined in  \eqref{eq:cheby1},
    and the coefficients of this expansion satisfy
    \[ |a_0|\leq M, \quad |a_k|\leq \frac{2M}{\rho^k}.\]
\end{prop}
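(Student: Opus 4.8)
The plan is to pass to the unit‑circle picture via the Joukowski substitution $x = (z+z^{-1})/2$, under which the statement becomes the elementary Cauchy estimate for the Laurent coefficients of a function analytic on an annulus. First I would set
\[
g(z) := f\!\left(\frac{z+z^{-1}}{2}\right).
\]
The map $z \mapsto (z+z^{-1})/2$ carries each circle $|z| = r$ with $r \ge 1$ onto the Bernstein ellipse $E_r$, these ellipses are nested with $E_r \subseteq E_\rho$ for $1 \le r \le \rho$, and it carries the annulus $\{\rho^{-1} < |z| < \rho\}$ onto the open region $E_\rho$ (which contains $[-1,1]$). Consequently $g$ is analytic on $\{\rho^{-1} < |z| < \rho\}$, satisfies $|g(z)| \le M$ for $1 < |z| < \rho$, and enjoys the symmetry $g(z) = g(1/z)$ because $z + z^{-1}$ is invariant under $z \mapsto 1/z$.

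Next I would expand $g$ in its Laurent series $g(z) = \sum_{k\in\mathbb{Z}} c_k z^k$ on that annulus; the symmetry forces $c_{-k} = c_k$ for all $k$. Combining this with the identity $T_k\big((z+z^{-1})/2\big) = (z^k + z^{-k})/2$, which is immediate by induction from the recursion \eqref{eq:cheby1}, and regrouping the $k$ and $-k$ terms yields
\[
g(z) = c_0 + \sum_{k=1}^{\infty} c_k\,(z^k + z^{-k}) = c_0 + \sum_{k=1}^{\infty} 2 c_k\, T_k\!\left(\frac{z+z^{-1}}{2}\right).
\]
Restricting to $z = e^{i\theta}$, so that $(z+z^{-1})/2 = \cos\theta$ sweeps out $[-1,1]$, gives a Chebyshev expansion of $f$ with $a_0 = c_0$ and $a_k = 2 c_k$ for $k \ge 1$; the geometric decay of the $c_k$ established below makes this series converge uniformly in a neighbourhood of $[-1,1]$, so it genuinely represents $f$ there. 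Uniqueness follows either from uniqueness of Laurent coefficients, or directly from orthogonality of $\{T_k\}$ with respect to $(1-x^2)^{-1/2}\,dx$.

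For the bounds, fix $r$ with $1 < r < \rho$ and apply Cauchy's formula for Laurent coefficients, $c_k = \frac{1}{2\pi i}\oint_{|z|=r} g(z)\, z^{-k-1}\,dz$. Since $|g| \le M$ on $|z| = r$, this gives $|c_k| \le M r^{-k}$ for every $k \in \mathbb{Z}$; letting $r \uparrow \rho$ yields $|c_k| \le M \rho^{-k}$, and hence $|a_0| = |c_0| \le M$ and $|a_k| = 2|c_k| \le 2M\rho^{-k}$ for $k \ge 1$, as claimed.

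The only delicate points are geometric rather than analytic: one must check the nesting $E_r \subseteq E_\rho$ for $r \le \rho$, so that the hypothesis $|f| \le M$ on $E_\rho$ transfers to $|g| \le M$ on each intermediate circle, and that $g$ is analytic on a full annulus including $|z| = 1$, which is precisely where one uses that $f$ extends \emph{analytically}, not merely continuously, to $E_\rho$. Everything else is the routine Cauchy–Laurent computation.
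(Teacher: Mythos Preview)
The paper does not prove this proposition; it simply quotes it from Trefethen's book \cite{trefethen2013approximation}, so there is no ``paper's own proof'' to compare against. Your argument is correct and is in fact essentially the same proof Trefethen gives: pass to the Laurent picture via the Joukowski substitution $x=(z+z^{-1})/2$, use the symmetry $g(z)=g(1/z)$ and the identity $T_k\big((z+z^{-1})/2\big)=(z^k+z^{-k})/2$ to identify Laurent coefficients with Chebyshev coefficients, and then read off the decay from the Cauchy estimate on circles $|z|=r$ with $r\uparrow\rho$.
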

Define $f_k(x)=\sum_{i=0}^k a_kT_k(x)$.
Applying the bound $|T_{k}(x)|\leq 1$ when $x\in [-1,1]$  and Proposition \ref{prop:approx}, we obtain for all $x\in [-1,1]$,
\begin{align}\label{eq:fkapprox}
    |f(x)-f_k(x)|\leq \frac{2M}{\rho^k(\rho-1)}.
\end{align}
\subsection{Poisson fluctuations with fixed degrees}
Now fix $d_1$ and $d_2$ as constants. We are ready to extend our results in Section \ref{sec:chebypoly} to a more general class of functions as follows. Note that the following theorem is given for a sequence of RBBGs with growing $n$. For ease of notation, we drop the dependence on $n$ when writing the matrix $X$ and eigenvalues $\lambda_1,\dots,\lambda_n$.

\begin{theorem}\label{thm:CLTfixed}
For fixed $d_1\geq d_2\geq 2$ and $(d_1,d_2)\not=(2,2)$, let $G_n$ be a sequence of  random $(d_1,d_2)$-biregular bipartite graph. Let $\lambda_1\geq \cdots\geq \lambda_n$ be the eigenvalues of $\frac{XX^{\top}-d_1I}{\sqrt{(d_1-1)(d_2-1)}}$. Suppose $f$ is a function such that $f(2z)$ is analytic on $E_{\rho}$, where $\rho=[ (d_1-1)(d_2-1)]^{\alpha}$ for some $\alpha>\frac{7}{2}$. Then $f(x)$ can be expanded on $[-2,2]$ as
\begin{align}\label{eq:expandf1}
    f(x)=\sum_{k=0}^{\infty} a_k\Gamma_k(x),
\end{align}
and the random variable
\begin{align}\label{eq:YFn}
    Y_{f}^{(n)}:=\sum_{i=1}^n f(\lambda_i)-n a_0
\end{align} 
converges in distribution as $n\to\infty$ to the infinitely divisible random variable
\begin{align}\label{eq:yf}
   Y_{f}:=\sum_{k=2}^{\infty} \frac{a_k}{[(d_1-1)(d_2-1)]^{k/2}}\CNBW_k^{(\infty)}, 
\end{align} 
where $\CNBW_k^{(\infty)}$ is defined in \eqref{eq:CNBWinfinity}.
\end{theorem}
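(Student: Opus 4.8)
\emph{Approach.} The plan is to proceed in four steps: (i) expand $f$ in the basis $\{\Gamma_k\}$ with geometrically decaying coefficients; (ii) use the exact identity of Theorem~\ref{thm:chebyshevCNBW} to rewrite $Y_f^{(n)}$ as a finite linear combination of the $\CNBW_k^{(n)}$ plus a controllable tail; (iii) pass to the limit in the truncated combination via Corollary~\ref{cor:dTVCNBW}; (iv) send the truncation level to infinity and verify infinite divisibility.

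\emph{Step (i): the $\Gamma$-expansion.} I would pick $\alpha'\in(7/2,\alpha)$ and set $\rho'=[(d_1-1)(d_2-1)]^{\alpha'}<\rho$. Since $g(z):=f(2z)$ is analytic on the open ellipse $E_\rho\supset\overline{E_{\rho'}}$, it is bounded on $\overline{E_{\rho'}}$, say by $M$, so Proposition~\ref{prop:approx} gives $g(z)=\sum_{k\ge0}b_kT_k(z)$ with $|b_k|\le 2M(\rho')^{-k}$, hence $f(x)=\sum_k b_kT_k(x/2)$ on $[-2,2]$. Feeding in the definitions \eqref{eq:gamma9}--\eqref{eq:gamma99} (that is, $T_0(x/2)=\Gamma_0(x)$, $T_k(x/2)=\tfrac12\Gamma_k(x)$ for odd $k$, $T_{2j}(x/2)=\tfrac12(\Gamma_{2j}(x)-\tfrac{d_1-2}{(d_1-1)^j})$) and rearranging the absolutely convergent series, one obtains $f(x)=\sum_{k\ge0}a_k\Gamma_k(x)$ with $a_k=b_k/2$ for $k\ge1$ and $a_0=b_0-\sum_{j\ge1}\tfrac{(d_1-2)b_{2j}}{2(d_1-1)^j}$; in particular $|a_k|\le M(\rho')^{-k}$ for $k\ge1$. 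Because $|\Gamma_k(x)|\le C\mu^k$ uniformly on any fixed compact subset of $2E_{\rho'}$ for a suitable $\mu<\rho'$, the series $\sum a_k\Gamma_k$ converges uniformly there and, by analytic continuation, equals $f$. Finally, $\alpha'>7/2$ forces $\rho'\ge(d_1-1)^{7/2}>d_1\ge\lambda_1=d_1\sqrt{(d_2-1)/(d_1-1)}$ and $\rho'>3$, so by Theorem~\ref{thm:spectralgap}(1) there is an event $\mathcal E_n$ with $\mathbb P(\mathcal E_n)\to1$ on which \emph{every} eigenvalue, including the deterministic outlier $\lambda_1$, lies in a fixed compact set $K\subset 2E_{\rho'}$.

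\emph{Steps (ii)--(iii).} On $\mathcal E_n$, the bound $|\Gamma_k(\lambda_i)|\le C\mu^k$ on $K$ with $\mu<\rho'$ gives $\sum_{k\ge0}|a_k|\sum_{i=1}^n|\Gamma_k(\lambda_i)|<\infty$, so the sums may be interchanged; since $\sum_i\Gamma_0(\lambda_i)=n$, $\CNBW_1^{(n)}=0$, and Theorem~\ref{thm:chebyshevCNBW} gives $\sum_i\Gamma_k(\lambda_i)=[(d_1-1)(d_2-1)]^{-k/2}\CNBW_k^{(n)}$, we get, on $\mathcal E_n$,
\begin{align*}
Y_f^{(n)}=\sum_{k\ge2}\frac{a_k}{[(d_1-1)(d_2-1)]^{k/2}}\,\CNBW_k^{(n)}=S_r^{(n)}+R_r^{(n)},
\end{align*}
where $S_r^{(n)}:=\sum_{k=2}^{r}a_k[(d_1-1)(d_2-1)]^{-k/2}\CNBW_k^{(n)}$ and $R_r^{(n)}$ is the tail. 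For $R_r^{(n)}$ I would use not the conditional confinement but the \emph{unconditional} first-moment bounds: $\mathbb EC_k^{(n)}\le c_1[(d_1-1)(d_2-1)]^k/(2k)$ (from the proof of Lemma~\ref{lem:B}) and $\mathbb EB_k^{(n)}\le c_7k^7[(d_1-1)(d_2-1)]^k/n$ (Lemma~\ref{lem:EBk}), which give $\mathbb E\CNBW_k^{(n)}\le Ck^7[(d_1-1)(d_2-1)]^k$ and hence $\mathbb E|R_r^{(n)}|\le CM\sum_{k>r}k^7[(d_1-1)(d_2-1)]^{(1/2-\alpha')k}=:\varepsilon_r\to0$ as $r\to\infty$ (using only $\alpha'>1/2$). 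Identically, $Y_f$ of \eqref{eq:yf} is a.s.\ absolutely convergent (same bound with $\mathbb E\CNBW_k^{(\infty)}=\sum_{j\mid k}[(d_1-1)(d_2-1)]^j$), and its truncation $S_r^{(\infty)}:=\sum_{k=2}^r a_k[(d_1-1)(d_2-1)]^{-k/2}\CNBW_k^{(\infty)}$ satisfies $\mathbb E|Y_f-S_r^{(\infty)}|\le\varepsilon_r'\to0$. For each fixed $r$, Corollary~\ref{cor:dTVCNBW} together with \eqref{eq:TVinequality} applied to the map $(c_k)_{k\le r}\mapsto\sum_{k=2}^r a_k[(d_1-1)(d_2-1)]^{-k/2}c_k$ yields $d_{\mathrm{TV}}(S_r^{(n)},S_r^{(\infty)})\le c_8\sqrt r[(d_1-1)(d_2-1)]^{3r/2}/(nd_1)\to0$, so $S_r^{(n)}\xrightarrow{d}S_r^{(\infty)}$ as $n\to\infty$.

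\emph{Step (iv): conclusion.} For any bounded Lipschitz $\phi$, since $Y_f^{(n)}-S_r^{(n)}=R_r^{(n)}$ on $\mathcal E_n$,
\[
\big|\mathbb E\phi(Y_f^{(n)})-\mathbb E\phi(Y_f)\big|\le 2\|\phi\|_\infty\mathbb P(\mathcal E_n^c)+\|\phi\|_{\mathrm{Lip}}(\varepsilon_r+\varepsilon_r')+\big|\mathbb E\phi(S_r^{(n)})-\mathbb E\phi(S_r^{(\infty)})\big|.
\]
Letting $n\to\infty$ and then $r\to\infty$ gives $Y_f^{(n)}\xrightarrow{d}Y_f$. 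For infinite divisibility, reorganize $Y_f=\sum_{k\ge2}\frac{a_k}{[(d_1-1)(d_2-1)]^{k/2}}\sum_{j\mid k}2jC_j^{(\infty)}=\sum_{j\ge2}\gamma_jC_j^{(\infty)}$ with $\gamma_j=2j\sum_{\ell\ge1}a_{j\ell}[(d_1-1)(d_2-1)]^{-j\ell/2}$ and $\sum_j|\gamma_j|\,\mathbb EC_j^{(\infty)}<\infty$ (the $j=1$ term drops since $C_1^{(\infty)}=0$); thus $Y_f$ is an a.s.\ convergent sum of independent scaled Poisson variables, hence infinitely divisible.

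\emph{Main obstacle.} The delicate point is Step~(ii): showing that $\sum_i f(\lambda_i)$ — which a priori involves $f$ at the large deterministic outlier $\lambda_1$ and at $n$ bulk eigenvalues — collapses exactly onto the combinatorial series in the $\CNBW_k^{(n)}$. This requires both that $\lambda_1\in 2E_\rho$, so that the $\Gamma$-series represents $f$ at $\lambda_1$, and absolute convergence of the double sum on the confinement event; both reduce to the growth rate of $\Gamma_k$ at every eigenvalue being strictly below $\rho$, and this is precisely what the lower bound on $\alpha$ buys. The secondary point is that the truncation error $R_r^{(n)}$ must be estimated uniformly in $n$, which forces one to route it through the unconditional expectation bounds of Lemmas~\ref{lem:B} and~\ref{lem:EBk} rather than through eigenvalue confinement.
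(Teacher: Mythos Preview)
Your argument is essentially correct but takes a genuinely different route from the paper. The paper uses a \emph{growing} truncation level $r_n=\lfloor\beta\log n/\log[(d_1-1)(d_2-1)]\rfloor$ and controls the tail $Y_f^{(n)}-X_f^{(n)}=\sum_i(f-f_{r_n})(\lambda_i)$ purely through \emph{function approximation plus spectral confinement}: on the high-probability event that the bulk eigenvalues lie in $[-3,3]$, one has $\sup_{|x|\le 3}|f-f_{r_n}|\le C[(d_1-1)(d_2-1)]^{(-\alpha+2)r_n}$, and the condition $\alpha>7/2$ is exactly what makes $(n-1)\cdot C[(d_1-1)(d_2-1)]^{(-\alpha+2)r_n}\to 0$ compatible with $r_n<\tfrac{2}{3}\log n/\log q$ (needed for Corollary~\ref{cor:dTVCNBW}). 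By contrast, you keep $r$ \emph{fixed}, let $n\to\infty$, then $r\to\infty$, and route the tail through \emph{first-moment bounds on} $\CNBW_k^{(n)}$ rather than spectral confinement. This is a legitimate alternative and in principle uses a weaker analyticity assumption; it also makes the role of the confinement event $\mathcal E_n$ largely cosmetic, since all eigenvalues lie deterministically in $[-\lambda_1,\lambda_1]\subset 2E_{\rho'}$ once $\alpha'>7/2$.

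There is one genuine technical gap to close. The bounds you cite, $\mathbb EC_k^{(n)}\le c_1q^k/(2k)$ and $\mathbb EB_k^{(n)}\le c_7k^7q^k/n$, are proved in Lemmas~\ref{lem:Mckay} and~\ref{lem:EBk} only under the restriction $k\le n^{1/10}$, so the claimed uniform-in-$n$ bound $\mathbb E|R_r^{(n)}|\le\varepsilon_r$ with $\varepsilon_r=CM\sum_{k>r}k^7q^{(1/2-\alpha')k}$ is not justified for the part of the tail with $k>n^{1/10}$. The fix is easy: split $R_r^{(n)}$ at $k=\lfloor n^{1/10}\rfloor$, use your moment bound on the middle range $r<k\le n^{1/10}$ (this gives the $n$-independent $\varepsilon_r$), and on $k>n^{1/10}$ use the crude deterministic count $\CNBW_k^{(n)}\le nd_1(d_2-1)q^{k-1}$, which yields a contribution $\le C'n\,q^{(1/2-\alpha')n^{1/10}}\to 0$ as $n\to\infty$. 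With this split, $\limsup_n\mathbb E|R_r^{(n)}|\le\varepsilon_r$ and your bounded-Lipschitz sandwich goes through.
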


\begin{proof}
Define
\[ f_k(x):=\sum_{i=0}^k a_i\Gamma_i(x).\]
We first show that $f_k(x)$ is a good approximation of $f(x)$. Applying Proposition \ref{prop:approx} to $f(2x)$ gives an expansion \eqref{eq:expandf1} with 
\begin{align}\label{eq:ak}
  |a_k|\leq C[(d_1-1)(d_2-1)]^{-\alpha k}  
\end{align} 
for some constant $C$ that depends only on $d_1,d_2$ and the constant $M$ given in Proposition \ref{prop:approx}.

By the proprieties of Chebyshev polynomials, on any interval $[-K,K]$, we have 
\begin{align}\label{eq:maxT}
    \max_{|x|\leq K}|T_k(x)|=\frac{(K-\sqrt{K^2-1})^k+(K+\sqrt{K^2-1})^k}{2}.
\end{align}
From \eqref{eq:gamma9} and \eqref{eq:gamma99}, we have $\Gamma_1(x)=x$, and for any $k\geq 2$,
\begin{align}\label{eq:gggggk}
   | \Gamma_k(x)|\leq 2 \left| T_k\left(\frac{x}{2}\right)\right|+\frac{d_1-2}{(d_1-1)^{k/2}}\leq 2\left| T_k\left(\frac{x}{2}\right)\right|+1.
\end{align}
From \eqref{eq:maxT},
\begin{align*}
    \max_{|x|\leq 3} 2\left|T_k\left(\frac{x}{2}\right)\right|=\left(\frac{3}{2}-\frac{\sqrt{5}}{2}\right)^k+\left(\frac{3}{2}+\frac{\sqrt{5}}{2}\right)^k,
\end{align*}
Then for all  $k\geq 2$, with \eqref{eq:gggggk} we obtain
\begin{align}\label{eq:3k}
   \sup_{|x|\leq 3}| \Gamma_k(x)|\leq \left(\frac{3}{2}+\frac{\sqrt{5}}{2}\right)^k+2\leq  3^{k+1},
\end{align}
and the same bound holds when $k=1$. From  \eqref{eq:ak} and  \eqref{eq:3k}, for all  $x\in [-3,3]$,
\begin{align*}
    \sum_{k=0}^{\infty}|a_k\Gamma_k(x)|\leq 3C\sum_{k=0}^{\infty} [3((d_1-1)(d_2-1))^{-\alpha}]^{k}<\infty,
\end{align*}
where the last inequality comes from the fact that $(d_1-1)(d_2-1)\geq 2$ and $\alpha>\frac{7}{2}$. Hence the series $\sum_{k=0}^{\infty} a_k\Gamma_k(x)$ is absolutely convergent on $[-3,3]$, which implies the expansion of $f$ in \eqref{eq:expandf1} is valid on $[-3,3]$.
Then we have for a constant $C_1>0$ depending on $C$,
\begin{align}\label{eq:33estimate}
    \sup_{|x|\leq 3}|f(x)-f_k(x)| &\leq  \sup_{|x|\leq 3} \sum_{i=k+1}^{\infty}|a_i\Gamma_i(x)|\leq C_1[3(d_1-1)(d_2-1)^{-\alpha}]^{k+1}. 
\end{align} 
 Denote \[K_1:=\lambda_1=\frac{d_1\sqrt{(d_2-1)}}{\sqrt{d_1-1}}.\] For sufficiently large $k$, from \eqref{eq:maxT} and \eqref{eq:gggggk},
\begin{align*}
     \sup_{|x|\leq K_1}| \Gamma_k(x)|\leq (2K_1)^k. 
\end{align*}
And from \eqref{eq:ak} and the assumption $\alpha>7/2$,
\begin{align*}
   \sum_{k=0}^{\infty}|a_k\Gamma_k(x)|\leq C\sum_{k=0}^{\infty} [2K_1((d_1-1)(d_2-1))^{-\alpha}]^{k}<\infty. 
\end{align*}
It implies the series $\sum_{k=0}^{\infty} a_k\Gamma_k(x)$ is also absolutely convergent on $[-K_1,K_1]$, and the expansion of $f$ in \eqref{eq:expandf1} is  valid on $[-K_1,K_1]$.

Since $2K_1\leq 4[(d_1-1)(d_2-1)]^{1/2}$ and $(d_1-1)(d_2-1)\geq 2$, for a constant $C_2>0$,
\begin{align}\label{eq:unifK}
    \sup_{|x|\leq K_1}|f(x)-f_k(x)|&\leq C_2[2K_1((d_1-1)(d_2-1))^{-\alpha}]^{k+1} \notag\\
    &\leq C_2\left[4((d_1-1)(d_2-1))^{-\alpha+\frac{1}{2}}\right]^{k+1} \leq C_2\left[((d_1-1)(d_2-1))^{-\alpha+\frac{5}{2}}\right]^{k+1}.
\end{align} 
Therefore $f_k$ converges to $f$ uniformly on $[-K_1,K_1]$, and the interval $[-K_1,K_1]$ deterministically contains all eigenvalues of $\frac{XX^{\top}-d_1I}{\sqrt{(d_1-1)(d_2-1)}}$.

By the definition of $\CNBW_k^{(\infty)}$ in \eqref{eq:CNBWinfinity}, Equation \eqref{eq:yf} can be written as 
\[ Y_{f}:=\sum_{j=1}^{\infty}\sum_{i=1}^{\infty} \frac{a_{ij}}{[(d_1-1)(d_2-1)]^{ij/2}}2jC_j^{(\infty)},\]
where $Y_f$ is a sum of independent random variables, and
$\mathbb E|Y_f|^2<\infty$ by \eqref{eq:ak}. 

Denote $\alpha':=\alpha-2>\frac{3}{2}$. 
Choose $\beta$ satisfying $\frac{1}{\alpha'}<\beta<\frac{2}{3}$ and define
\begin{align*}
    r_n &=\left\lfloor\frac{\beta \log n}{\log[(d_1-1)(d_2-1)]} \right\rfloor,\\
    X_f^{(n)}&=\sum_{k=1}^{r_n}\frac{a_k}{[(d_1-1)(d_2-1)]^{k/2}}\CNBW_k^{(n)},\\
     \tilde{Y}_f^{(n)}&=\sum_{k=1}^{r_n} \frac{a_k}{[(d_1-1)(d_2-1)]^{k/2}}\CNBW_k^{(\infty)}.
\end{align*}
Note that $\CNBW_1^{(n)}=0$, from \eqref{eq:formulaCNBW},
\begin{align}\label{eq:XFn}
     X_f^{(n)}&=\sum_{k=2}^{r_n}\frac{a_k}{[(d_1-1)(d_2-1)]^{k/2}}\CNBW_k^{(n)}=\sum_{i=1}^n f_{r_n}(\lambda_i)-na_0.
\end{align}
By Corollary \ref{cor:dTVCNBW}, 

\begin{align*}
	d_{\textnormal{TV}}\left( (\CNBW_k^{(n)},2\leq k\leq r_n),(\CNBW_k^{(\infty)},2\leq k\leq r_n)\right)\leq \frac{c_8\sqrt{r_n}[(d_1-1)(d_2-1)]^{3r_n/2}}{nd_1}=o(1). 
\end{align*}

Since $X_f^{(n)}$ and $\tilde{Y}_f^{(n)}$ are measurable functions of $$(\CNBW_k^{(n)},2\leq k\leq r_n) \quad \text{and} \quad  (\CNBW_k^{(\infty)},2\leq k\leq r_n),$$ respectively, we have
\begin{align*}
    d_{\textnormal{TV}}\left(X_f^{(n)},\tilde{Y}_f^{(n)}\right)\leq d_{\textnormal{TV}}\left( (\CNBW_k^{(n)},2\leq k\leq r_n),(\CNBW_k^{(\infty)},2\leq k\leq r_n)\right)=o(1).
\end{align*}
Note that $\tilde{Y}_f^{(n)}$ converges  almost surely to $Y_f$ by \eqref{eq:ak}, so $X_f^{(n)}$ converges in distribution to $Y_f$.  

By Slutsky's theorem, to show $Y_f^{(n)}$ defined in \eqref{eq:YFn} converges in distribution to $Y_f$, it remains to  show that $Y_f^{(n)}-X_f^{(n)}$ converges to zero in probability.  The largest eigenvalue of $\frac{XX^{\top}-d_1I}{\sqrt{(d_1-1)(d_2-1)}}$ is $K_1$, so from \eqref{eq:unifK} we have \[ \lim_{k\to\infty}f_k(\lambda_1)= f(\lambda_1).\] 
Then for any $\delta>0$ and sufficiently large $n$,
\begin{align}\label{eq:delta2}
  |f(\lambda_1)-f_{r_n}(\lambda_1)|\leq \delta/2.  
\end{align}
 From \eqref{eq:YFn}, \eqref{eq:XFn} and \eqref{eq:delta2}, we  have for sufficiently large $n$,
\begin{align}\label{eq:largen}
    \left|Y_f^{(n)}-X_f^{(n)}\right|&\leq \sum_{i=1}^n |f(\lambda_i)-f_{r_n}(\lambda_i)|\leq \frac{\delta}{2}+\sum_{i=2}^n |f(\lambda_i)-f_{r_n}(\lambda_i)|.
\end{align}
Suppose that all the non-trivial eigenvalues $\lambda\not=\lambda_1$ are contained in $[-3,3]$, from \eqref{eq:33estimate}, 
\begin{align*}
   \sum_{i=2}^n |f(\lambda_i)-f_{r_n}(\lambda_i)|&\leq C_1(n-1)[3(d_1-1)(d_2-1)^{-\alpha}]^{r_n+1}\\
   &\leq C_1n[(d_1-1)(d_2-1)^{-\alpha+2}]^{r_n}\leq C_1n^{1-\alpha'\beta}=o(1),
\end{align*}
which combining \eqref{eq:largen} implies for sufficiently large $n$,
\[ \left|Y_f^{(n)}-X_f^{(n)}\right|\leq \delta.\]

Recall \eqref{eq:gapspectral1} and the assumption $d_1\geq d_2$. With high probability, for a sequence $\epsilon_n\to 0$, we have the nontrivial eigenvalues of $\frac{XX^{\top}-d_1I}{\sqrt{(d_1-1)(d_2-1)}}$ is contained in 
\begin{align*}
    \left[ -2-\epsilon_n+\frac{d_2-2}{\sqrt{(d_1-1)(d_2-1)}},2+\epsilon_n+\frac{d_2-2}{\sqrt{(d_1-1)(d_2-1)}}\right]\subseteq [-3,3]
\end{align*} 
for sufficiently large $n$. Therefore
\begin{align*}
   \mathbb P\left( \left|Y_f^{(n)}-X_f^{(n)}\right|\geq \delta \right)\leq  \mathbb P\left( \max_{2\leq i\leq n}|\lambda_i|\geq 3 \right)=o(1).
\end{align*}
This finishes the proof.
\end{proof}

As a corollary of Theorem \ref{thm:CLTfixed}, we obtain   eigenvalue fluctuations  for the adjacency matrices of RBBGs as follows.
\begin{cor}
For fixed $d_1\geq d_2\geq 2$ and $(d_1,d_2)\not=(2,2)$, let $G_n$ be a sequence of  random $(d_1,d_2)$-biregular bipartite graph. Let $\lambda_1\geq \cdots \geq \lambda_{n+m}$ be the  eigenvalues of its adjacency matrix $A$. Suppose $f$ satisfies the same conditions as in Theorem \ref{thm:CLTfixed}. Then the random variable
\[Y_{f}^{(n)}:=\frac{1}{2}\left[\sum_{i=1}^{n+m} f\left(\frac{\lambda_i^2-d_1}{\sqrt{(d_1-1)(d_2-1)}}\right)-(m-n)f\left(\frac{-d_1}{\sqrt{(d_1-1)(d_2-1)}}\right) \right]-n a_0\]
converges in distribution as $n\to\infty$ to the infinitely divisible random variable
\[ Y_{f}:=\sum_{k=2}^{\infty} \frac{a_k}{[(d_1-1)(d_2-1)]^{k/2}}\CNBW_k^{(\infty)},\]
where $\CNBW_k^{(\infty)}$ is defined in \eqref{eq:CNBWinfinity}.
\end{cor}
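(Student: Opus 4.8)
The plan is to deduce this corollary directly from Theorem~\ref{thm:CLTfixed} by a purely deterministic spectral identity relating the eigenvalues of $A$ to those of $XX^{\top}$; no new probabilistic estimate is required.

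First I would record the spectral correspondence. Since $A^2=\mathrm{diag}(XX^{\top},\,X^{\top}X)$ and the nonzero eigenvalues of $XX^{\top}$ and $X^{\top}X$ agree with multiplicity, the multiset $\{\lambda_i(A)^2:1\le i\le n+m\}$ consists of the $n$ eigenvalues $\mu_1\ge\cdots\ge\mu_n$ of $XX^{\top}$, each taken with multiplicity two (the singular-value pairs $\pm\sqrt{\mu_j}$ of $A$ described after \eqref{eq:A}), together with exactly $m-n$ further zeros. Consequently, for every function $g$,
\[
\sum_{i=1}^{n+m} g\big(\lambda_i(A)^2\big)\;=\;2\sum_{j=1}^{n} g(\mu_j)\;+\;(m-n)\,g(0).
\]
Taking $g(t)=f\!\big(\tfrac{t-d_1}{\sqrt{(d_1-1)(d_2-1)}}\big)$ and noting $g(\mu_j)=f(\lambda_j)$ and $g(0)=f\!\big(\tfrac{-d_1}{\sqrt{(d_1-1)(d_2-1)}}\big)$, where $\lambda_1\ge\cdots\ge\lambda_n$ now denote the eigenvalues of $\tfrac{XX^{\top}-d_1I}{\sqrt{(d_1-1)(d_2-1)}}$, a rearrangement shows that the variable $Y_f^{(n)}$ in the statement equals $\sum_{j=1}^n f(\lambda_j)-na_0$, i.e.\ exactly the variable $Y_f^{(n)}$ of Theorem~\ref{thm:CLTfixed}.

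Second, I would verify that the hypotheses of Theorem~\ref{thm:CLTfixed} are in force. The analyticity and growth conditions on $f$ are identical, so the only subtlety is that $f$ is now also evaluated at the additional point $\tfrac{-d_1}{\sqrt{(d_1-1)(d_2-1)}}$. Since $d_1\ge d_2\ge 2$, this point equals $-K_1/(d_2-1)$ with $K_1=d_1\sqrt{d_2-1}/\sqrt{d_1-1}$ the largest eigenvalue of $\tfrac{XX^{\top}-d_1I}{\sqrt{(d_1-1)(d_2-1)}}$, hence lies in $[-K_1,K_1]$, the interval on which the absolutely convergent expansion $f=\sum_k a_k\Gamma_k$ was already established inside the proof of Theorem~\ref{thm:CLTfixed}. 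Applying that theorem then yields that $Y_f^{(n)}$ converges in distribution to $Y_f=\sum_{k\ge 2}\tfrac{a_k}{[(d_1-1)(d_2-1)]^{k/2}}\CNBW_k^{(\infty)}$, as claimed.

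I do not anticipate a genuine obstacle: the entire content sits in the bookkeeping of the first step. The one place to be careful is the multiplicity count for the zero eigenvalues of $A$ — that is, correctly accounting for $X^{\top}X$ versus $XX^{\top}$ even when $XX^{\top}$ is itself singular, so that the displayed identity is valid for arbitrary $g$ — and confirming that the extra argument $\tfrac{-d_1}{\sqrt{(d_1-1)(d_2-1)}}$ falls inside the range where the $\Gamma_k$-expansion of $f$ has been justified, so that the reduction is not merely formal.
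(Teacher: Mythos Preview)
Your proposal is correct and follows essentially the same approach as the paper: both reduce the statement to Theorem~\ref{thm:CLTfixed} via the deterministic spectral identity between the eigenvalues of $A$ and those of $XX^{\top}$ (the $2n$ paired eigenvalues $\pm\sqrt{\mu_j}$ plus $m-n$ extra zeros). Your write-up is considerably more detailed than the paper's two-sentence proof, and your additional verification that $-d_1/\sqrt{(d_1-1)(d_2-1)}\in[-K_1,K_1]$ is not strictly needed (once the identity is established, the corollary's $Y_f^{(n)}$ is literally the theorem's $Y_f^{(n)}$), but it does no harm.
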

\begin{proof}
Recall from Section \ref{sec:RBBG} that all eigenvalues of $A$ consist of two parts. There are $2n$ eigenvalues  in pair as $\{-\lambda,\lambda\}$ where $\lambda$ is a singular value of $X$. In addition, there are $(m-n)$ extra zero eigenvalues. the result then follows from the algebraic relation between eigenvalues of $A$ and eigenvalues of $XX^{\top}-d_1I$.
\end{proof}

\subsection{Gaussian fluctuations with growing degrees}\label{sec:gaussian}
In this section, we consider the eigenvalue fluctuations of RBBGs when $d_1\cdot d_2\to\infty$.

We first prove the following weak convergence result for a normalized and centered version of $\CNBW_k^{(\infty)
}$.

\begin{lemma}\label{lem:NKconvergenceZ}
Suppose that $d_1\cdot d_2\to\infty$, $r_n\to\infty$ as $n\to\infty$. For $k\geq 2$,
define 
\begin{align}\label{eq:Nkn1}
    N_k^{(n)}:=\frac{1}{[d_1-1)(d_2-1)]^{k/2}}\left(\CNBW_k^{(\infty)}-\mathbb E\CNBW_k^{(\infty)}\right) \mathbf{1}_{\{ k\leq r_n\}}.
\end{align}
Let $\{Z_k\}_{k\geq 2}$ be independent Gaussian random variables with $\mathbb E Z_k=0$ and $\mathbb E Z_k^2=2k$.   Define the weight $w_k=b_k/(k^2\log (k+1))$, where $(b_k)_{k\in\mathbb N}$ is any fixed positive summable sequence. 

Let $P_n$ be the law of the sequence $(N_k^{(n)})_{k\geq 2}$. Then 
as an element in $\mathcal P(\mathcal X)$, $ P_n$ converges weakly to the law of the random vector $(Z_k)_{k\geq 2}$. \end{lemma}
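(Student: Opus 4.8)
The plan is to establish weak convergence in $\mathcal X = L^2(\vec w)$ by combining a tightness (precompactness) argument with convergence of all finite-dimensional marginals, and then invoking Proposition \ref{prop:weakconvergence}(3) to conclude that the limit is exactly the law of $(Z_k)_{k\geq 2}$. First I would record the key structural fact: by \eqref{eq:CNBWinfinity}, $\CNBW_k^{(\infty)} = \sum_{j \mid k} 2j C_j^{(\infty)}$, where the $C_j^{(\infty)}$ are independent Poisson variables with mean $\mu_j = [(d_1-1)(d_2-1)]^j/(2j)$. Hence $N_k^{(n)}$ is, up to the truncation indicator, a finite linear combination of centered Poisson variables, and the normalization $[(d_1-1)(d_2-1)]^{-k/2}$ is precisely tuned so that the dominant term $j = k$ contributes $2k \cdot [(d_1-1)(d_2-1)]^{-k/2}(C_k^{(\infty)} - \mu_k)$, which has variance $(2k)^2 [(d_1-1)(d_2-1)]^{-k}\mu_k = 2k$. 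The lower-order terms $j \mid k$, $j < k$, carry a factor $[(d_1-1)(d_2-1)]^{-(k-j)/2} \to 0$ since $d_1 d_2 \to \infty$; so their contribution to variance and to higher cumulants vanishes.

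For the finite-dimensional marginals: fix indices $2 \le k_1 < \cdots < k_\ell \le r$; for $n$ large enough that $r_n \ge k_\ell$ the truncation indicators are all $1$. I would show $(N_{k_1}^{(n)}, \dots, N_{k_\ell}^{(n)}) \Rightarrow (Z_{k_1}, \dots, Z_{k_\ell})$ by the Lindeberg CLT for triangular arrays, or more cleanly by showing that each scaled centered Poisson $[(d_1-1)(d_2-1)]^{-j/2}(C_j^{(\infty)} - \mu_j)$ with $\mu_j \to \infty$ converges to a centered Gaussian of variance $\lim (2j)^{-1}[(d_1-1)(d_2-1)]^{-j} \cdot \mu_j \cdot (2j)^2$-type scaling — in fact the single relevant summand is $j=k$ and its characteristic function $\exp(\mu_k(e^{it/[(d_1-1)(d_2-1)]^{k/2}} - 1 - it/[(d_1-1)(d_2-1)]^{k/2}))$ converges to $\exp(-t^2 \cdot 2k/2)$ by Taylor expansion, since $\mu_k [(d_1-1)(d_2-1)]^{-k} = 1/(2k)$ exactly. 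Independence across distinct $k$'s in the limit follows because distinct values $k$ involve disjoint... actually they need not involve disjoint sets $\{j : j \mid k\}$, so I would instead argue joint convergence by computing the joint characteristic function directly: group the Poisson variables $C_j^{(\infty)}$ that appear, and check that each $C_j^{(\infty)}$ with $j$ not equal to one of the $k_i$ contributes negligibly, while for $j = k_i$ the coupling between different $k_i$ via a common divisor $j$ is of order $[(d_1-1)(d_2-1)]^{-(k_i - j)/2}\to 0$; hence the limiting covariance matrix is diagonal with entries $2k_i$, giving independent Gaussians. The independence of $\{Z_k\}$ in the limit thus drops out of the covariance computation.

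For tightness: by Proposition \ref{prop:weakconvergence}(1), the sets $\{(b_m) : 0 \le |b_m| \le a_m\ \forall m\}$ with $(a_m) \in L^2(\vec w)$, $a_m \ge 0$, are compact in $\mathcal X$; so it suffices to find, for every $\epsilon > 0$, a sequence $(a_k^{(\epsilon)}) \in L^2(\vec w)$ with $\prb(|N_k^{(n)}| \le a_k^{(\epsilon)}\ \forall k) \ge 1 - \epsilon$ uniformly in $n$. Since $\expval |N_k^{(n)}|^2 \le 2k(1 + o(1))$ uniformly in $n$ (the variance bound above, using $d_1 d_2 \to \infty$ to control lower-order terms — here I would just bound $\expval|N_k^{(n)}|^2 \le C k$ for an absolute $C$ and all $n$, $k$), Markov's inequality gives $\prb(|N_k^{(n)}| > t_k) \le C k / t_k^2$. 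Choosing $t_k = t_k(\epsilon)$ so that $\sum_k C k / t_k^2 \le \epsilon$ while $(t_k) \in L^2(\vec w)$ — e.g. $t_k = k^2 \sqrt{\log(k+1)} / \sqrt{b_k} \cdot (\text{const}/\sqrt\epsilon)$, whose $\vec w$-norm squared is $\sum_k t_k^2 w_k = \text{const} \sum_k k^2 \log(k+1) / b_k \cdot b_k/(k^2 \log(k+1)) \cdot (1/\epsilon)$... which diverges, so I would instead take $t_k = k^{3/2}\sqrt{\log(k+1)}\cdot c(\epsilon)$ and note $\sum t_k^2 w_k = c(\epsilon)^2 \sum k b_k < \infty$ since $(b_k)$ is summable, while $\sum Ck/t_k^2 = (C/c(\epsilon)^2)\sum 1/(k^2\log(k+1)) < \infty$, made $\le \epsilon$ by $c(\epsilon)$ large — giving the required uniform tightness. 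The weights $w_k$ were chosen precisely so that $\sum k b_k w_k < \infty$ (indeed $\sum b_k < \infty$) makes the $L^2(\vec w)$ condition hold.

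The main obstacle is the joint finite-dimensional convergence when the indices $k_i$ share divisors, since then the relevant Poisson variables $C_j^{(\infty)}$ are shared across coordinates and one must carefully verify that these shared contributions are asymptotically negligible, so that the limiting covariance is diagonal — this is exactly where $d_1 d_2 \to \infty$ enters in an essential way (for fixed degrees the limit would instead be the Poisson-type object of Theorem \ref{thm:CLTfixed}, not Gaussian). Everything else is routine: tightness via second moments and Proposition \ref{prop:weakconvergence}(1), and identification of the limit via Proposition \ref{prop:weakconvergence}(3). Once weak convergence and the correct finite-dimensional laws are in hand, Proposition \ref{prop:weakconvergence}(3) forces $P_n \Rightarrow \mathrm{Law}((Z_k)_{k\ge 2})$.
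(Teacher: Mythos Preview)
Your strategy for finite-dimensional convergence is essentially the paper's: isolate the dominant summand $j=k$ in $\CNBW_k^{(\infty)}=\sum_{j\mid k}2jC_j^{(\infty)}$, use the Poisson-to-Gaussian CLT on it, and kill the lower-order $j<k$ terms in probability via their variances (which carry the factor $[(d_1-1)(d_2-1)]^{j-k}\to 0$). The shared-divisor worry is a non-issue once you observe that the dominant terms for distinct $k$'s involve \emph{independent} $C_k^{(\infty)}$'s and every cross-term vanishes in probability; this is exactly how the paper argues (implicitly, via Slutsky).

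The genuine gap is in your tightness argument. Your choice $t_k=c(\epsilon)\,k^{3/2}\sqrt{\log(k+1)}$ gives $\sum_k t_k^2 w_k=c(\epsilon)^2\sum_k k\,b_k$, and you assert this is finite ``since $(b_k)$ is summable'' --- but summability of $(b_k)$ does \emph{not} imply $\sum k b_k<\infty$ (take $b_k=1/k^2$). More seriously, Chebyshev alone cannot be rescued by a cleverer choice of $t_k$: for $(t_k)\in L^2(\vec w)$ with $\sum_k k/t_k^2<\infty$, Cauchy--Schwarz forces
\[
\sum_k\sqrt{\frac{b_k}{k\log(k+1)}}\ \le\ \Bigl(\sum_k \tfrac{k}{t_k^2}\Bigr)^{1/2}\Bigl(\sum_k t_k^2 w_k\Bigr)^{1/2}<\infty,
\]
yet for e.g.\ $b_k=1/(k\log k\,(\log\log k)^2)$ the left side equals $\sum_k 1/(k\log k\,\log\log k)=\infty$. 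So second moments are provably insufficient for arbitrary summable $(b_k)$.

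The paper closes this gap by using a Poisson concentration inequality (sub-Gaussian tails for $\CNBW_k^{(\infty)}$ at the relevant scale), which yields
\[
\mathbb P\bigl(|N_k^{(n)}|>a_k\bigr)\le 2(k+1)^{-\alpha^2/32}
\]
for $a_k=\alpha k\sqrt{\log(k+1)}$. This $a_k$ satisfies $\sum_k a_k^2 w_k=\alpha^2\sum_k b_k<\infty$ \emph{exactly} because of the specific form $w_k=b_k/(k^2\log(k+1))$, and the tail bound is summable once $\alpha^2>32$. That exponential tail is the missing ingredient you need to supply.
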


\begin{proof} We first prove the following 
\textit{Claim (1)}: for any fixed $r$, $(N_k^{(n)})_{2\leq k\leq r}$ converges in distribution to $(Z_k)_{2\leq k\leq r}$.

For any fixed $k$, when $n$ is sufficiently large,  we can write  \eqref{eq:Nkn1} as 
\begin{align} \label{eq:Nkn}
 N_k^{(n)}=&\frac{1}{[d_1-1)(d_2-1)]^{k/2}}\left(2kC_k^{(\infty)}-[(d_1-1)(d_2-1)]^{k}\right)\\
  &+\frac{1}{[d_1-1)(d_2-1)]^{k/2}}\sum_{j\mid k, j<k} \left(2jC_j^{(\infty)}-[(d_1-1)(d_2-1)]^{j}\right).\notag
\end{align}

Recall $C_k^{(\infty)}$ is a Poisson random variable with mean $\frac{(d_1-1)^k(d_2-1)^k}{2k}$.
The first term in \eqref{eq:Nkn} converges in distribution to a centered Gaussian random variable $Z_k$ with variance $2k$ as $n\to\infty$ from the Gaussian approximation of Poisson distribution. 

To show the convergence of $N_k^{(n)}$ for a fixed $k$, it remains to show the second term in \eqref{eq:Nkn} converges to zero in probability. Note that  the second term in \eqref{eq:Nkn} has a zero mean and its variance is given by
\begin{align*}
    \textnormal{Var}\left[ \frac{1}{[d_1-1)(d_2-1)]^{k/2}}\sum_{j\mid k, j<k} \left(2jC_j^{(\infty)}-[(d_1-1)(d_2-1)]^{j}\right)\right]=\sum_{j\mid k, j<k} 2j[(d_1-1)(d_2-1)]^{j-k},
\end{align*}
which goes to $0$ as $n\to\infty$. Then by Chebyshev's inequality, this term converges to $0$ in probability. Therefore Claim (1) holds.

We further define $N_1^{(n)}=0, Z_1=0$, and consider the weak convergence of $(N_k^{(n)})_{k\in \mathbb N}$ as an element in $L^2(\vec{w})$. Since
\begin{align*}
    \mathbb E\sum_{k=1}^{\infty}(Z_k)^2w_k=\sum_{k=2}^{\infty}\frac{b_k}{k\log (k+1)}<\infty,
\end{align*}
$(Z_k)_{k\in\mathbb N}\in L^2(\vec{w})$ almost surely. From Claim (1), every sub-sequential limit of $P_n$ has the same finite-dimensional distributions as $(Z_k)_{k\in \mathbb N}$. From Proposition \ref{prop:weakconvergence} (3),  every sub-sequential weak limit of $P_n$ in $\mathcal P(\mathcal X)$  is equal to the law of $(Z_k)_{k\in \mathbb N}$. 

By Prokhorov's Theorem (see for example \cite[Chapter 14, Theorem 1.5]{shorack2017probability}), if $\{P_n\}_{n\in\mathbb N}$ is tight, and every weakly convergent sub-sequence has the same limit $\mu$ in $\mathcal P(\mathcal X)$, then the sequence $\{P_n\}_{n\in\mathbb N}$ converges weakly to $\mu$. 
Since we have already shown  every sub-sequential weak limit of  $P_n$ is the law of $(Z_k)_{k\in \mathbb N}$ in $\mathcal P(\mathcal X)$, to finish the proof, it remains to show $\{P_n\}_{n\in\mathbb N}$ is tight. 

From the description of compact sets in $L^2(\vec{w})$ given in Proposition \ref{prop:weakconvergence} (1), it suffices to show for any $\varepsilon>0$, there exists an element $(a_k)_{k\in \mathbb N}\in L^2(\vec{w})$ with $a_k>0, \forall k\in \mathbb N$, such that
\begin{align}\label{eq:condddd}
    \sup_{n}\mathbb P\left[\bigcup_{k\in \mathbb N}\left\{|N_k^{(n)}|>a_k\right\}\right]=\sup_{n}\mathbb P\left[\bigcup_{k=1}^{r_n}\left\{|N_k^{(n)}|>a_k\right\}\right] <\varepsilon,
\end{align}
where $\bigcup_{k\in \mathbb N}\left\{|N_k^{(n)}|>a_k\right\}$ is the complement of a compact set in $L^2(\vec{w})$.

For any fixed $\epsilon>0$, choose $a_k=\alpha k \sqrt{\log (k+1)}$ for a constant $\alpha^2>32$ depending on $\varepsilon$, then $(a_k)_{k\in\mathbb N}\in L^2(\vec{w})$ and $a_k>0, \forall k\in \mathbb N$.
According to the definition of $N_k^{(n)}$ in \eqref{eq:Nkn1}, the above Condition \eqref{eq:condddd} is equivalent to 
\begin{align}
    \sup_{n}\mathbb P\left[\bigcup_{k=1}^{r_n}\left\{|\CNBW_{k}^{(\infty)}-\mathbb E\CNBW_{k}^{(\infty)}|>a_k[(d_1-1)(d_2-1)]^{k/2}\right\}\right] <\varepsilon.
\end{align}

From the proof of Theorem 22 in \cite{dumitriu2013functional}, $\CNBW_{k}^{(\infty)}$, as a sum of independent Poisson random variables, satisfies the following concentration inequality: for any $t>0$,
\begin{align}\label{eq:poissonconcentration}
   \mathbb P\left( |\CNBW_{k}^{(\infty)}-\mathbb E\CNBW_{k}^{(\infty)}|>t\right)\leq 2\exp\left(-\frac{t}{8k}\log\left(1+\frac{t}{2k[(d_1-1)(d_2-1)]^k} \right) \right).
\end{align}

Since $\log (1+x)\geq x/2$ for $x\in [0,1]$, we have from \eqref{eq:poissonconcentration}, for sufficiently large $n$ and  all $k\leq r_n$,
\begin{align*}
    &\mathbb P\left( |\CNBW_{k}^{(\infty)}-\mathbb E\CNBW_{k}^{(\infty)}|>a_k[(d_1-1)(d_2-1)]^{k/2}\right)\\
   \leq  &2\exp\left(-\frac{a_k[(d_1-1)(d_2-1)]^{k/2}}{8k}\log\left(1+\frac{a_k}{2k[(d_1-1)(d_2-1)]^{k/2}} \right) \right) \\
   \leq &2 \exp\left(-\frac{a_k^2}{32k^2}\right)=2(k+1)^{-\alpha^2/32}.
\end{align*}
With the assumption $\alpha^2>32$, we can  make
$\sum_{k=1}^{\infty}2(k+1)^{-\alpha^2/32}<\epsilon$ by choosing a sufficiently large constant $\alpha$ depending on $\varepsilon$, which guarantees \eqref{eq:condddd}. Hence $\{P_n\}_{n\in \mathbb N}$ is tight. This completes the proof. 
\end{proof}

We now continue to study the eigenvalue fluctuation for $\frac{XX^{\top}-d_1I}{\sqrt{(d_1-1)(d_2-1)}}$ when $d_1\cdot d_2\to\infty$. Before stating the main result, we make several assumptions on the test function $f$.
Define 
\begin{align*}
    \Phi_0(x) &=1,\quad 
    \Phi_k(x)=2T_k(x/2), \quad \forall k\geq 1.
\end{align*}
Assume $f$ is an entire function on $\mathbb C$.  Let $K_1=\max\{\alpha_1,\alpha_2 \}$, where $\alpha_1$ and $\alpha_2$ are the constants in \eqref{eq:E1}, \eqref{eq:E2}, respectively.
Then from Proposition \ref{prop:approx}, $f$ has the expansion
\begin{align}\label{eq:fphi}
   f(x)=\sum_{i=0}^{\infty} a_i \Phi_i(x) 
\end{align}
on $[-K_1,K_1]$. Denote 
\[
    f_k(x):=\sum_{i=0}^k a_i\Phi_i(x). 
\]

Suppose the following conditions hold for $f$:
\begin{enumerate}
    \item For some $\alpha>3/2$ and $M>0$, 
    \begin{align}\label{eq:expbound}
        \sup_{|x|\leq K_1}|f(x)-f_{k}(x)|\leq M\exp(-\alpha k h(k)),
    \end{align}
    where $h$ is a function such that $h(r_n)\geq \log [(d_1-1)(d_2-1)]$ for a sequence 
     \begin{align} \label{eq:rnbound}
    r_n &=\left\lfloor\frac{\beta \log n}{\log[(d_1-1)(d_2-1)]} \right\rfloor
\end{align}
with a constant $\beta<1/\alpha$.
\item   
\begin{align}\label{eq:frn}
    \lim_{n\to\infty} \left| f_{r_n}\left(\frac{d_1(d_2-1)}{\sqrt{(d_1-1)(d_2-1)}}\right)-f\left(\frac{d_1(d_2-1)}{\sqrt{(d_1-1)(d_2-1)}}\right)\right|=0.
\end{align}
\end{enumerate}

Let 
$\mu_k(d_1,d_2):=\mathbb E\CNBW_k^{(\infty)}.$
We define the following sequence: 
\begin{align}\label{eq:defmfn}
    m_f^{(n)}:=na_0+\sum_{k=1}^{r_n} \frac{a_k}{[(d_1-1)(d_2-1)]^{k/2}}\left(\mu_k(d_1,d_2)-n(d_1-2)\cdot (d_2-1)^{k/2}  \mathbf{1}\{ \text{$k$ is even}\}\right).
\end{align}

Now we are ready to state our results for  eigenvalue fluctuations when $d_1\cdot d_2\to\infty$. Here $d_1,d_2$, $(\lambda_i)_{1\leq i\leq n}$ and the matrix $X$ are quantities  depending on $n$, but for simplicity of notations, we drop the dependence on $n$. 

\begin{theorem}\label{thm:GaussianCLT}
Let $G_n$ be a sequence of random $(d_1,d_2)$-biregular bipartite graphs with 
\[d_1d_2\to\infty,\quad  d_1d_2=n^{o(1)}.\] Let $\lambda_1\geq \cdots\geq \lambda_n$ be the eigenvalues of $\frac{XX^{\top}-d_1I}{\sqrt{(d_1-1)(d_2-1)}}$. 
Suppose one of the following two assumptions holds:
\begin{enumerate}
    \item  There exists a constant $c\geq 1$ such that $1\leq \frac{d_1}{d_2}\leq c$.
    \item There exists a constant $c_1$ such that $d_2\leq c_1$ for all $n$.
\end{enumerate} 
Let $f$ be an entire function on $\mathbb C$ satisfying \eqref{eq:expbound} and \eqref{eq:frn}.
Then as $n\to\infty$, the random variable
\begin{align}\label{eq:Yfn}
  Y_f^{(n)}=\sum_{i=1}^n f(\lambda_i)- m_f^{(n)}  
\end{align}
converges in distribution to a centered Gaussian random variable with  variance $\sigma_f=2\sum_{k=2}^{\infty}ka_k^2$.

Moreover, for any fixed $t$, consider the entire functions $g_1,\dots, g_t$   satisfying \eqref{eq:expbound} and \eqref{eq:frn}. The corresponding random vector 
$(Y_{g_1}^{(n)},\dots, Y_{g_t}^{(n)})$ converges in distribution to a centered Gaussian random vector $(Z_{g_1},\dots, Z_{g_t})$ with covariance
\begin{align}\label{eq:cov}
    \textnormal{Cov}(Z_{g_i},Z_{g_j})=2\sum_{k=2}^{\infty}ka_k(g_i) a_k(g_j)
\end{align}
for $1\leq i,j\leq t$,
where $a_k(g_i), a_k(g_j)$ are the $k$-th coefficients in the expansion \eqref{eq:fphi} for $g_i,g_j$, respectively.
\end{theorem}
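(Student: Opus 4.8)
The plan is to reduce the linear statistic $\sum_{i=1}^n f(\lambda_i)$ to a fixed linear functional of the cyclically non‑backtracking walk counts and then import the Gaussian approximation of Lemma~\ref{lem:NKconvergenceZ}. First I would record the algebraic identity behind everything: since $\Phi_{2k}=\Gamma_{2k}-(d_1-2)(d_1-1)^{-k}$ and $\Phi_{2k+1}=\Gamma_{2k+1}$, Theorem~\ref{thm:chebyshevCNBW} gives, for each $k\ge 1$,
\[
\sum_{i=1}^n \Phi_k(\lambda_i)=\frac{\CNBW_k^{(n)}-n(d_1-2)(d_2-1)^{k/2}\mathbf 1\{k\text{ even}\}}{[(d_1-1)(d_2-1)]^{k/2}}.
\]
Multiplying by $a_k$, summing over $1\le k\le r_n$ with $r_n$ as in \eqref{eq:rnbound}, and using $\Phi_0\equiv 1$, $\CNBW_1^{(n)}=0$ and the definition \eqref{eq:defmfn} of $m_f^{(n)}$, this collapses to
\[
\sum_{i=1}^n f_{r_n}(\lambda_i)-m_f^{(n)}=\sum_{k=2}^{r_n} a_k\,\frac{\CNBW_k^{(n)}-\mu_k(d_1,d_2)}{[(d_1-1)(d_2-1)]^{k/2}}.
\]

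Next I would show the truncation is harmless, i.e.\ $\sum_{i=1}^n\bigl(f(\lambda_i)-f_{r_n}(\lambda_i)\bigr)\to 0$ in probability, so that by Slutsky's theorem $Y_f^{(n)}$ and the right‑hand side above have the same limit. The top eigenvalue $\lambda_1=d_1(d_2-1)/\sqrt{(d_1-1)(d_2-1)}$ is deterministic and is handled by hypothesis \eqref{eq:frn}; the remaining $n-1$ eigenvalues lie in $[-K_1,K_1]$ with probability at least $1-n^{-2}$ by the spectral gap bound \eqref{eq:E1} (under assumption (1)) or \eqref{eq:E2} (under assumption (2)), and on that event \eqref{eq:expbound} bounds $\sum_{i\ge 2}|f(\lambda_i)-f_{r_n}(\lambda_i)|$ by $nMe^{-\alpha r_n h(r_n)}$, which tends to $0$ for the stated choice of $r_n$ (this is where $h(r_n)\ge\log[(d_1-1)(d_2-1)]$, $d_1d_2=n^{o(1)}$ and the constraint on $\beta$ enter). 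Then I would replace the finite model by the $\infty$-model: the map $(c_k)_{2\le k\le r_n}\mapsto\sum_k a_k(c_k-\mu_k)[(d_1-1)(d_2-1)]^{-k/2}$ is measurable, so Corollary~\ref{cor:dTVCNBW} together with the contraction of total variation under pushforwards gives
\[
d_{\mathrm{TV}}\!\left(\sum_{k=2}^{r_n}a_k\,\frac{\CNBW_k^{(n)}-\mu_k}{[(d_1-1)(d_2-1)]^{k/2}},\ \sum_{k=2}^{r_n}a_k N_k^{(n)}\right)\le\frac{c_8\sqrt{r_n}\,[(d_1-1)(d_2-1)]^{3r_n/2}}{nd_1}\to 0,
\]
since the constraint $\beta<2/3$ forces $[(d_1-1)(d_2-1)]^{3r_n/2}=o(n)$, where $N_k^{(n)}$ is as in \eqref{eq:Nkn1}.

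At this point the Gaussian limit follows from the machinery already in place. Because $f$ is entire, its Chebyshev coefficients $a_k$ decay faster than any geometric sequence, so $(a_k)_{k\ge 2}$ defines a continuous linear functional on $\mathcal X=L^2(\vec w)$ and $\sum_{k\ge 2}ka_k^2<\infty$. By Lemma~\ref{lem:NKconvergenceZ}, $(N_k^{(n)})_{k\ge 2}$ converges weakly in $\mathcal X$ to $(Z_k)_{k\ge 2}$ with the $Z_k$ independent, $\mathbb E Z_k=0$, $\mathbb E Z_k^2=2k$; hence by Proposition~\ref{prop:weakconvergence}(2), $\sum_{k\ge 2}a_k N_k^{(n)}$ converges in distribution to $\sum_{k\ge 2}a_kZ_k$, a centered Gaussian of variance $2\sum_{k\ge 2}ka_k^2=\sigma_f$. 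Combining with the previous two steps gives $Y_f^{(n)}\Rightarrow\mathcal N(0,\sigma_f)$. For the joint statement I would run the same reduction simultaneously for $g_1,\dots,g_t$: up to the $o_{\mathbb P}(1)$ errors above, $(Y_{g_1}^{(n)},\dots,Y_{g_t}^{(n)})$ is the image of $(N_k^{(n)})_{k\ge 2}$ under the continuous linear map $x\mapsto(\langle a(g_j),x\rangle)_{1\le j\le t}$, so the continuous mapping theorem yields convergence to $(\langle a(g_j),Z\rangle)_j$, a centered Gaussian vector with $\mathbb E[\langle a(g_i),Z\rangle\langle a(g_j),Z\rangle]=2\sum_{k\ge 2}k\,a_k(g_i)a_k(g_j)$, as claimed in \eqref{eq:cov}.

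I expect the main obstacle to be the choice of the truncation level $r_n$: it must be large enough that the deterministic tail $\sum_{i\ge 2}|f(\lambda_i)-f_{r_n}(\lambda_i)|\le nMe^{-\alpha r_n h(r_n)}$ is $o(1)$ on the confinement event, yet small enough that the Poisson‑approximation error $[(d_1-1)(d_2-1)]^{3r_n/2}/(nd_1)$ supplied by Corollary~\ref{cor:dTVCNBW} is $o(1)$; the hypotheses $\alpha>3/2$, $d_1d_2=n^{o(1)}$ and the lower bound on $h(r_n)$ are exactly what make an admissible choice of $\beta$ possible. A secondary technical point is that eigenvalue confinement must be supplied by two different estimates in the two regimes (bounded aspect ratio versus bounded $d_2$), but both are quoted in Theorem~\ref{thm:spectralgap}.
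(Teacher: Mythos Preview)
Your proposal is correct and follows essentially the same route as the paper: the algebraic reduction of $\sum_i f_{r_n}(\lambda_i)-m_f^{(n)}$ to a linear combination of centered $\CNBW$ counts via Theorem~\ref{thm:chebyshevCNBW}, the total variation comparison to $\sum_k a_k N_k^{(n)}$ via Corollary~\ref{cor:dTVCNBW}, the Gaussian limit via Lemma~\ref{lem:NKconvergenceZ} and Proposition~\ref{prop:weakconvergence}(2), and the Slutsky step for $Y_f^{(n)}-X_f^{(n)}$ via the spectral confinement of Theorem~\ref{thm:spectralgap}. Your discussion of the competing constraints on $\beta$ (the truncation tail requires $\alpha\beta>1$, the Poisson approximation requires $\beta<2/3$, and $\alpha>3/2$ makes these compatible) is in fact more explicit than the paper's own presentation.
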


\begin{proof}
We first prove the CLT for a single test function $f$.
Define 
\begin{align*}
    X_f^{(n)} &:=\sum_{k=2}^{r_n}\frac{a_k}{[(d_1-1)(d_2-1)]^{k/2}}\CNBW_{k}^{(n)} -\mathbb E\sum_{k=2}^{r_n}\frac{a_k}{[(d_1-1)(d_2-1)]^{k/2}} \CNBW_{k}^{(\infty)} ,\\
     \tilde{X}_f^{(n)} &:=\sum_{k=2}^{r_n}\frac{a_k}{[(d_1-1)(d_2-1)]^{k/2}}\CNBW_{k}^{(\infty)} -\mathbb E\sum_{k=2}^{r_n}\frac{a_k}{[(d_1-1)(d_2-1)]^{k/2}} \CNBW_{k}^{(\infty)}.
\end{align*}

Recall the definition of $m_f(n)$ in \eqref{eq:defmfn}. 
From  \eqref{eq:gamma9}, \eqref{eq:gamma99}, and  \eqref{eq:formulaCNBW}, $X_f^{(n)}$ can be written as
\begin{align*}
    X_f^{(n)}&=\sum_{k=2}^{r_n}\sum_{i=1}^n a_k\Gamma_k(\lambda_i)-\sum_{k=2}^{r_n}\frac{a_k}{[(d_1-1)(d_2-1)]^{k/2}}\mu_k(d_1,d_2)\\
    &=\sum_{k=2}^{r_n}\sum_{i=1}^n \left(2a_k T_k(\lambda_i/2)+\frac{a_k(d_1-2)}{(d_1-1)^{k/2}}\mathbf{1}_{\{\text{$k$ is even}\}}\right)-\sum_{k=2}^{r_n}\frac{a_k}{[(d_1-1)(d_2-1)]^{k/2}}\mu_k(d_1,d_2)\\
    &=\sum_{i=1}^n f_{r_n}(\lambda_i)-na_0+ \sum_{k=2}^{r_n}\frac{na_k(d_1-2)}{(d_1-1)^{k/2}}\mathbf{1}_{\{\text{$k$ is even}\}}-\sum_{k=2}^{r_n}\frac{a_k}{[(d_1-1)(d_2-1)]^{k/2}}\mu_k(d_1,d_2)\\
    &=\sum_{i=1}^n f_{r_n}(\lambda_i)-m_f^{(n)},
\end{align*}
where in the third line we use the fact given in \eqref{eq:formulaCNBW} that 
\[\sum_{i=1}^n 2a_1T_1(\lambda_i/2)=\sum_{i=1}^n a_1\Gamma_1(\lambda_i)=a_1[(d_1-1)(d_2-1)]^{-1/2}\CNBW_1^{(n)}=0.\]
From the definition of $N_k^{(n)}$ in \eqref{eq:Nkn1},
\begin{align*}
    \tilde{X}_f^{(n)}&=\sum_{k=2}^{r_n}a_kN_k^{(n)}.
\end{align*}
By Lemma \ref{lem:NKconvergenceZ} and Proposition \ref{prop:weakconvergence} (2), $\tilde{X}_f^{(n)}$ converges in distribution to a centered Gaussian random variable with variance  $\sigma_f=\sum_{k=2}^{\infty}2ka_k^2$. 

From Corollary \ref{cor:dTVCNBW}, the total variation distance between $X_f^{(n)}$ and $\tilde{X}_f^{(n)}$ satisfies 
\begin{align*}
    d_{\textnormal{TV}}(X_f^{(n)}, \tilde{X}_f^{(n)})&\leq 	d_{\textnormal{TV}}\left( (\CNBW_k^{(n)},2\leq k\leq r_n),(\CNBW_{k}^{(\infty)},2\leq k\leq r_n)\right)\\
    &\leq \frac{c_8\sqrt{r_n}[(d_1-1)(d_2-1)]^{3r_n/2}}{nd_1},
\end{align*}
which converges to $0$ as $n\to\infty$ from the assumption \eqref{eq:rnbound}. Therefore $X_f^{(n)}$ and  $\tilde{X}_f^{(n)}$ converge to the same limit.

It remains to show  $Y_f^{(n)}$ and $X_f^{(n)}$ converge in distribution to the same limit. We have $f_{r_n}(\lambda_1)\to f(\lambda_1)$ as $n\to\infty$ from \eqref{eq:frn}. Then for any $\delta>0$, 
$|f(\lambda_1)-f_{r_n}(\lambda_1)|\leq \delta/2$ for sufficiently large $n$.

Suppose that all the non-trivial eigenvalues are contained in $[-K_1,K_1]$. From Condition \eqref{eq:expbound},  we have for sufficiently large $n$,
\begin{align*}
    \left|Y_f^{(n)}-X_f^{(n)}\right|&\leq \sum_{i=1}^n |f(\lambda_i)-f_{r_n}(\lambda_i)|\leq \frac{\delta}{2}+ (n-1)M\exp(-\alpha r_nh(r_n))\leq \frac{\delta}{2}+Mn^{1-\alpha\beta}<\delta.
\end{align*}
Therefore 
\begin{align}\label{eq:Yfff}
  \mathbb P\left( \left|Y_f^{(n)}-X_f^{(n)}\right|\geq \delta \right)\leq  \mathbb P\left( \max_{2\leq i\leq n}|\lambda_i|\geq K_1 \right)=o(1),
\end{align}
where the last inequality is from parts (2) and (3) in Theorem \ref{thm:spectralgap}. Hence $Y_f^{(n)}$ and $X_f^{(n)}$ converge in distribution to the same limit.  This proves the CLT for \eqref{eq:Yfn}.

We now extend the results to a  random vector $(Y_{g_1},\dots, Y_{g_t})$. By Lemma \ref{lem:NKconvergenceZ} and part (2) in Proposition \ref{prop:weakconvergence}, the random vector $(\tilde{X}_{g_1}^{(n)},\dots, \tilde{X}_{g_t}^{(n)})$ converges in distribution to the Gaussian random  vector $(Z_{g_1},\dots, Z_{g_t})$ with covariance given in  \eqref{eq:cov}.
 
Note that each entry in the vector $(X_{g_1}^{(n)},\dots, X_{g_t}^{(n)} )$ is a measurable function of $ (\CNBW_k^{(n)} )_{2\leq k\leq r_n}$, and we can find a measurable map $\psi$   such that 
\begin{align*}
 \psi( (\CNBW_k^{(n)} )_{2\leq k\leq r_n})&=(X_{g_1}^{(n)},\dots, X_{g_t}^{(n)}), \quad 
 \psi( (\CNBW_k^{(\infty)} )_{2\leq k\leq r_n})=(\tilde X_{g_1}^{(n)},\dots, \tilde{X}_{g_t}^{(n)}).
\end{align*}
Since any measurable map reduces the total variation distance between two random variables, we obtain
from \eqref{eq:TVinequality},
\begin{align*}
    &d_{\textnormal{TV}}\left( (X_{g_1}^{(n)},\dots, X_{g_t}^{(n)}),(\tilde{X}_{g_1}^{(n)},\dots, \tilde{X}_{g_t}^{(n)})\right)\\
    \leq & d_{\textnormal{TV}}\left( (\CNBW_k^{(n)},2\leq k\leq r_n),(\CNBW_{k}^{(\infty)},2\leq k\leq r_n)\right)\\
    \leq & \frac{c_8\sqrt{r_n}[(d_1-1)(d_2-1)]^{3r_n/2}}{nd_1}=o(1). 
\end{align*}
Therefore $\left(X_{g_1}^{(n)},\dots, X_{g_t}^{(n)}\right)$ converges in distribution to $(Z_{g_1},\dots, Z_{g_t})$. 
Finally, according to \eqref{eq:Yfff}, $\left(X_{g_1}^{(n)},\dots, X_{g_t}^{(n)}\right)$ and $\left(Y_{g_1}^{(n)},\dots, Y_{g_t}^{(n)}\right)$ converge in distribution to the same limit. This finishes the proof.
\end{proof} 

\begin{remark}
 In \cite{chen2015clt}, the authors proved a CLT for linear spectral statistics for normalized sample covariance matrices $A=\frac{1}{\sqrt{np}}(XX^{\top}-pI)$, where $p/n\to\infty$ and $X=(X_{ij})_{n\times p}$ has i.i.d. entries with mean $0$ variance $1$. It is shown in Theorem 1 of \cite{chen2015clt} that the fluctuations of linear statistics for two analytic functions $g_1,g_2$ converge in distribution to a centered Gaussian vector with covariance given by $(\nu_4-3)a_1(g_1)a_1(g_2)+2\sum_{k=1}^{\infty}ka_k(g_1)a_k(g_2),$
 where  $\nu_4=\mathbb EX_{11}^4$.  
The covariance  given in \eqref{eq:cov} is  the same, except for the fact that the  coefficient in front of $a_1(f_1)a_1(f_2)$ is 0. This can be explained by the fact that the number of 2-cycles  is 0 in RBBGs, whereas in the model used in \cite{chen2015clt} it is not.
 The same phenomenon was also observed in uniform  random regular graphs \cite{johnson2015exchangeable}, where the limiting variance  is the same as the eigenvalue fluctuations for the GOE  except for the first two terms, see Remark 22 in \cite{johnson2015exchangeable}.
\end{remark}

\section{Global semicircle law}\label{sec:globallaw}

Consider a random $(n,m,d_1,d_2)$-biregular bipartite graph with $d_1\geq d_2$. We assume $d_1,d_2$ satisfy the following: 
\begin{align}
	\lim_{n\to\infty}d_1 &=\infty,\label{eq:assumption1}\\
	d_1 &=o(n^{\epsilon}), \quad \forall \epsilon>0, \label{eq:assumption2}\\
	\frac{d_1}{d_2} &\to\infty.\label{eq:assumption3} 
\end{align}
Here $d_2$ can be fixed or a parameter depending on $n$. 
 In this section, we prove a semicircle law for the matrix $\frac{XX^{\top}-d_1I}{\sqrt{(d_1-1)(d_2-1)}}$ under the assumptions \eqref{eq:assumption1}-\eqref{eq:assumption3}.
 
 For RBBGs in this regime, we have the locally tree-like structure in the following sense. 
Let $R$ be fixed and $\tau_1$ be the set of vertices in $V_1$ without any cycles in the $R$-neighborhood. The following lemma holds.
\begin{lemma}\label{taubound}
	Then under Condition
	\eqref{eq:assumption2},
	\begin{align*}
		\mathbb P\left( \frac{n-|\tau_1|}{n}>n^{-1/4}\right)=o(n^{-5/4}).
	\end{align*}
\end{lemma}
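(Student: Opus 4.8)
The plan is to bound the expected number of vertices in $V_1$ whose $R$-neighborhood contains a cycle, and then apply Markov's inequality. Fix a vertex $v\in V_1$. If $v$ has a cycle in its $R$-neighborhood, then there is a short cycle $\alpha$ (of length at most $2R$, hence in $\bigcup_{k\le R}\mathcal{J}_k$) and a path of length at most $R$ from $v$ to $\alpha$. I would therefore estimate, for each $k\le R$ and each $\ell\le R$, the expected number of subgraphs of $G$ consisting of a $2k$-cycle together with a path of length $\le \ell$ attached to it and rooted at a $V_1$-vertex. Such a subgraph $H$ has every vertex of degree at least $2$ except possibly the endpoints of the attached path, but one can instead directly count: there are $O(n^{k}m^{k} \cdot (\text{at most } n^{\lceil \ell/2\rceil} m^{\lfloor \ell/2 \rfloor}))$ ways to embed it in $K_{n,m}$, and by Proposition~\ref{eq:propH} (or a variant of Lemma~\ref{lem:Mckay}) the probability that a fixed copy appears in $G$ is at most a constant times $\left(\frac{(d_1-1)(d_2-1)}{nm}\right)^{(\text{number of edges})/2}\cdot(\text{correction for the tail})$. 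Since $R$ is fixed and $d_1=o(n^\epsilon)$ for all $\epsilon>0$, the dominant contribution comes from the shortest configurations; summing over the $O(1)$ many isomorphism types one gets $\mathbb{E}(n-|\tau_1|)=O\!\big(d_1^{O(1)}\big)=O(n^{\epsilon})$ for every $\epsilon>0$, which in particular is $o(n^{3/4})$.

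More carefully: a vertex $v$ lies in $\tau_1^c$ iff its depth-$R$ exploration tree is not a tree, which happens iff within distance $R$ of $v$ there are two distinct vertices joined by two internally disjoint paths, equivalently $v$ is within distance $R$ of some cycle of length $\le 2R$. Enumerate over the cycle length $2k\le 2R$, the attachment point on the cycle, and the path from $v$; the number of edges in the resulting graph $H$ is $2k+t$ where $t\le R$ is the path length (with $t=0$ if $v$ is on the cycle), and $H$ has at most one vertex (the far endpoint $v$, if $t\ge 1$) of degree $1$ in $H$. Using the degree constraints as in Lemma~\ref{lem:F}, the number of embeddings of a fixed such $H$ into $G$ with $v$ ranging over $V_1$ is at most $[n]_a[m]_b$ for appropriate $a,b$ with $a+b = |V(H)|$, and $\mathbb{P}(H\subseteq G)\le c_1\big(\frac{(d_1-1)(d_2-1)}{nm}\big)^{|E(H)|/2}$ when $|E(H)|=o(n^{1/3})$ by the argument of Lemma~\ref{lem:Mckay}(1) applied after pruning the single degree-$1$ vertex (or directly via Proposition~\ref{eq:propH}). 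Multiplying, each isomorphism type contributes $O\!\big((d_1-1)^{O(R)}(d_2-1)^{O(R)}\big)$ to $\mathbb{E}(n-|\tau_1|)$, and there are only $O_R(1)$ types, so
\[
\mathbb{E}(n-|\tau_1|) \le C_R\,(d_1-1)^{C_R}(d_2-1)^{C_R} = C_R\, d_1^{O_R(1)}.
\]
By \eqref{eq:assumption2}, $d_1^{O_R(1)}=o(n^{1/4})$, hence $\mathbb{E}\big(\tfrac{n-|\tau_1|}{n}\big)=o(n^{-3/4})$.

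Finally, by Markov's inequality,
\[
\mathbb{P}\!\left(\frac{n-|\tau_1|}{n}>n^{-1/4}\right)\le \frac{\mathbb{E}(n-|\tau_1|)/n}{n^{-1/4}} = n^{1/4}\cdot \mathbb{E}\!\left(\frac{n-|\tau_1|}{n}\right)=n^{1/4}\cdot o(n^{-3/4})=o(n^{-1/2}),
\]
which is certainly $o(n^{-5/4})$... — wait, this only gives $o(n^{-1/2})$, so to reach $o(n^{-5/4})$ I would instead bound a higher moment or count more carefully. The cleanest fix: bound $\mathbb{P}(n-|\tau_1|\ge 1)$ by a union bound over all bad configurations. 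The expected number of (rooted short-cycle-plus-path) subgraphs is $O(d_1^{O_R(1)})=O(n^{\epsilon})$; this does not directly give a small probability. So instead I would use that $\{n-|\tau_1|\ge n^{3/4}\}$ requires at least $n^{3/4}$ such configurations, and estimate the expected number of \emph{$s$-tuples} of them for $s=\lceil n^{3/4}\rceil$, or more simply observe that $n-|\tau_1|$ is determined by the short cycles of $G$ and their $R$-neighborhoods: on the high-probability event (say from Theorem~\ref{thm:Poissoncount} with $r=R$) that the total number of short cycles is $O(\log n)$, each short cycle has an $R$-neighborhood of size at most $d_1^{O(R)}=n^{o(1)}$, so $n-|\tau_1|\le (\log n)\cdot n^{o(1)}=n^{o(1)}$, whence $\frac{n-|\tau_1|}{n}\le n^{-1+o(1)}<n^{-1/4}$ deterministically on that event; the complementary event has probability $o(n^{-5/4})$ provided Theorem~\ref{thm:Poissoncount} (or a direct Markov bound on $\sum_{k\le R}C_k$ raised to a suitable power) gives that tail.

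\textbf{Main obstacle.} The real work is getting the failure probability down to $o(n^{-5/4})$ rather than the $o(n^{-1/2})$ that a one-step Markov bound on $n-|\tau_1|$ yields. I expect to handle this by controlling a sufficiently high moment of $\sum_{k=2}^{R} C_k$ (the number of short cycles), using the subgraph-count estimates in Lemma~\ref{lem:Mckay} to show $\mathbb{E}\big(\sum_{k\le R}C_k\big)^{p}=O_R(1)$ for any fixed $p$ (since all relevant factorial-moment-type quantities are $d_1^{O(1)}=n^{o(1)}$), and then noting $n-|\tau_1|\le d_1^{O(R)}\sum_{k\le R}C_k$ deterministically; Markov with $p=2$ on this bound gives $\mathbb{P}(n-|\tau_1|>n^{3/4})=O\big(d_1^{O(R)}n^{-3/2}\big)=o(n^{-5/4})$ once $\epsilon$ in \eqref{eq:assumption2} is taken small enough relative to the exponents. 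Everything else is a routine enumeration of the $O_R(1)$ local configurations.
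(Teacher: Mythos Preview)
Your final approach---bounding $n-|\tau_1|$ deterministically by $d_1^{O(R)}\sum_{k=2}^{R}C_k$ (since each short cycle can ``contaminate'' at most $O(d_1^{O(R)})$ vertices of $V_1$) and then applying Markov to the second moment---is exactly what the paper does. The paper writes this as $n-|\tau_1|\le N_R:=c_1\sum_{s=2}^{R}s[(d_1-1)(d_2-1)]^{(R-s)/2+1}C_s$, invokes Lemma~\ref{marvcenko} (quoted from \cite{dumitriu2016marvcenko}) for $\mathbb{E}C_k$ and $\mathrm{Var}(C_k)$, and gets $\mathbb{E}[N_R^2]=O([(d_1-1)(d_2-1)]^{2R+2})$, whence $\mathbb{P}(N_R\ge n^{3/4})\le \mathbb{E}[N_R^2]/n^{3/2}=o(n^{-5/4})$ by \eqref{eq:assumption2}. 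Your detours through first-moment Markov and the Poisson approximation are unnecessary once you have the second-moment route in hand.

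One correction: your claim $\mathbb{E}\big(\sum_{k\le R}C_k\big)^{p}=O_R(1)$ is false when $d_1\to\infty$, since already $\mathbb{E}C_k\sim\mu_k=[(d_1-1)(d_2-1)]^k/(2k)$ grows. The right statement (which you effectively use in the very next line) is that this $p$-th moment is $d_1^{O_{R,p}(1)}=n^{o(1)}$; for $p=2$ this is exactly what the paper gets via Lemma~\ref{marvcenko}, and your proposed derivation via Lemma~\ref{lem:Mckay} would work too, but is more laborious than simply citing the known mean/variance estimates.
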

To prove Lemma \ref{taubound}, the following estimates on the expectation and variance of the cycle counts  of RBBGs given in \cite{dumitriu2016marvcenko} are needed.
\begin{lemma}[Proposition 4 in \cite{dumitriu2016marvcenko}]\label{marvcenko} Let $C_k$ be the number of cycles of length $2k$ in a random  $(d_1,d_2)$-biregular bipartite graph. Denote $\mu_k=\frac{[(d_1-1)(d_2-1)]^{k}}{2k}$. If $d_1=o(n), k=O(\log n)$ and $kd_1=o(n)$, then 
    \begin{align}
        \mathbb EC_k&=\mu_k\left(1+O\left( \frac{k(k+d_1)}{n}\right)\right),\\
        \textnormal{Var}[C_k]&=\mu_k\left(1+O\left( \frac{d_1^{2k}(k(d_1/d_2)^{2k-1}+(d_1/d_2)^{-k}d_2)}{n}\right)\right).
    \end{align}
\end{lemma}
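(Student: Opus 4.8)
The plan is to derive the first identity by a first-moment computation and the second by a second-moment expansion, using in both cases the precise asymptotics for the probability that a fixed short subgraph is contained in $G$ (which is exactly the content of Proposition 4 in \cite{dumitriu2016marvcenko}).

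\textbf{Mean.} By linearity of expectation and the vertex-label exchangeability of the uniform RBBG model, $\mathbb E C_k=|\mathcal J_k|\cdot\mathbb P(\alpha\subseteq G)$ for any fixed $2k$-cycle $\alpha\subseteq K_{n,m}$, where $|\mathcal J_k|=[n]_k[m]_k/(2k)$. The core input is the two-sided estimate
\[
\mathbb P(\alpha\subseteq G)=\left(\frac{(d_1-1)(d_2-1)}{nm}\right)^{k}\left(1+O\left(\frac{k(k+d_1)}{n}\right)\right).
\]
The upper bound is Lemma \ref{lem:Mckay}(2), a consequence of Proposition \ref{eq:propH}; for the matching lower bound I would invoke McKay's subgraph-probability asymptotics \cite{mckay1981subgraphs,mckay2004short}, which refine Proposition \ref{eq:propH} to $\mathbb P(\alpha\subseteq G)=\frac{(d_1(d_1-1))^{k}(d_2(d_2-1))^{k}}{[nd_1]_{2k}}\exp(O(kd_1/n))$, or equivalently run a single-edge ``rotation'' switching (delete an edge $uv$ of $\alpha$ and an independently chosen edge $u'v'$ not meeting $\alpha$, insert $uv'$ and $u'v$) and balance the switchings that destroy against those that create each of the $2k$ edges of $\alpha$. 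There are exactly three sources of error: $[n]_k[m]_k=(nm)^{k}(1+O(k^2/n))$, using $m=nd_1/d_2\ge n$; the normalization $[nd_1]_{2k}=(nd_1)^{2k}(1+O(k^2/(nd_1)))$; and the simple-graph correction in McKay's formula, which for a subgraph with $2k$ adjacent edge-pairs contributes a factor $\exp(O(kd_1/n))$. Multiplying $|\mathcal J_k|$ by the displayed probability and collecting these errors yields $\mathbb E C_k=\mu_k(1+O(k(k+d_1)/n))$.

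\textbf{Variance.} Writing $I_\alpha=\mathbf 1\{\alpha\subseteq G\}$, so $C_k=\sum_{\alpha\in\mathcal J_k}I_\alpha$, expand
\[
\mathrm{Var}(C_k)=\sum_{\alpha}\mathbb P(\alpha\subseteq G)(1-\mathbb P(\alpha\subseteq G))+\sum_{\alpha\ne\beta}\left(\mathbb P(\alpha\cup\beta\subseteq G)-\mathbb P(\alpha\subseteq G)\mathbb P(\beta\subseteq G)\right).
\]
The diagonal sum equals $\mathbb E C_k-\sum_\alpha\mathbb P(\alpha\subseteq G)^2=\mathbb E C_k(1+O(\max_\alpha\mathbb P(\alpha\subseteq G)))=\mu_k(1+o(1))$, the main term. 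For the off-diagonal sum I would classify ordered pairs $(\alpha,\beta)$ by the intersection graph $H=\alpha\cap\beta$, a forest with $p$ components and $f$ shared edges, exactly as in the proof of Theorem \ref{thm:Poissoncount}. When $H=\varnothing$ the number of adjacent edge-pairs of $\alpha\cup\beta$ is twice that of $\alpha$, so the McKay corrections factor and $\mathbb P(\alpha\cup\beta\subseteq G)/(\mathbb P(\alpha\subseteq G)\mathbb P(\beta\subseteq G))=[nd_1]_{2k}^{2}/[nd_1]_{4k}\cdot(1+o(1))=1+O(k^2/(nd_1))$; summing over disjoint pairs and using $\sum\mathbb P(\alpha\subseteq G)\mathbb P(\beta\subseteq G)\le(\mathbb E C_k)^2\le\mu_k^2(1+o(1))$ bounds this contribution by $\mu_k$ times an error absorbed into the stated bound. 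When $H\ne\varnothing$ there are comparatively few pairs — of order $(nm)^{2k-(p+f)/2}(2k)^{O(1)}$, with the $a,b$ vertices of $\alpha\cup\beta$ on the two sides obeying $a+b=4k-p-f$ and $|a-b|\le p$ — and each contributes at most $\mathbb P(\alpha\cup\beta\subseteq G)\le c_1((d_1-1)(d_2-1)/(nm))^{2k-f/2}$ by Lemma \ref{lem:Mckay}(3); combined with the isomorphism-type count of the form $(16k^{3})^{p-1}/((p-1)!)^{2}$, the labeling count $[n]_a[m]_b+[n]_b[m]_a$, and $n\le m$, summing over $p,f\ge 1$ again gives a contribution dominated by the stated error. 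Collecting the diagonal main term $\mu_k$ with these two off-diagonal contributions yields the claimed formula for $\mathrm{Var}(C_k)$.

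\textbf{Main obstacle.} The delicate point is the precise two-sided control of $\mathbb P(\alpha\subseteq G)$ and $\mathbb P(\alpha\cup\beta\subseteq G)$ with error uniform as $d_1\to\infty$, in particular pinning down how $d_1$ enters through the simple-graph correction — this is the origin of both the $kd_1/n$ term in the mean and the $d_1$-powers in the variance — and this is most cleanly handled by quoting McKay's subgraph asymptotics rather than re-deriving them. The secondary difficulty is combinatorial bookkeeping: controlling the sum over overlapping pairs $(\alpha,\beta)$ in the variance requires the forest enumeration of intersection types together with the asymmetric vertex-counting under $n\le m$, and one must verify that each resulting cross-term is dominated by one of the two pieces of the stated error bound.
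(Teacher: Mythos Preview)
The paper does not prove this lemma at all: it is quoted verbatim as Proposition~4 of \cite{dumitriu2016marvcenko} and used as a black box in the proof of Lemma~\ref{taubound}. So there is no ``paper's own proof'' to compare against; any comparison must be to the original argument in \cite{dumitriu2016marvcenko}.

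That said, your outline for the mean is correct and is exactly how the cited paper proceeds: count $2k$-cycles in $K_{n,m}$, multiply by McKay's two-sided subgraph probability, and track the three error sources you list. For the variance, your decomposition into diagonal and off-diagonal pairs and the classification of $(\alpha,\beta)$ by the forest $H=\alpha\cap\beta$ with parameters $(p,f)$ is also the right skeleton. However, your sketch stops short of actually producing the stated error term
\[
\frac{d_1^{2k}\bigl(k(d_1/d_2)^{2k-1}+(d_1/d_2)^{-k}d_2\bigr)}{n},
\]
which has a very particular dependence on the ratio $d_1/d_2$. Saying that the overlap sum is ``dominated by the stated error'' is not a proof: the two summands in the bracket arise from distinct worst cases in the $(p,f)$-sum (roughly, maximal $f$ with all shared vertices on the $V_1$ side versus the $V_2$ side, reflecting the asymmetry $n\le m$), and extracting them requires optimizing over $a,b$ with $|a-b|\le p$ rather than just invoking the crude bound $[n]_a[m]_b+[n]_b[m]_a\le 2n^{-p}(nm)^{2k-f/2}$ used in Theorem~\ref{thm:Poissoncount}. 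The latter suffices for a $o(1)$ bound but not for the sharper form claimed here. If you want to complete the argument you would need to redo that vertex-counting step more carefully, or simply cite \cite{dumitriu2016marvcenko} as the present paper does.
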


\begin{proof}[Proof of Lemma \ref{taubound}] 

From Lemma \ref{marvcenko}, for each fixed $k$, under Condition \eqref{eq:assumption2},
\begin{align}\label{eq:meanvariance}
    \mathbb EC_k=(1+o(1))\mu_k,\quad \textnormal{Var}[C_k]=(1+o(1))\mu_k.
\end{align} 

If a vertex $v_1\in V_1$ is not in $\tau_1$, then for some $s$ with $2\leq s\leq R$, there exists a $2s$-cycle within $(R-s)$-neighborhood of $v_1$. Hence the size of all $(R-s)$-neighborhoods of $2s$-cycles from $V_1$ gives an upper bound on $(n-|\tau_1|)$. 

For any $2s$-cycle, the size of its $(R-s)$-neighborhood from $V_1$ is bounded by \[c_1s[(d_1-1)(d_2-1)]^{(R-s)/2+1}\] with an absolute constant $c_1$. Define
\begin{align*}
    N_R:=c_1\sum_{s=2}^Rs[(d_1-1)(d_2-1)]^{(R-s)/2+1}C_s.
\end{align*}
We then have $n-\tau_1\leq N_R.$ From \eqref{eq:meanvariance},
$ \mathbb EN_R=O([(d_1-1)(d_2-1)]^{R+1}). $

Recall $R$ is fixed. By Cauchy inequality,
\begin{align*}
    \textnormal{Var}[N_R]\leq c_1^2R\sum_{s=2}^Rs^2[(d_1-1)(d_2-1)]^{R-s+2}\textnormal{Var}[C_s]=O([(d_1-1)(d_2-1)]^{R+2}).
\end{align*}
Then from Markov's inequality, together with our assumptions \eqref{eq:assumption1}-\eqref{eq:assumption3},
\begin{align*}
    \mathbb P\left(\frac{n-|\tau_1|}{n}>n^{-1/4}\right)&=\mathbb P( n-|\tau_1| >n^{3/4}) \leq \mathbb P(N_R\geq n^{3/4})\\
    &\leq \frac{\mathbb E[N_R^2]}{n^{3/2}}=O([(d_1-1)(d_2-1)]^{2R+2}n^{-3/2})=o(n^{-5/4}).
\end{align*}
\end{proof}

We now state our main result in this section. The proof is based on the moment method and the tree approximation of local neighborhoods, which were previously applied to random regular graphs  in \cite{dumitriu2012sparse}. 
\begin{theorem}\label{thm:globallaw}
Let $G_n$ be a sequence of  random $(d_1,d_2)$-biregular bipartite graph. Under assumptions \eqref{eq:assumption1}-\eqref{eq:assumption3},  the empirical spectral distribution of $\frac{XX^{\top}-d_1I}{\sqrt{(d_1-1)(d_2-1)}}$ converges weakly to the semicircle law almost surely.
\end{theorem}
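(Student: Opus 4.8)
The plan is to use the moment method together with the local tree approximation supplied by Lemma \ref{taubound}. Write $\widetilde M:=\frac{XX^{\top}-d_1I}{\sqrt{(d_1-1)(d_2-1)}}$ and let $m_k$ be the $k$-th moment of the standard semicircle law on $[-2,2]$ (so $m_{2l}=C_l$, the $l$-th Catalan number, and $m_{2l+1}=0$). Since that law is determined by its moments and the $m_k$ satisfy Carleman's condition, it suffices to prove that for every fixed $k\ge 1$,
\[
\frac1n\operatorname{Tr}(\widetilde M^{\,k})=\frac1n\sum_{v\in V_1}(\widetilde M^{\,k})_{vv}\longrightarrow m_k\qquad\text{almost surely};
\]
almost-sure weak convergence of the empirical spectral distribution to the semicircle law then follows in the usual way.

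The first step is to localize the diagonal entries. Expand the monomial $x^k$ in the basis $\{p_l\}_{l\ge0}$ of \eqref{eq:pk} (with $p_0:=1$; each $p_l$ is monic of degree $l$, so this is a genuine basis), $x^k=\sum_{l=0}^k\gamma_l^{(k)}p_l(x)$. By the deterministic identity \eqref{eq:pkk}, $p_l(\widetilde M)=[(d_1-1)(d_2-1)]^{-l/2}A^{(l)}$, where $A^{(l)}$ counts non-backtracking walks of length $2l$ between vertices of $V_1$; hence $(p_l(\widetilde M))_{vv}$ depends only on the induced subgraph on the $l$-ball around $v$, and it vanishes for $1\le l\le k$ whenever the $k$-neighborhood of $v$ is cycle-free (a closed non-backtracking walk of positive length cannot live inside a tree). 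Thus $(\widetilde M^{\,k})_{vv}=\gamma_0^{(k)}$ for every such $v$ — the value the operator would take on the infinite $(d_1,d_2)$-biregular tree. Applying Lemma \ref{taubound} with $R=k$: with probability $1-o(n^{-5/4})$ all but at most $n^{3/4}$ vertices of $V_1$ have a tree-like $k$-neighborhood, and this bad event being summable, by Borel–Cantelli it holds for all large $n$ almost surely. On this event,
\[
\frac1n\operatorname{Tr}(\widetilde M^{\,k})=\frac{|\tau_1|}{n}\,\gamma_0^{(k)}+\frac1n\sum_{v\notin\tau_1}(\widetilde M^{\,k})_{vv},\qquad \Bigl|\,\frac1n\sum_{v\notin\tau_1}(\widetilde M^{\,k})_{vv}\Bigr|\le n^{-1/4}\,\|\widetilde M\|^k .
\]
Since $\|\widetilde M\|=\lambda_1(\widetilde M)=\frac{d_1(d_2-1)}{\sqrt{(d_1-1)(d_2-1)}}\le d_1=n^{o(1)}$ by \eqref{eq:assumption2}, the error term is $n^{-1/4+o(1)}\to0$ and $|\tau_1|/n\to1$, so $\frac1n\operatorname{Tr}(\widetilde M^{\,k})\to\lim_n\gamma_0^{(k)}$.

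It remains to compute $\lim_n\gamma_0^{(k)}$. From $p_l(x)=U_l(x/2)-\tfrac1{d_1-1}U_{l-2}(x/2)$ and the recursion for $U_l$ one checks
\[
x\,p_0=p_1,\qquad x\,p_1=p_2+\tfrac{d_1}{d_1-1}\,p_0,\qquad x\,p_l=p_{l+1}+p_{l-1}\ \ (l\ge2),
\]
so $\gamma_0^{(k)}=(J^k)_{00}$ where $J$ is the half-line Jacobi matrix with all off-diagonal entries $1$ except the entry coupling $p_0$ and $p_1$, which equals $\tfrac{d_1}{d_1-1}$. Equivalently $\gamma_0^{(k)}$ is the number of length-$k$ closed walks of the simple walk on $\mathbb Z_{\ge0}$ from $0$, each weighted by $(\tfrac{d_1}{d_1-1})^{\#\{\text{steps }1\to0\}}$: for $k$ odd there are none, so $\gamma_0^{(k)}=0=m_k$, while for $k=2l$ there are $C_l$ Dyck paths and every weight tends to $1$ as $d_1\to\infty$ by \eqref{eq:assumption1}, whence $\gamma_0^{(2l)}\to C_l=m_{2l}$. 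Together with the previous paragraph this gives $\frac1n\operatorname{Tr}(\widetilde M^{\,k})\to m_k$ a.s.\ for every $k$, proving Theorem \ref{thm:globallaw}.

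I expect the main obstacle to be controlling the contribution of the $o(n)$ non–tree-like vertices: this is exactly where one needs sub-polynomial degree growth (the crude bound $\|\widetilde M\|=n^{o(1)}$ from \eqref{eq:assumption2}), and it is also why the locally tree-like picture — and hence the semicircle rather than a Marchenko–Pastur–type law — is available only in the regime \eqref{eq:assumption1}–\eqref{eq:assumption3}. A more self-contained alternative is a direct moment computation as in \cite{dumitriu2012sparse}: expand $\operatorname{Tr}((XX^{\top}-d_1I)^k)$ over closed bipartite walks of length $2k$ that are non-backtracking at the $V_2$-steps, group by the isomorphism type of the spanned multigraph, and use Proposition \ref{eq:propH} and Lemma \ref{lem:Mckay} to discard walks whose span contains a cycle or a thrice-used edge (each such feature costs a factor $n$ but saves only $n^{o(1)}$); the surviving doubled-tree walks have all $V_2$-vertices of degree $2$ (a $V_2$-leaf would force a forbidden backtrack) and are counted by $C_{k/2}$, while \eqref{eq:assumption3} eliminates the competing trees carrying a higher-degree $V_2$-vertex, whose normalized contribution is $O((d_2/d_1)^{c})\to0$ for some $c>0$; one then needs in addition a second-moment bound $\operatorname{Var}(\tfrac1n\operatorname{Tr}(\widetilde M^{\,k}))=n^{-2+o(1)}$, again summable. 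The bookkeeping in that version is the tedious part, which is why I would write the spectral-identity argument above.
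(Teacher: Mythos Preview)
Your argument has a genuine gap: it never uses assumption \eqref{eq:assumption3} ($d_1/d_2\to\infty$), yet this assumption is essential --- when $d_1,d_2\to\infty$ with $d_1/d_2\to\alpha<\infty$ the limit of the ESD of $\widetilde M$ is a shifted Mar\v{c}enko--Pastur law, not the semicircle. Concretely, your Jacobi matrix $J$ depends only on $d_1$, so your proof would give the semicircle whenever $d_1\to\infty$, regardless of $d_2$; this is false.

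The error is in the step ``$(\widetilde M^{\,k})_{vv}=\gamma_0^{(k)}$ for tree-like $v$''. You combine (i) the identity \eqref{eq:pkk}, $p_l(\widetilde M)=[(d_1-1)(d_2-1)]^{-l/2}A^{(l)}$, with (ii) $A^{(l)}_{vv}=0$ on a tree. But these are incompatible for $d_2\ge3$: a direct count on the infinite $(d_1,d_2)$-biregular tree gives
\[
(\widetilde M^{\,3})_{vv}=\frac{d_1(d_2-1)(d_2-2)}{[(d_1-1)(d_2-1)]^{3/2}}\neq 0,
\]
whereas your recipe yields $\gamma_0^{(3)}=0$. (The triples $(v,w,u)$ all adjacent to a common $V_2$-vertex contribute: the diagonal of $M$ vanishes, so the walks in $(M^3)_{vv}$ are non-backtracking only at the $V_1$-steps, not at the $V_2$-steps.) The underlying issue is that the recursion stated for $A^{(2)}$ just above \eqref{eq:Ak2} is itself off by a term $(d_2-2)A^{(1)}$; the correct relation is $A^{(2)}=(A^{(1)})^2-(d_2-2)A^{(1)}-d_1(d_2-1)I$, and this extra term is exactly what injects the $d_2$-dependence your Jacobi matrix is missing. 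So you cannot simultaneously have \eqref{eq:pkk} and the vanishing of $A^{(l)}_{vv}$ on trees.

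The paper's own proof does essentially what you sketch as your ``alternative'': it expands $\operatorname{tr}(M^k)$ over closed bipartite walks that are non-backtracking only at $V_2$-steps, restricts to tree-like roots via Lemma~\ref{taubound}, and then does a direct combinatorial classification of such walks on the tree by the numbers $(r,c)$ of distinct $V_1$- and $V_2$-vertices. The assumption $d_1/d_2\to\infty$ enters precisely to kill the contributions with $c<k/2$ (in particular the odd-moment terms like the one above), leaving only the doubled-tree walks with every $V_2$-vertex of degree $2$, which are counted by $C_{k/2}$. Your second paragraph already anticipates this mechanism correctly; it is the spectral-identity shortcut in the first paragraph that fails.
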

\begin{remark}
Recall in \cite{dumitriu2016marvcenko}, when the ratio $d_1/d_2\geq 1$ converges to a positive constant, the ESD of $\frac{XX^{\top}}{d_1}$ converges to Mar\v{c}enko-Pastur law. With different scaling parameters, we obtain a different semicircle law when $d_1/d_2\to\infty$. This can be seen as an analog of the semicircle law for sample covariance matrices proved in \cite{bai1988convergence} when the aspect ratio is unbounded.
\end{remark}

\begin{proof}[Proof of Theorem \ref{thm:globallaw}]
Note that for all $i\in V_1$, by the degree constraint,
\begin{align}\label{eq:diagonal}
(XX^{\top})_{ii}=\sum_{j}X_{ij}X_{ji}=\sum_{j}X_{ij}=\deg (i)=d_1.
\end{align}
	Denote $M=\frac{XX^{\top}-d_1I}{\sqrt{(d_1-1) (d_2-1)}}$. We start with the trace expansion of $M$. 
	\begin{align}
	\frac{1}{n}\textnormal{tr} M^{k}&=\frac{1}{n((d_1-1)(d_2-1))^{k/2}}\textnormal{tr}(XX^{\top}-d_1I)^k \notag\\
	&=	\frac{1}{n((d_1-1)(d_2-1))^{k/2}}\sum_{\substack{ i_1,\dots, i_k\in [n]\\ i_1\not=i_2,\dots, i_{k}\not=i_1\\ j_1,\dots, j_k\in [m]}}X_{i_1j_1}X_{i_2j_1}\cdots X_{i_{k}j_{k}}X_{i_1j_{k}}.\label{eq:traceexpansion}
	\end{align}
	From \eqref{eq:diagonal}, the diagonal entries of $XX^T-d_1I$ are 0, therefore we have the constraint that $i_1\not=i_2,\dots, i_k\not=i_1$ in \eqref{eq:traceexpansion}.
	
	Let $A_k^{r,c}(v,v)$ be the number  of all
	closed  walks of  length $2k$ in $G$ starting from $v\in V_1$ that use $r$ distinct vertices from $V_1$, $c$ distinct vertices from $V_2$, with the restriction that $i_1\not=i_2,\dots, i_{k}\not=i_1$. We have  $r\leq k+1$ and $c\leq k$, since there are at most  $k+1$ vertices in $V_1$ and $k$ vertices in $V_2$ that are visited in one closed walk of length $2k$.  From \eqref{eq:traceexpansion}, the $k$-th moment of the empirical spectral distribution $\mu_n$  satisfies
\begin{align}
	\int x^k d\mu_n(x)=\frac{1}{n}\textnormal{tr} M^{k}=\frac{1}{n((d_1-1)(d_2-1))^{k/2}}\sum_{v\in V_1}\sum_{r=1}^{k+1}\sum_{c=1}^k A_k^{r,c}(v,v).\label{eq:momentcount}
\end{align}
Since $d_1\geq d_2$, for any fixed $v\in V_1$, we have  
\[\sum_{r\leq k+1,c\leq k} A_k^{r,c}(v,v)\leq d_1^{2k}.\]
For ease of notation,  in the following equations, we often omit the range of $r,c$ in the summation.

We may decompose the sum in \eqref{eq:momentcount} into two parts depending on whether $v\in \tau_1$ or not.
For any  $v\in\tau_1$, we write
$A_k^{r,c}=:A_k^{r,c}(v,v)$ since all neighborhoods of $v\in \tau_1$ of radius $k$ looks the same and the number of such closed walks is independent of $v$. 
Now we have the following upper bound on \eqref{eq:momentcount}:  
\begin{align*}
	\int x^k d\mu_n(x) &\leq \frac{1}{n((d_1-1)(d_2-1))^{k/2}}\sum_{v\in\tau_1}\sum_{r,c}A_k^{r,c}(v,v)+ \frac{(n-|\tau_1|) d_1^{2k}}{n((d_1-1)(d_2-1))^{k/2}}\\
	&=\frac{|\tau_1|}{n((d_1-1)(d_2-1))^{k/2}} \sum_{r,c}A_k^{r,c}+\frac{(n-|\tau_1|) d_1^{2k}}{n((d_1-1)(d_2-1))^{k/2}}\\
	&\leq \frac{1}{((d_1-1)(d_2-1))^{k/2}} \sum_{r,c}A_k^{r,c}+\frac{(n-|\tau_1|) d_1^{2k}}{n((d_1-1)(d_2-1))^{k/2}}.
\end{align*}
Similarly, a lower bound holds by only counting closed walks starting with vertices in $\tau_1$:
\begin{align*}
	\int x^k d\mu_n(x)\geq & \frac{1}{n((d_1-1)(d_2-1))^{k/2}}\sum_{v\in \tau_1}\sum_{r}\sum_{c} A_k^{r,c}(v,v) 
	= \frac{|\tau_1|}{n((d_1-1)(d_2-1))^{k/2}}\sum_{r}\sum_{c} A_k^{r,c}.
\end{align*}

From Lemma \ref{taubound} and assumption \eqref{eq:assumption2}, with probability at least $1-o(n^{-5/4})$, for any fixed $k\geq 0$,
\[ \frac{(n-|\tau_1|)}{n((d_1-1)(d_2-1))^{k/2}} d_1^{2k}=o(1), \quad\text{and} \quad  \frac{|\tau_1|}{n}=1-o(n^{-1/4}).\] To show the almost sure convergence of the empirical measure to semicircle law, by the upper and lower bounds above, it suffices to show 
\begin{align}\label{catalan}
	\lim_{n\to\infty}\frac{1}{((d_1-1)(d_2-1))^{k/2}}\sum_{r,c} A_k^{r,c} =\begin{cases}
	0 & \text{ if $k$ is odd,}\\
	C_{k/2}  &\text{ if $k$ is even,}
	\end{cases}
\end{align}
where $C_k:=\frac{1}{k+1}{\binom{2k}{k}}$ is the $k$-th Catalan number.

Recall $A_{k}^{r,c}$ counts the closed walks of length $2k$ on a rooted $(d_1,d_2)$-biregular tree starting from a root with degree $d_1$, ending at the same  root. Now we consider the quantity 
\[\displaystyle
	 \frac{1}{((d_1-1)(d_2-1))^{k/2}}\sum_{r,c} A_{k}^{r,c}\]
  more carefully. We first consider possible ranges of $r$ and $c$ in the expression above. 
  
  The walk $(i_1,j_1,i_2,j_2,\dots,i_k,j_k,i_1)$  in the summation satisfies $i_1\not=i_2,\cdots,i_{k-1}\not=i_k, i_k\not=i_1.$
This implies when a walk goes from $i_t$ to $j_t$ for some $t$, it cannot backtrack immediately to $i_t$. Namely, any such walk is not allowed to backtrack at even depths (here, we define the depth of the root in a tree as 1). To have a closed walk of length $2k$ on a tree, each edge is repeated at least twice, so the number of distinct edges is at most $k$. Therefore the number of distinct vertices satisfies 
\begin{align}\label{eq:r+c}
    r+c\leq k+1.
\end{align}  
 
 For fixed $r$ and $c$, the number of such unlabeled rooted trees with $r+c-1$ distinct edges is  $C_{r+c-1}$.  Let $I$ be the set of vertices in the odd depths of the biregular tree and $J$ be the set of vertices in the even depths. Since the first vertex of the walk is fixed (we always start from the fixed root), for any closed walk, there are at most $d_1^c$ many ways to choose distinct vertices from $J$ and $d_2^{r-1}$ many ways to choose distinct vertices from $I$.
 Therefore we have
\begin{align}\label{eq:arc}
A_k^{r,c}\leq d_1^{c} d_2^{r-1}C_{r+c-1}\leq d_1^cd_2^{r-1}C_k,
\end{align}
where the last inequality is from \eqref{eq:r+c}.
We also know that $r-1\geq c$, because whenever  a new vertex in $J$ is reached by the walk, the walk cannot backtrack, so it must reach a new vertex in $I$. Therefore we have 
$$c\leq r-1 \quad \text{and}\quad   r+c\leq k+1,$$ which implies the following conditions on $c$ and $r$:
\begin{align}\label{eq:range}
c\leq k/2 \quad \text{and}\quad  r-1\leq k-c.	
\end{align}

From \eqref{eq:arc}, for any $(r,c)$ satisfying \eqref{eq:range}, the following holds:
\begin{align}
	\frac{A_k^{r,c}}{((d_1-1)(d_2-1))^{k/2}}\leq \frac{d_1^{c}}{(d_1-1)^{k/2}}\frac{d_2^{r-1}}{(d_2-1)^{k/2}}C_k\leq \frac{d_1^{c}}{(d_1-1)^{k/2}}\frac{d_2^{k-c}}{(d_2-1)^{k/2}}C_k.\end{align}

 Now we discuss two cases depending on the parity of $k$.
 When $k$ is odd, from \eqref{eq:range}, $c\leq \frac{k-1}{2}$.  
 Since $d_1/d_2\to\infty$, we obtain 
 \begin{align} \label{eq:odd} 
	  \frac{1}{((d_1-1)(d_2-1))^{k/2}}\sum_{r,c} A_{k}^{r,c}\leq & \left(\frac{d_1}{d_2}\right)^c\frac{d_2^kC_k}{[(d_1-1)(d_2-1)]^{k/2}}
	  \leq  \left(\frac{d_1}{d_2}\right)^{c-k/2} 2^k C_k=o(1).
\end{align}

 When $k$ is even, to have a non-vanishing term in the limit for $A_k^{r,c}$, we must have $c=k/2$ and $r=k/2+1$. Then we have 
 \begin{align}\label{eq:leadingterm}
	 \frac{1}{((d_1-1)(d_2-1))^{k/2}}\sum_{r,c} A_{k}^{r,c}=\frac{1}{((d_1-1)(d_2-1))^{k/2}}	A_k^{k/2+1, k/2}+o(1).
\end{align} 

We  continue our proof with a  more refined estimate on $A_k^{k/2+1,k/2}$. Since every  edge is repeated exactly twice in the closed walk, it's a depth-first search on the biregular tree. 

If the root is at level $1$, and subsequent vertices are at a level $i+1$ where $i$ is the distance from the root, then all leaves must be at odd levels, since we can never backtrack at an even level. This implies that every vertex at an even level has at least one child, which means $r \geq c+1$, with equality if and only if every vertex at an even level has \emph{exactly} one child. Thus, one can see the tree as a subdivision of a smaller tree, where a vertex has been introduced on each edge (the ``new" vertices being the vertices on an even level in the bigger tree). This is a bijection between the kind of planar rooted tree on $k+1$ vertices we are trying to count and the set of all planar rooted trees on $k/2+1$ vertices. There are $C_{k/2}$ of the latter. See Figure  \ref{fig:depthfirst} for an example of a valid closed walk and an illustration of the aforementioned bijection.

\begin{figure}[ht]
    \centering
    \includegraphics{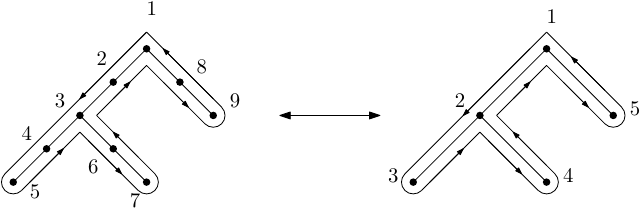}
    \caption{On the left we have a closed walk $(1,2,3,4,5,4,3,6,7,6,3,2,1,8,9,8,1)$ on a rooted planar tree which only backtracks at odd depths, and the tree has no new branches at any even depth along the walk. Its correspondent under the bijection is the closed walk  $(1,2,3,2,4,2,1,5,1)$ on the smaller rooted planar tree induced by the depth-first search on the right.}
    \label{fig:depthfirst}
\end{figure}

Moreover, given a fixed root with a  vertex label, the number of all possible ways to label the tree with vertices in a biregular bipartite graph is between $d_1^{k/2}(d_2-1)^{k/2}$ and $(d_1-k/2)^{k/2} (d_2-1)^{k/2}$, so  the following inequality for $A_{k}^{k/2+1, k/2}$ holds:
\begin{align}\label{eq:AKKKKK}
	(d_1-k/2-1)^{k/2} (d_2-1)^{k/2}C_{k/2}\leq A_{k}^{k/2+1, k/2}\leq d_1^{k/2}(d_2-1)^{k/2}C_{k/2}.
\end{align}
From \eqref{eq:leadingterm} and \eqref{eq:AKKKKK}, we obtain for even $k$,
\begin{align}\label{eq:even}
	\lim_{n\to\infty}\frac{1}{((d_1-1)(d_2-1))^{k/2}}\sum_{r,c} A_{k}^{r,c}=C_{k/2}.
\end{align}
 With \eqref{eq:odd} and \eqref{eq:even},  the asymptotic behavior of moments given in  \eqref{catalan} holds. This completes the proof of Theorem \ref{thm:globallaw}.
\end{proof}



\section{Random regular hypergraphs}\label{sec:hypergraph}

We first include some definitions for  hypergraphs and describe a bijection between a subset of biregular bipartite graphs and the set of regular hypergraphs studied in \cite{dumitriu2019spectra}.
We will use the map given in Definition \ref{def:incidence} to apply some of our results for RBBGs to random regular hypergraphs, see \cite{dumitriu2019spectra} for more details. 

\begin{definition}[hypergraph]
	A \textit{hypergraph} $H$ consists of a set $V$ of vertices and a set $E$ of hyperedges such that each hyperedge is a nonempty set of $V$.  A hypergraph $H$ is \textit{$k$-uniform} for an integer $k\geq 2$ if every hyperedge $e\in E$ contains exactly $k$ vertices. The \textit{degree} of $i$, denoted   $\deg(i)$, is the number of all hyperedges incident to $i$.
 A hypergraph is \textit{$d$-regular} if all of its vertices have degree $d$.
A hypergraph is \textit{$(d,k)$-regular} if it is both $d$-regular and $k$-uniform. 
\end{definition}

\begin{definition}[incidence matrix and associated bipartite graph]\label{def:incidence}
 A vertex $i$ is \textit{incident} to a hyperedge $e$ if and only $v$ is an element of $e$. We can define the \textit{incidence matrix} $X$ of a hypergraph $H=(V,E)$ to be a $|V|\times |E|$ matrix indexed by elements in $V$ and $E$ such that $X_{i,e}=1$ if $i\in e$ and $0$ otherwise. Moreover, if we regard $X$ as the adjacency matrix of a graph, it defines a bipartite graph $G$ with two vertex sets  $V$ and $E$. We call $G$  the  \textit{bipartite graph associated to $H$}, given by a map $\Phi$ (so $\Phi(H)=G$).
See Figure \ref{fig:bijection} for an example.
 \end{definition}
 
 \begin{figure}[ht]
\includegraphics{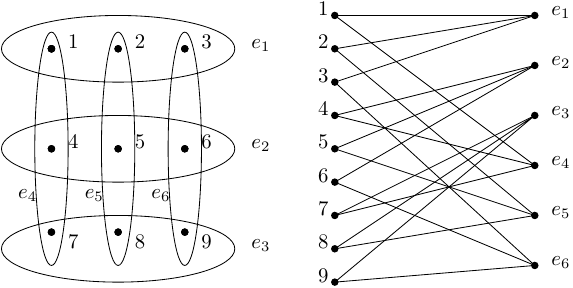}
\caption{a $(2,3)$-regular hypergraph and its associated biregular bipartite graph where all vertices in $V_2$ have different neighborhoods in $V_1$}
\label{fig:bijection}
\end{figure}

\begin{definition}[adjacency matrix]
For  a hypergraph $H$ with $n$ vertices, we associate a $n\times n$ symmetric  matrix $A$ called the \textit{adjacency matrix} of $H$. For $i\not=j$, we define $A_{ij}$ as the number of hyperedges containing both $i$ and $j$; we define $A_{ii}=0$ for all $1\leq i\leq n$. When the hypergraph is $2$-uniform (i.e., it is a graph), this is the usual definition for the adjacency matrix of a graph.
\end{definition}
 	
The following lemma connects the adjacency matrix of a regular hypergraph with its associated biregular bipartite graph. It formally appears in \cite{sole1996spectra,dumitriu2019spectra}, and it is also informally mentioned in \cite{feng1996spectra}.

\begin{lemma}[Lemma 4.5 in \cite{dumitriu2019spectra}] \label{lem:matrixcorre}
Let $H$ be a $(d_1,d_2)$-regular hypergraph, and let $G$ be the corresponding $(d_1,d_2)$-biregular bipartite graph. Let $A_H$ be the adjacency matrix of $H$ and $A_G$ be the adjacency matrix of $G$ given by
\begin{align}\label{MatrixB}
A_G=\begin{pmatrix}
	0 & X\\
	X^{\top}& 0
\end{pmatrix}.
\end{align} Then
	$A_H=XX^{\top}-d_1I$.
\end{lemma}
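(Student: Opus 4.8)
The plan is to verify the claimed identity $A_H = XX^{\top} - d_1 I$ directly, entry by entry, splitting into the off-diagonal and diagonal cases. First I would recall from Definition \ref{def:incidence} that the incidence matrix $X \in \{0,1\}^{|V|\times |E|}$ satisfies $X_{i,e}=1$ exactly when the vertex $i$ lies in the hyperedge $e$, and that this same $X$ is the off-diagonal block of the adjacency matrix $A_G$ of the associated bipartite graph $G=\Phi(H)$. Then for any $i,j\in V$,
\[
(XX^{\top})_{ij}=\sum_{e\in E}X_{i,e}X_{j,e}=\#\{\,e\in E:\ i\in e\ \text{and}\ j\in e\,\},
\]
i.e.\ the number of hyperedges incident to both $i$ and $j$.

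For the off-diagonal entries $i\neq j$, this count is by definition precisely $(A_H)_{ij}$ (the adjacency matrix of $H$ records the number of common hyperedges), while $(d_1 I)_{ij}=0$; hence $(XX^{\top}-d_1 I)_{ij}=(A_H)_{ij}$. For the diagonal entries we use $X_{i,e}^2=X_{i,e}$ to get $(XX^{\top})_{ii}=\sum_{e\in E}X_{i,e}=\deg(i)$, which equals $d_1$ since $H$ is $d_1$-regular; therefore $(XX^{\top}-d_1 I)_{ii}=d_1-d_1=0=(A_H)_{ii}$. Combining the two cases yields $A_H=XX^{\top}-d_1 I$ as an equality of symmetric $n\times n$ matrices.

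This argument is essentially a one-line computation, so I do not expect a genuine obstacle; the only thing that requires care is the bookkeeping of the two roles of $X$ — as the $|V|\times|E|$ incidence matrix of $H$ and as the block of $A_G$ — which are consistent precisely because $\Phi$ is set up so that the columns of $X$ are the indicator vectors of the (distinct) hyperedges. I would also remark that only $d_1$-regularity of $H$ is used, and solely in the diagonal step; the $k$-uniformity (equivalently, $d_2$-regularity on the hyperedge side) is not needed for the identity itself, but it is what guarantees that the bipartite graph $G$ is $(d_1,d_2)$-biregular, so that the results of Sections \ref{sec:cycle}--\ref{sec:globallaw} apply to $A_H$.
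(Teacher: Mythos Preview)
Your argument is correct: the entry-by-entry verification via $(XX^{\top})_{ij}=\#\{e\in E: i,j\in e\}$, together with $d_1$-regularity for the diagonal, is exactly the standard proof. The present paper does not actually supply its own proof of this lemma---it is quoted as Lemma~4.5 of \cite{dumitriu2019spectra}---so there is nothing to compare against beyond noting that your computation is the natural one and matches what that reference does.
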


\begin{definition}[walks and cycles]\label{cycledefinition}
A \textit{walk} of length $l$ on a hypergraph $H$ is a vertex-hyperedge sequence $(i_0,e_1,i_1,\cdots ,e_l, i_l)$ such that $i_{j-1}\not=i_{j}$ and $\{i_{j-1},i_j\}\subset e_j$ for all $1\leq j\leq l$. A walk is closed if $i_0=i_l$. A cycle of length $l$ in a hypergraph $H$ is a closed walk $(v_0,e_1,\dots, v_{l-1}, e_l,v_{l+1})$ such that 
all edges are distinct and all vertices are distinct subject to $v_{l+1}=v_0$. In the associated bipartite graph $G$, a cycle of length $2l$ corresponds to a cycle of length $l$ in $H$.
\end{definition}

Let $\mathcal G(n,m,d_1,d_2)$ be the set of all  simple biregular bipartite random graphs  with vertex set $V=V_1\cup V_2$ such that  $|V_1|=n, |V_2|=m$, and every vertex in $V_i$ has degree $d_i$ for $i=1,2$.  Without loss of generality, we assume $d_1\geq d_2$. Let $\mathcal H (n,d_1,d_2)$ be the set of all simple (without multiple hyperedges) $(d_1,d_2)$-regular hypergraphs with labeled  {vertex set $[n]$ and $\frac{nd_1}{d_2}$ many labeled hyperedges denoted by $\{e_1,\dots,e_{nd_1/d_2}\}$.} 

\begin{remark}
We  can also consider all $(d_1,d_2)$-regular hypergraphs with labeled vertices and unlabeled hyperedges. Since all hyperedges are distinct, any such regular hypergraph with unlabeled hyperedges corresponds to $(nd_1/d_2)!$ regular hypergraphs with labeled hyperedges. 
\end{remark}

\begin{figure}[ht]
	\includegraphics[width=4cm]{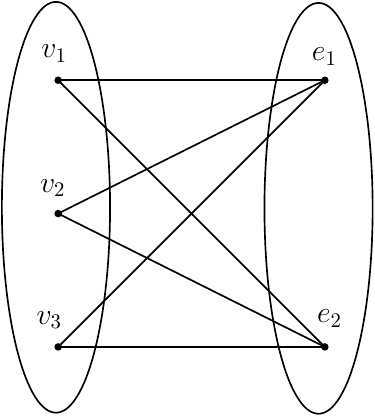}
	\caption{a subgraph in a  biregular bipartite graph which gives  multiple hyperedges $e_1$ and $e_2$ in the corresponding regular hypergraph}\label{fig:forbidden}
\end{figure}

It is well known (see for example \cite{feng1996spectra}) that the map $\Phi$ defined can be extended to a bijection $\tilde{\Phi}$ between labeled regular multi-hypergraphs and biregular bipartite graphs.  See  Figure \ref{fig:bijection} as an example of the bijection. For a given biregular bipartite graph, if there are two vertices in $V_2$ that have the same set of neighbors in $V_1$, the corresponding regular hypergraph will have  multiple hyperedges, see Figure \ref{fig:forbidden}.  Let $\mathcal G'(n,m,d_1,d_2)$ be a subset of $\mathcal G(n,m,d_1,d_2)$ such that for any $G\in  \mathcal G'(n,m,d_1,d_2)$, any two vertices in $V_2$ have different  neighborhoods in $V_1$. The following lemma holds.

\begin{lemma}[Lemma 4.2 in \cite{dumitriu2019spectra}]\label{bijection}
	$\Phi$ is the restriction of the bijection $\tilde{\Phi}$ to  $\mathcal H(n,d_1,d_2)$  and its image is $\mathcal G'\left(n, m,d_1,d_2\right)$. Hence $|\mathcal H(n,d_1,d_2)|=|\mathcal G'\left(n, m,d_1,d_2\right)|$.
\end{lemma}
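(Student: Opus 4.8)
The plan is to read the statement off the already-established fact, cited from \cite{feng1996spectra}, that $\tilde\Phi$ is a bijection between labeled regular multi-hypergraphs (on vertex set $[n]$, with hyperedge set $\{e_1,\dots,e_m\}$, $m=nd_1/d_2$) and $\mathcal G(n,m,d_1,d_2)$, by simply identifying which multi-hypergraphs are \emph{simple} in terms of their images under $\tilde\Phi$.

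First I would recall the explicit form of $\tilde\Phi$. To a labeled $(d_1,d_2)$-regular multi-hypergraph $H$ it assigns the bipartite graph $G=\tilde\Phi(H)$ on $V_1=[n]$ and $V_2=\{e_1,\dots,e_m\}$ whose $V_1\times V_2$ incidence is exactly the vertex--hyperedge incidence matrix $X$ of $H$ from Definition \ref{def:incidence}; conversely, given $G\in\mathcal G(n,m,d_1,d_2)$ one reconstructs $H$ by declaring the $j$-th hyperedge $e_j$ to be the neighborhood $N_G(e_j)\subseteq V_1$ of the $j$-th vertex of $V_2$. This reconstruction lands in the class of $(d_1,d_2)$-regular multi-hypergraphs because $d_1$-regularity of $H$ corresponds to degree $d_1$ for all $V_1$-vertices, $d_2$-uniformity of $H$ corresponds to degree $d_2$ for all $V_2$-vertices, and each hyperedge is a nonempty vertex set since $d_2\geq 2$. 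Both vertices and hyperedges being labeled is what makes $\tilde\Phi$ a genuine bijection of labeled objects rather than merely a bijection up to relabeling.

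The one substantive observation is this: for $G=\tilde\Phi(H)$ and any index $j$, the set $N_G(e_j)\subseteq V_1$ is, as a subset of $[n]$, precisely the hyperedge $e_j$ itself. Hence for $i\neq j$ we have $e_i=e_j$ as vertex sets --- that is, $H$ has a repeated hyperedge --- if and only if $N_G(e_i)=N_G(e_j)$, i.e.\ the two vertices of $V_2$ have the same neighborhood in $V_1$. Consequently $H$ is simple, i.e.\ $H\in\mathcal H(n,d_1,d_2)$, if and only if $\tilde\Phi(H)\in\mathcal G'(n,m,d_1,d_2)$. Restricting the bijection $\tilde\Phi$ to $\mathcal H(n,d_1,d_2)$ therefore gives an injection whose image is exactly $\mathcal G'(n,m,d_1,d_2)$, and this restriction is literally the map $\Phi$ (the incidence-matrix map of Definition \ref{def:incidence}, now applied only to simple hypergraphs). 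Thus $\Phi\colon\mathcal H(n,d_1,d_2)\to\mathcal G'(n,m,d_1,d_2)$ is a bijection, so in particular $\lvert\mathcal H(n,d_1,d_2)\rvert=\lvert\mathcal G'(n,m,d_1,d_2)\rvert$, which is the claim.

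There is no deep obstacle here; the whole argument is careful bookkeeping. The only points that require attention are: (i) keeping straight which data is labeled on each side, so that ``bijection'' is correct as stated; (ii) verifying that the reconstruction direction really produces a $(d_1,d_2)$-regular multi-hypergraph (nonemptiness of hyperedges, correct uniformity and regularity from the degree constraints); and (iii) confirming that ``simple hypergraph'' in the sense used here --- no two hyperedges equal as vertex sets --- is exactly the complement of the defining condition of $\mathcal G'$, which is immediate from the observation that $N_G(e_j)=e_j$.
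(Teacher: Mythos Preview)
Your proposal is correct and matches the paper's own reasoning. The paper does not give a formal proof of this lemma---it is cited from \cite{dumitriu2019spectra}---but the paragraph immediately preceding the lemma statement sketches exactly your argument: $\tilde\Phi$ is a bijection between labeled regular multi-hypergraphs and $\mathcal G(n,m,d_1,d_2)$, and a bipartite graph corresponds to a hypergraph with repeated hyperedges precisely when two vertices in $V_2$ share the same neighborhood in $V_1$ (illustrated by Figure~\ref{fig:forbidden}). Your write-up simply makes this explicit.
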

 From Lemma \ref{bijection}, the uniform distribution on $\mathcal G' \left(n,m,d_1,d_2\right)$ for biregular bipartite graphs  induces the uniform distribution  on $\mathcal H (n, d_1,d_2)$ for regular hypergraphs. With this observation, we can translate some of the results for spectra of random biregular bipartite graphs into results for spectra of random regular hypergraphs. A similar approach was applied in \cite{blinovsky2016asymptotic} to enumerate uniform hypergraphs with given degrees.

\begin{lemma}[Lemma 4.8 in  \cite{dumitriu2019spectra}]\label{lem:RHRG}
Let $G$ be a random biregular bipartite graph sampled uniformly from $\mathcal G\left(n,m,d_1,d_2\right)$ such that $
3\leq d_2\leq d_1\leq \frac{n}{32}$.  Let $\mathcal G'\left(n,m,d_1,d_2\right)$ be the set of biregular bipartite graphs corresponding to simple regular hypergraphs.
Then   
\begin{align}\label{eq:low}
  \mathbb P\left(G\in \mathcal G'\left(n,m,d_1,d_2\right) \right)\geq 1-\left(\frac{nd_1}{d_2}\right)^2\left(\frac{4ed_2}{n}\right)^{d_2}.  
\end{align} 
In particular, 
\begin{align}\label{eq:low1}
  \mathbb P\left(G\in \mathcal G'\left(n,m,d_1,d_2\right) \right)=1-O\left( \frac{d_1^2}{nd_2^2}\right).  
\end{align}
\end{lemma}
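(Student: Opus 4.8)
The final statement to prove is Lemma 4.8 from \cite{dumitriu2019spectra}, namely the lower bound \eqref{eq:low} on the probability that a uniformly random biregular bipartite graph $G$ lies in $\mathcal G'(n,m,d_1,d_2)$, i.e.\ that no two vertices of $V_2$ share the same neighborhood in $V_1$, together with the simplified consequence \eqref{eq:low1}.

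\textbf{Plan of proof.} The strategy is a union bound over all pairs of vertices in $V_2$ that could be ``bad,'' i.e.\ have identical neighborhoods. Fix two vertices $u,v\in V_2$. The event that $N(u)=N(v)$ means the $d_2$-subset of $V_1$ chosen as the neighborhood of $u$ coincides with that of $v$. The core computation is to bound $\mathbb P(N(u)=N(v))$. I would do this via a switching argument (consistent with the rest of Section~\ref{sec:cycle}), or more directly by comparing the number of biregular bipartite graphs with a prescribed ``collision'' at $(u,v)$ to the total count $|\mathcal G(n,m,d_1,d_2)|$. Concretely: condition on the neighborhood $S = N(u)$, a $d_2$-subset of $V_1$; then estimate the conditional probability that $N(v)=S$ as well. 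Given the configuration model / uniform model heuristics, each of the $d_2$ edges at $v$ must land in the $d_2$-set $S$; a switching that moves an edge of $v$ out of $S$ to a generic vertex of $V_1$ shows that the ratio of bad configurations to good ones is at most roughly $\left(\frac{d_2}{n}\right)^{d_2}$ up to a constant factor like $(4e)^{d_2}$ coming from falling-factorial corrections and the $d_1\le n/32$ regime. Summing over the at most $\binom{nd_1/d_2}{2}\le (nd_1/d_2)^2$ pairs $u,v\in V_2$ yields
\[
\mathbb P\big(G\notin \mathcal G'(n,m,d_1,d_2)\big)\le \left(\frac{nd_1}{d_2}\right)^2\left(\frac{4ed_2}{n}\right)^{d_2},
\]
which is \eqref{eq:low}.

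\textbf{Key steps in order.} First, set up the union bound over unordered pairs in $V_2$ and reduce to estimating $\mathbb P(N(u)=N(v))$ for a single fixed pair. Second, perform the counting/switching estimate: fix $S=N(u)$ and bound the conditional probability that $v$'s $d_2$ neighbors all lie in $S$; here one uses the exchangeability of edges in the uniform model and the bound $d_2\le d_1\le n/32$ to control the falling factorials $[n-d_2]_{d_2}$ versus $[n]_{d_2}$ and the degree-availability factors in $S$. One should track that each successive edge of $v$ has at most $\sim d_2\cdot d_1$ ``slots'' inside $S$ out of $\sim n\cdot d_1$ total available slots, giving the factor $(d_2/n)^{d_2}$, with the constant $4e$ absorbing the $e$ from $\binom{n}{d_2}$-type terms and a factor accounting for multiplicities up to $d_1/d_2$. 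Third, multiply by $(nd_1/d_2)^2$ and conclude \eqref{eq:low}. Fourth, derive \eqref{eq:low1}: if $d_2$ is a large constant or grows, $\left(\frac{4ed_2}{n}\right)^{d_2}$ decays super-polynomially, so the dominant contribution to the failure probability when $d_2$ is bounded below (say $d_2\ge 3$) is $O\!\big((nd_1/d_2)^2 (d_2/n)^{d_2}\big) = O(d_1^2/(nd_2^2))\cdot O\!\big((d_2/n)^{d_2-2}\big)$; checking the cases $d_2=3$ (where the exponent gives exactly $d_1^2/(nd_2^2)$ up to constants) and $d_2\ge 4$ (where it is even smaller) yields the stated $O(d_1^2/(nd_2^2))$.

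\textbf{Main obstacle.} The delicate point is the conditional probability estimate $\mathbb P(N(v)\subseteq S)$ in the uniform (simple) biregular model rather than the configuration model, since conditioning on simplicity and on $N(u)=S$ correlates the remaining edges. The cleanest route is to invoke Proposition~\ref{eq:propH} (the McKay subgraph-probability bound) with $H$ taken to be the bipartite graph forcing $v$ adjacent to all $d_2$ vertices of a fixed set $S$ that is also $N(u)$ — i.e.\ $H$ is the ``double star'' on $\{u,v\}\cup S$ with $2d_2$ edges — and then sum $\mathbb P(H\subseteq G)$ over the $\binom{n}{d_2}$ choices of $S$ and the $(nd_1/d_2)^2$ ordered pairs $(u,v)$. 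Carefully bounding $\prod [g_i]_{h_i}/[nd_1-4d_1^2-1]_{2d_2}$ and simplifying $\binom{n}{d_2}$ via $\binom{n}{d_2}\le (en/d_2)^{d_2}$ is exactly where the constant $4e$ and the regime hypothesis $d_1\le n/32$ (ensuring the McKay bound applies and the denominators are comparable to $(nd_1)^{2d_2}$) come into play; this bookkeeping is the main technical content. Since the statement is quoted from \cite{dumitriu2019spectra}, I would present the McKay-bound route in detail and refer to that paper for the original switching-based derivation.
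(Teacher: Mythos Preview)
The paper does not give a proof of this lemma at all: it is stated as a quotation of Lemma~4.8 from \cite{dumitriu2019spectra}, with no argument supplied beyond the citation. So there is no ``paper's own proof'' to compare against; your proposal is a self-contained proof of the cited result.

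That said, your plan is correct and is exactly the natural argument. The union bound over the $\binom{m}{2}\le (nd_1/d_2)^2$ pairs $u,v\in V_2$, followed by bounding $\mathbb P(N(u)=N(v))$ via Proposition~\ref{eq:propH} applied to the ``double star'' $H$ on $\{u,v\}\cup S$ (with $|H|=2d_2$, $h_u=h_v=d_2$, $h_s=2$ for $s\in S$) and summing over the $\binom{n}{d_2}$ choices of $S$, gives precisely the form of \eqref{eq:low}. The hypothesis $d_1\le n/32$ is what makes $[nd_1-4d_1^2-1]_{2d_2}$ comparable to $(nd_1)^{2d_2}$ and generates the constant $4e$ after combining with $\binom{n}{d_2}\le (en/d_2)^{d_2}$ and the factorials. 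Your derivation of \eqref{eq:low1} from \eqref{eq:low} is also fine: writing the bound as $d_1^2\,(4ed_2/n)^{d_2-2}\cdot(4e)^2$, the function $d_2\mapsto (4ed_2/n)^{d_2-2}$ is decreasing on $[3,n/32]$ (its logarithmic derivative is $\log(4ed_2/n)+1\le \log(e/8)+1<0$), so the worst case is $d_2=3$, where the bound is $O(d_1^2/n)=O(d_1^2/(nd_2^2))$. One small slip: in your displayed identity you wrote $O(d_1^2/(nd_2^2))\cdot O((d_2/n)^{d_2-2})$, but the correct factorization is $(nd_1/d_2)^2(d_2/n)^{d_2}=d_1^2(d_2/n)^{d_2-2}$, with no extra $1/(nd_2^2)$; this does not affect the conclusion.
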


Lemma \ref{lem:RHRG} implies the following total variation bound.
\begin{lemma}[total variation bound]\label{lem:TV}
Let $\mu_n$ be the probability measure of the random $(d_1,d_2)$-regular hypergraph with $n$ vertices induced on the set of all $(n,m,d_1,d_2)$-biregular bipartite graphs,  and let $\mu_n'$ be the uniform measure on the set of all $(n,m,d_1,d_2)$-biregular bipartite graphs. We have 
\begin{align}
    d_{\textnormal{TV}}(\mu_n,\mu_n')\leq \left(\frac{nd_1}{d_2}\right)^2\left(\frac{4ed_2}{n}\right)^{d_2}.
\end{align}
\end{lemma}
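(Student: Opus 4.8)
The plan is to realize $\mu_n$ as the uniform measure $\mu_n'$ conditioned on the event that the biregular bipartite graph lies in the ``good'' subset $\mathcal G'(n,m,d_1,d_2)$ of bipartite graphs corresponding to simple regular hypergraphs, and then to invoke the elementary fact that the total variation distance between a probability measure and its conditional version on an event equals the probability of the complementary event.

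First I would recall, via Lemma \ref{bijection}, that $\Phi$ restricts to a bijection from $\mathcal H(n,d_1,d_2)$ onto $\mathcal G'(n,m,d_1,d_2)$. Pushing the uniform measure on $\mathcal H(n,d_1,d_2)$ forward along $\Phi$ therefore yields the uniform measure on $\mathcal G'(n,m,d_1,d_2)$, which, extended by zero to all of $\mathcal G(n,m,d_1,d_2)$, is exactly $\mu_n$. Since $\mu_n'$ is by definition uniform on $\mathcal G(n,m,d_1,d_2)$, this gives the identification $\mu_n(\cdot) = \mu_n'(\cdot \cap \mathcal G')/\mu_n'(\mathcal G') = \mu_n'(\cdot \mid \mathcal G')$, where positivity of $\mu_n'(\mathcal G')$ is guaranteed by the hypotheses of Lemma \ref{lem:RHRG}.

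Second, for any probability measure $\mu$ and measurable event $A$ with $\mu(A)>0$ one has $d_{\textnormal{TV}}(\mu,\mu(\cdot\mid A)) = \mu(A^c)$: for any measurable $B$, $\mu(B)-\mu(B\mid A) = \mu(B\cap A^c) - \mu(B\cap A)\,\mu(A^c)/\mu(A)$, whose supremum over $B$ is attained at $B=A^c$ and equals $\mu(A^c)$. Applying this with $A = \mathcal G'(n,m,d_1,d_2)$ gives $d_{\textnormal{TV}}(\mu_n,\mu_n') = \mu_n'\bigl(\mathcal G(n,m,d_1,d_2)\setminus\mathcal G'(n,m,d_1,d_2)\bigr) = 1 - \mathbb P(G\in\mathcal G'(n,m,d_1,d_2))$, with $G$ uniform on $\mathcal G(n,m,d_1,d_2)$. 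Finally, invoking the lower bound \eqref{eq:low} from Lemma \ref{lem:RHRG} yields $d_{\textnormal{TV}}(\mu_n,\mu_n') \leq (nd_1/d_2)^2(4ed_2/n)^{d_2}$, as claimed.

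There is no genuine obstacle here: the only step requiring a little care is the bookkeeping that identifies $\mu_n$ with the conditioned uniform measure, i.e.\ that the bijection $\Phi$ transports one uniform law to the other, together with the routine verification of the total-variation identity. Once these are in place the bound is immediate from Lemma \ref{lem:RHRG}.
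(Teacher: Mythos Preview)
Your proposal is correct and follows essentially the same approach as the paper: both identify $\mu_n$ as $\mu_n'$ conditioned on $\mathcal G'(n,m,d_1,d_2)$, observe that the total variation distance equals $\mathbb P(G\notin\mathcal G')$, and then invoke Lemma~\ref{lem:RHRG}. Your write-up is in fact slightly more careful than the paper's, since you explicitly justify the identity $d_{\textnormal{TV}}(\mu,\mu(\cdot\mid A))=\mu(A^c)$ rather than simply asserting where the supremum is attained.
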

\begin{proof} Since $\mathcal G'(n,m,d_1,d_2)$ is the set of all biregular bipartite graphs that are bijective to regular hypergraphs. We have 
$\mu_n(\mathcal G'(n,m,d_1,d_2) )=1$ and $\mu_n'(\mathcal G'(n,m,d_1,d_2))=\frac{|\mathcal \mathcal G'(n,m,d_1,d_2)|}{|\mathcal G(n,m, d_1,d_2)|}$.  Let $\mathcal F$ be the power set of $\mathcal G(n,m,d_1,d_2)$.
Taking into account the fact that both $\mu_n$ and $\mu_n'$ are uniform measures, we obtain that
\begin{align*}
   d_{\textnormal{TV}}(\mu_n,\mu_n')&=\sup_{A\in \mathcal F}|\mu_n(A)-\mu_n'(A)|=|1-\mu_n'(\mathcal \mathcal G'(n,m,d_1,d_2)) |\\
   &=\mathbb P\left(G\not\in \mathcal G'\left(n,m,d_1,d_2\right) \right)\leq \left(\frac{nd_1}{d_2}\right)^2\left(\frac{4ed_2}{n}\right)^{d_2},
\end{align*}
where the last inequality is from  Lemma \ref{lem:RHRG}.
\end{proof}
Equipped with Lemma \ref{lem:TV}, we obtain several corollaries for random regular hypergraphs in the following subsections.

\subsection{Cycle counts}
Recall the definition of cycles in a hypergraph given in Definition \ref{cycledefinition}.
Let $C_k$ be the number of cycles of length $k$ in a $(d_1,d_2)$-regular hypergraph. The following result holds.
\begin{cor}
Let $H$ be a $(d_1,d_2)$-random regular  hypergraph with cycle counts $(C_k, k\geq 2)$. Let $(Z_k, k\geq 2)$ be independent Poisson random variables with $\displaystyle \mathbb EZ_k=\frac{(d_1-1)^{k}(d_2-1)^{k}}{2k}$.	 For any $n,m\geq 1$, $r\geq 3$, and  $3\leq d_2\leq d_1\leq  \frac{n}{32}$,
\[ d_{\textnormal{TV}}((C_2,\dots,C_r),(Z_2,\dots,Z_r))\leq  \frac{c_6\sqrt{r}(d_1-1)^{3r/2}(d_2-1)^{3r/2}}{nd_1}+\left(\frac{nd_1}{d_2}\right)^2\left(\frac{4ed_2}{n}\right)^{d_2}.\]
\end{cor}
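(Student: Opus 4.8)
The plan is to combine the Poisson approximation for cycle counts in random biregular bipartite graphs (Theorem \ref{thm:Poissoncount}) with the bijection $\Phi$ between simple $(d_1,d_2)$-regular hypergraphs and the subclass $\mathcal G'(n,m,d_1,d_2)$ of biregular bipartite graphs, using the total variation bound of Lemma \ref{lem:TV} to transfer the result. The key observation, recorded in Definition \ref{cycledefinition}, is that under the bijection a cycle of length $k$ in the hypergraph $H$ corresponds exactly to a cycle of length $2k$ in the associated bipartite graph $G = \Phi(H)$. Hence the vector of hypergraph cycle counts $(C_2,\dots,C_r)$ of a uniformly random $(d_1,d_2)$-regular hypergraph has the same law as the vector $(C_2^G,\dots,C_r^G)$ of (length-$2k$) bipartite cycle counts of a bipartite graph drawn from $\mu_n$, the measure induced on $\mathcal G(n,m,d_1,d_2)$ by pushing forward the uniform measure on hypergraphs.

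First I would fix notation: write $\mu_n'$ for the uniform measure on all $(n,m,d_1,d_2)$-biregular bipartite graphs, and $\mu_n$ for the push-forward of the uniform measure on $\mathcal H(n,d_1,d_2)$; both are supported on $\mathcal G(n,m,d_1,d_2)$, with $\mu_n$ supported on the subset $\mathcal G'$. Let $\Psi\colon \mathcal G(n,m,d_1,d_2)\to \mathbb N^{r-1}$ be the measurable map sending a bipartite graph to its vector of length-$2k$ cycle counts for $2\le k\le r$. Then the hypergraph cycle-count vector has law $\Psi_*\mu_n$ and, by Theorem \ref{thm:Poissoncount}, $\Psi_*\mu_n'$ is within total variation distance $c_6\sqrt{r}(d_1-1)^{3r/2}(d_2-1)^{3r/2}/(nd_1)$ of the product of independent $\mathrm{Pois}(\mu_k)$. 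Using the elementary fact that a measurable map cannot increase total variation distance (the inequality \eqref{eq:TVinequality} in the excerpt), together with the triangle inequality,
\begin{align*}
d_{\textnormal{TV}}\big(\Psi_*\mu_n, (Z_2,\dots,Z_r)\big)
&\le d_{\textnormal{TV}}\big(\Psi_*\mu_n,\Psi_*\mu_n'\big) + d_{\textnormal{TV}}\big(\Psi_*\mu_n',(Z_2,\dots,Z_r)\big)\\
&\le d_{\textnormal{TV}}(\mu_n,\mu_n') + \frac{c_6\sqrt{r}(d_1-1)^{3r/2}(d_2-1)^{3r/2}}{nd_1}.
\end{align*}
Applying Lemma \ref{lem:TV} to bound the first term by $(nd_1/d_2)^2(4ed_2/n)^{d_2}$ completes the proof. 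The hypothesis $3\le d_2\le d_1\le n/32$ is exactly what Lemma \ref{lem:RHRG} (and hence Lemma \ref{lem:TV}) needs; note also that $d_1\le n/32$ comfortably implies $d_1\le n^{1/3}$ is \emph{not} automatic, but Theorem \ref{thm:Poissoncount} is stated for all $n,m\ge 1$ and $r\ge 2$, $d_1\ge 3$ with the bound holding trivially when $d_1>n^{1/3}$ or $r>n^{1/10}$, so there is no range restriction to worry about there.

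There is no real obstacle here — the argument is a routine transfer via coupling/total-variation — but the one point that deserves care is making the identification of cycle counts precise: I would want to check that the bijection $\Phi$ genuinely matches length-$k$ hypergraph cycles (closed walks with distinct vertices and distinct hyperedges, per Definition \ref{cycledefinition}) bijectively with length-$2k$ bipartite cycles in the sense of Definition \ref{def:simplecycle}, with no off-by-one or double-counting issues, and that this holds simultaneously for all $2\le k\le r$ so that the \emph{joint} laws of the count vectors agree. Once that correspondence is in hand, the distributional identity $\Psi_*\mu_n = \mathrm{Law}(C_2,\dots,C_r)$ for the hypergraph is immediate, and the displayed three-line estimate above is the entire proof.
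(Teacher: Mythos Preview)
Your proposal is correct and follows essentially the same approach as the paper: identify hypergraph cycle counts with bipartite cycle counts via the bijection $\Phi$, bound the first discrepancy by $d_{\textnormal{TV}}(\mu_n,\mu_n')$ using Lemma~\ref{lem:TV}, bound the second by Theorem~\ref{thm:Poissoncount}, and combine via the triangle inequality. Your added remarks on the cycle correspondence and on the trivial range of Theorem~\ref{thm:Poissoncount} are accurate and more explicit than the paper's three-line proof, but the argument is the same.
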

\begin{proof}
Let $\tilde{C}_k$ be the number of cycles with length $2k$ in a uniform random $(d_1,d_2)$-biregular bipartite graph. From Lemma \ref{lem:TV},
\begin{align}
  d_{\textnormal{TV}}((C_2,\dots,C_r),(\tilde{C}_2,\dots,\tilde{C}_r))\leq d_{\textnormal{TV}}(\mu_n,\mu_n')\leq  \left(\frac{nd_1}{d_2}\right)^2\left(\frac{4ed_2}{n}\right)^{d_2}.  
\end{align}
Then the conclusion follows from Theorem \ref{thm:Poissoncount} and the triangle inequality.
\end{proof}



\subsection{Global laws} The limiting spectral distributions for the adjacency matrix of a random regular hypergraph can be summarized in the following corollary.
\begin{cor}\label{thm:3regimesglobal}
Let $H$ be a random $(d_1,d_2)$-regular hypergraph. 
\begin{enumerate}
\item If $d_1,d_2$ are fixed,   the empirical spectral distribution of $\frac{A-(d_2-2)}{\sqrt{(d_1-1)(d_2-1)}}$ converges in probability to a measure $\mu$ with density function given by \begin{align}\label{LSD}
	f(x):=\frac{1+\frac{d_2-1}{q}}{(1+\frac{1}{q}-\frac{x}{\sqrt{q}})(1+\frac{(d_2-1)^2}{q}+\frac{(d_2-1)x}{\sqrt{q}})}\frac{1}{\pi}\sqrt{1-\frac{x^2}{4}} dx,
	\end{align}
	where $q=(d_1-1)(d_2-1)$.
	 \item For $d_1,d_2\to\infty$ with $\frac{d_1}{d_2}\to\alpha\geq 1$ and $d_1\leq \frac{n}{32}$,  the empirical spectral distribution of $ \frac{A-(d_2-2)}{\sqrt{(d_1-1)(d_2-1)}}$ converges in probability to  a measure supported on $[-2,2]$ with a density function given by	
	 \begin{align}\label{mua}
	g(x)=\frac{\alpha}{1+\alpha+\sqrt{\alpha}x}\frac{1}{\pi}\sqrt{1-\frac{x^2}{4}} .
	\end{align}
    \item If $d_1\to\infty, d_1=o(n^{\epsilon})$ for any $\epsilon>0$ and $\frac{d_1}{d_2}\to\infty$,  the ESD of $\frac{A}{\sqrt{(d_1-1)(d_2-1)}}$ converges to the semicircle law in probability.
\end{enumerate}
\end{cor}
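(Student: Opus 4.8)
The plan is to reduce all three parts to the corresponding statements for the uniform random biregular bipartite graph model and then invoke results already available. By Lemma~\ref{lem:matrixcorre}, the adjacency matrix of a $(d_1,d_2)$-regular hypergraph $H$ equals $A_H=XX^\top-d_1I$, where $X$ is its incidence matrix, which is simultaneously the off-diagonal block of the adjacency matrix of the associated $(d_1,d_2)$-biregular bipartite graph $\Phi(H)$. By Lemma~\ref{bijection}, $\Phi$ pushes the uniform distribution on simple $(d_1,d_2)$-regular hypergraphs forward to the uniform distribution on $\mathcal G'(n,m,d_1,d_2)$, and by Lemma~\ref{lem:TV} the latter is within total variation $(nd_1/d_2)^2(4ed_2/n)^{d_2}$ of the uniform distribution $\mu_n'$ on $\mathcal G(n,m,d_1,d_2)$. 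First I would check that this bound is $o(1)$ in each regime, which it is: in (1) the degrees are fixed with $d_2\ge3$; in (2) one has $d_1,d_2\to\infty$, so $(4ed_2/n)^{d_2}$ decays faster than any power of $n$ (using $4ed_2\le4ed_1\le n$ eventually) while the prefactor is polynomial; in (3) one has $d_1=n^{o(1)}$ and $d_2$ either a fixed constant $\ge3$ or $d_2\to\infty$. Hence for any events $\mathcal E_n$ with $\mu_n'(\mathcal E_n)\to1$ we get $\mu_n(\mathcal E_n)\ge\mu_n'(\mathcal E_n)-o(1)\to1$ for the hypergraph measure $\mu_n$; applying this to the events $\{|\int\varphi\,dL_n-\int\varphi\,d\nu|<\varepsilon\}$, with $L_n$ the relevant empirical spectral distribution and $\varphi$ running over a countable convergence-determining family, shows that convergence in probability of the ESD under $\mu_n'$ transfers verbatim to the hypergraph model. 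So it suffices to establish each limit under $\mu_n'$.

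Part (3) is then immediate, since $\frac{A_H}{\sqrt{(d_1-1)(d_2-1)}}=\frac{XX^\top-d_1I}{\sqrt{(d_1-1)(d_2-1)}}$ and the hypotheses \eqref{eq:assumption1}--\eqref{eq:assumption3} of part (3) are exactly those of Theorem~\ref{thm:globallaw}, which gives almost sure, hence in-probability, convergence of this ESD to the semicircle law; the degenerate case $d_2=2$, where $H$ is a random $d_1$-regular graph and $A_H$ its ordinary adjacency matrix, would instead be covered by the classical $d_1\to\infty$ semicircle law for random regular graphs. For parts (1) and (2) I would use Lemma~\ref{lem:matrixcorre} once more in the form
\[
\frac{A_H-(d_2-2)I}{\sqrt{(d_1-1)(d_2-1)}}=\frac{XX^\top-(d_1+d_2-2)I}{\sqrt{(d_1-1)(d_2-1)}},
\]
so that the eigenvalues of the left-hand side are the images of the singular values $\sigma$ of $X$ under the quadratic map $\sigma\mapsto(\sigma^2-d_1-d_2+2)/\sqrt{(d_1-1)(d_2-1)}$; since the spectrum of $XX^\top$ is, up to a deterministic renormalization of mass and boundedly many edge outliers, the squared spectrum of the bipartite adjacency matrix, the ESD in question is asymptotically the push-forward of the limiting spectral law of the biregular bipartite graph under this map. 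For fixed $d_1,d_2$ that limiting law is the Kesten--McKay-type law of \cite{godsil1988walk,mizuno2003semicircle}, whose push-forward a direct change-of-variables computation identifies with the density $f$ of \eqref{LSD}; alternatively, via the walk identity \eqref{eq:plambda} and Corollary~\ref{cor:dTVCNBW} one gets $n^{-1}\sum_i p_k(\lambda_i)=[(d_1-1)(d_2-1)]^{-k/2}n^{-1}\,\textnormal{NBW}_k^{(n)}\to0$ in probability for every $k\ge1$, which characterizes the limiting ESD as the probability measure against which every $p_k$, $k\ge1$, integrates to zero. For $d_1,d_2\to\infty$ with $d_1/d_2\to\alpha\ge1$ and $d_1\le n/32$, the limiting law of the suitably normalized biregular bipartite graph is the Mar\v{c}enko--Pastur-type law of \cite{dumitriu2016marvcenko,tran2020local,yang2017local} with aspect ratio $1/\alpha$, and, using $(d_1+d_2-2)/d_1\to1$ and $\sqrt{(d_1-1)(d_2-1)}/d_1\to1/\sqrt{\alpha}$, the same change of variables carries it to the measure on $[-2,2]$ with density $g$ of \eqref{mua}.

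The only steps that go beyond bookkeeping are the two explicit push-forward computations --- turning the Kesten--McKay law into $f$ and the Mar\v{c}enko--Pastur law into $g$. These are routine but somewhat delicate changes of variables, and since both target measures are compactly supported they can instead be verified at the level of moments, where the Catalan-number structure is precisely the one already extracted in the proof of Theorem~\ref{thm:globallaw} and in the moment analysis of \cite{dumitriu2016marvcenko}. I therefore expect the main (purely technical) obstacle to be making the shifts and normalizations line up cleanly in parts (1)--(2); there is no new probabilistic content, as the whole of Corollary~\ref{thm:3regimesglobal} is the assembly of Lemma~\ref{lem:matrixcorre}, Lemma~\ref{lem:TV}, Theorem~\ref{thm:globallaw}, and the cited limit laws.
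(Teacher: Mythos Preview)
Your proposal is correct and follows essentially the same route as the paper: reduce to the uniform biregular bipartite model via Lemmas~\ref{lem:matrixcorre} and~\ref{lem:TV}, then invoke the known limit laws (Kesten--McKay, Mar\v{c}enko--Pastur, and Theorem~\ref{thm:globallaw}) for that model. The only difference is one of presentation: the paper disposes of parts~(1) and~(2) by direct citation to \cite{dumitriu2019spectra} (Theorems~6.4 and~6.6 there), which already carried out the push-forward computations you outline, whereas you sketch those change-of-variables arguments explicitly; your handling of the $d_2=2$ edge case and of the TV-bound verification in each regime is actually slightly more careful than the paper's own proof.
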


\begin{proof}[Proof of Corollary \ref{thm:3regimesglobal}]
Claim (1) is proved in Theorem 6.4 of \cite{dumitriu2019spectra} based on a result for deterministic regular hypergraphs in Theorem 5 of \cite{feng1996spectra}.

Claim (2) is a combination of several results. When $d_1=o(n^{1/2})$, it is proved in Theorem 6.6 of \cite{dumitriu2019spectra} based on the global law for random biregular bipartite graphs in \cite{dumitriu2016marvcenko} and \cite{tran2020local}.  When $d_1=\omega(\log^4 n)$, the optimal local law for RBBGs  was recently  proved in \cite{yang2017local}, which also implies the global law for RBBGs. 
When $d_1\leq \frac{n}{32}$ and $ \frac{d_1}{d_2}\to\alpha$, from Lemma \ref{lem:RHRG}, 
\[ \mathbb P\left(G\in \mathcal G'\left(n,m,d_1,d_2\right) \right)\to 1.\]
Therefore by the same proof of Theorem 6.6 in \cite{dumitriu2019spectra}, the ESD of $\frac{A-(d_2-2)}{\sqrt{(d_1-1)(d_2-1)}}$ for random regular hypergraphs converges in probability.

Under the assumptions $d_1\to\infty, d_1=o(n^{\epsilon})$ for any $\epsilon>0$ and $\frac{d_1}{d_2}\to\infty$, from  Lemma \ref{lem:RHRG}, we have again
$ \mathbb P\left(G\in \mathcal G'\left(n,m,d_1,d_2\right) \right)\to 1.$
Then Claim (3) follows from  Theorem \ref{thm:globallaw} and Lemma \ref{lem:TV}. 
\end{proof}

\begin{remark}
The ESD in Corollary \ref{thm:3regimesglobal} (2) is a shifted and scaled Mar\v{c}enko-Pastur law. Taking $\alpha\to \infty$, $g(x)$ converges to the density function of the semicircle law. The  transition from Mar\v{c}enko-Pastur law to the semicircle law was also proved for sample covariance matrices in \cite{bai1988convergence} when the aspect ratio goes to infinity.
\end{remark}
\begin{remark}
A semicircle law for the adjacency matrix of $d_2$-uniform Erd\H{o}s-R\'{e}nyi random hypergraphs with growing expected degrees was proved in Theorem 5 of \cite{lu2012loose} when $d_2$ is a constant. Part (3) of Corollary \ref{thm:3regimesglobal} proves a corresponding semicircle law for random $d_2$-uniform $d_1$-regular hypergraphs where $d_2$ can be a parameter depending on $n$.
\end{remark}

\subsection{Spectral gaps}
The spectral gap for random regular hypergraphs  with fixed $d_1,d_2$ was studied in \cite{dumitriu2019spectra}. Here we include the results for the case when  $d_1,d_2$ are growing with $n$.

\begin{cor}
Let	$H$ be a random $(d_1,d_2)$-regular hypergraph with $d_1\geq d_2$. Let $\lambda_1\geq \cdots\geq \lambda_n$ be the eigenvalues of $A$. Let $\lambda=\max_{2\leq i\leq n}|\lambda_i|$.
\begin{enumerate}
	\item Suppose $d_1\geq d_2\geq 3$ is fixed. There exists a sequence $\epsilon_n\to 0$ such that
	 \begin{align*}
	     \mathbb P( |\lambda-(d_2-2)|\geq 2\sqrt{(d_1-1)(d_2-1)}+\epsilon_n)\to 0 
	 \end{align*} 
	as $n\to\infty$.
	\item Suppose $3\leq d_2\leq \frac{1}{2}n^{2/3}$, $d_1\geq d_2\geq cd_1$ for some $c\in (0,1)$. Then for some constant $K>0$ depending on $c$, for all $n\geq 1$,
	\begin{align*}
	\mathbb P\left( \lambda\geq K\sqrt{(d_1-1)(d_2-1)}\right)=O\left(\frac{1}{n}\right).
	\end{align*} 
\item Suppose $3\leq d_2\leq C_1$ for a constant $C_1$, and $d_1=o(n^{1/2})$. There exists a constant $C$ depending on $C_1$ such that
	\begin{align*}
\mathbb P\left( \lambda\geq C\sqrt{(d_1-1)(d_2-1)}\right)=O\left(\frac{d_1^2}{n^2} \right). 	    
	\end{align*} 
\end{enumerate}
\end{cor}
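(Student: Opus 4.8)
The plan is to read the spectrum of $A$ off Theorem~\ref{thm:spectralgap} and then transport the estimates from the uniform biregular bipartite model to the hypergraph model by means of the total variation bound in Lemma~\ref{lem:TV}. If $H$ is a $(d_1,d_2)$-regular hypergraph with incidence matrix $X$ and $G=\Phi(H)$ is its associated biregular bipartite graph, Lemma~\ref{lem:matrixcorre} gives $A=XX^{\top}-d_1I$, so the eigenvalues $\lambda_1\ge\cdots\ge\lambda_n$ of $A$ coincide with the eigenvalues of $XX^{\top}-d_1I$; in particular $\lambda_1=d_1(d_2-1)$ and $\lambda=\max_{2\le i\le n}|\lambda_i|$ is exactly the quantity estimated in Theorem~\ref{thm:spectralgap}. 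By Lemma~\ref{bijection}, a uniformly random $(d_1,d_2)$-regular hypergraph corresponds to a biregular bipartite graph drawn uniformly from $\mathcal G'(n,m,d_1,d_2)$; write $\mu_n$ for this law and $\mu_n'$ for the uniform law on all of $\mathcal G(n,m,d_1,d_2)$, so that $d_{\textnormal{TV}}(\mu_n,\mu_n')\le \left(\tfrac{nd_1}{d_2}\right)^2\left(\tfrac{4ed_2}{n}\right)^{d_2}$ by Lemma~\ref{lem:TV}.

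Next, for the event $\mathcal E$ appearing in each part (namely $\{|\lambda-(d_2-2)|\ge 2\sqrt{(d_1-1)(d_2-1)}+\epsilon_n\}$ in (1), and $\{\lambda\ge K\sqrt{(d_1-1)(d_2-1)}\}$, resp.\ $\{\lambda\ge C\sqrt{(d_1-1)(d_2-1)}\}$, in (2) and (3)), the coupling inequality gives $\mu_n(\mathcal E)\le \mu_n'(\mathcal E)+d_{\textnormal{TV}}(\mu_n,\mu_n')$. It then suffices to bound $\mu_n'(\mathcal E)$ by the corresponding part of Theorem~\ref{thm:spectralgap} and to verify that the total variation term is of the claimed order. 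In (1) all of $d_1,d_2\ge3$ are fixed, so $\left(\tfrac{nd_1}{d_2}\right)^2\left(\tfrac{4ed_2}{n}\right)^{d_2}=O(n^{2-d_2})=o(1)$, while Theorem~\ref{thm:spectralgap}(1) gives $\mu_n'(\mathcal E)\to0$; the sum tends to $0$. In (2), $d_1/d_2\le 1/c$ gives $\left(\tfrac{nd_1}{d_2}\right)^2\le n^2/c^2$, and since $d\mapsto(4ed/n)^d$ is decreasing on $3\le d\le n/(4e^2)$ one has $\left(\tfrac{4ed_2}{n}\right)^{d_2}\le(12e/n)^3$, so the total variation term is $O(n^{-1})$; combined with $\mu_n'(\mathcal E)\le n^{-2}$ and $K=\alpha_1$ from Theorem~\ref{thm:spectralgap}(2), this yields $O(n^{-1})$. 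In (3), Theorem~\ref{thm:spectralgap}(3) gives $\mu_n'(\mathcal E)\le n^{-2}$ with $C=\alpha_2$, and adding the total variation term — controlled by $\mathbb P(G\notin\mathcal G'(n,m,d_1,d_2))$ via Lemma~\ref{lem:RHRG} — produces the stated rate.

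The main point to watch, rather than a genuine obstacle, is the bookkeeping: the two substantive ingredients — the spectral gap estimates of Theorem~\ref{thm:spectralgap} (imported from \cite{brito2018spectral,zhu2020second}) and the enumeration estimate of Lemma~\ref{lem:RHRG} (from \cite{dumitriu2019spectra}) — are already in hand, so the corollary is a formal transfer. In regimes (2) and (3) the total variation discrepancy $\left(\tfrac{nd_1}{d_2}\right)^2\left(\tfrac{4ed_2}{n}\right)^{d_2}$ dominates the $n^{-2}$ estimate coming from Theorem~\ref{thm:spectralgap}, which is precisely why the failure probability degrades from $n^{-2}$ (in the biregular bipartite model) to $O(n^{-1})$ in (2) and to a $d_1$-dependent rate in (3); one must therefore record the dominant term carefully rather than the $n^{-2}$ term. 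One should also check that the hypotheses $3\le d_2\le d_1\le n/32$ of Lemma~\ref{lem:RHRG} hold once $n$ is large in each regime (using $d_1\le d_2/c\le n^{2/3}/(2c)$ in (2) and $d_1=o(n^{1/2})$ in (3)), the finitely many remaining small values of $n$ being handled trivially since probabilities are at most $1$.
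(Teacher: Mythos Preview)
Your proposal is correct and follows essentially the same approach as the paper: transfer the spectral gap estimates of Theorem~\ref{thm:spectralgap} from the uniform biregular bipartite model to the hypergraph model via Lemma~\ref{lem:RHRG} (equivalently the total variation bound of Lemma~\ref{lem:TV}), using Lemma~\ref{lem:matrixcorre} to identify the spectra. The only minor difference is that for part~(1) the paper simply cites Theorem~4.3 of \cite{dumitriu2019spectra}, whereas you rederive it from Theorem~\ref{thm:spectralgap}(1) together with the total variation bound; your version is self-contained and the paper's is a citation, but the substance is the same.
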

\begin{proof}
Claim (1) is proved in Theorem 4.3 in \cite{dumitriu2019spectra}. Claim (2) and (3) follow from part (2) and (3) in Theorem \ref{thm:spectralgap} with Lemma \ref{lem:RHRG}.
\end{proof}

\begin{remark}
Results in \cite{zhu2020second} that Claim (2) and (3) are based on have stronger probability estimates. However, Lemma \ref{lem:RHRG} we used here yields a weaker failure probability. 
\end{remark}

\subsection{Eigenvalue fluctuations}

The following eigenvalue fluctuation results for random regular hypergraphs can be derived from  Lemma \ref{lem:matrixcorre}, Lemma \ref{lem:TV}, and the eigenvalue fluctuations results for random biregular bipartite graphs in Section \ref{sec:CLT}.

\begin{cor}\label{thm:hyperfixd}
For fixed $d_1\geq d_2\geq 3$, let $H$ be a random $(d_1,d_2)$-regular hypergraph with adjacency matrix $A$. Let $\lambda_1\geq \cdots\geq \lambda_n$ be the eigenvalues of $\frac{A}{\sqrt{(d_1-1)(d_2-1)}}$. Suppose $f$ is a function satisfying the same conditions in Theorem \ref{thm:CLTfixed}. Then  $Y_{f}^{(n)}:=\sum_{i=1}^n f(\lambda_i)-n a_0$ converges in distribution as $n\to\infty$ to the infinitely divisible random variable
\[ Y_{f}:=\sum_{k=2}^{\infty} \frac{a_k}{[(d_1-1)(d_2-1)]^{k/2}}\CNBW_k^{(\infty)},\]
where $\CNBW_k^{(\infty)}$ is defined in \eqref{eq:CNBWinfinity}.
\end{cor}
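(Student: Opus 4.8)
The plan is to transfer Theorem \ref{thm:CLTfixed} from random biregular bipartite graphs to random regular hypergraphs using the two bridges already in place: the matrix identity of Lemma \ref{lem:matrixcorre} and the total variation bound of Lemma \ref{lem:TV}. First I would let $G=\Phi(H)$ be the $(d_1,d_2)$-biregular bipartite graph associated to $H$, with incidence matrix $X$. By Lemma \ref{lem:matrixcorre} the adjacency matrix of $H$ satisfies $A = XX^{\top} - d_1 I$, so the eigenvalues of $\frac{A}{\sqrt{(d_1-1)(d_2-1)}}$ are precisely the eigenvalues $\lambda_1\geq\cdots\geq\lambda_n$ of $\frac{XX^{\top}-d_1I}{\sqrt{(d_1-1)(d_2-1)}}$ appearing in Theorem \ref{thm:CLTfixed}. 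Hence $Y_f^{(n)} = \sum_{i=1}^n f(\lambda_i) - na_0$ is literally the same measurable functional of the underlying biregular bipartite graph in both models; only the distribution of that graph changes.

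Next I would compare the two distributions on biregular bipartite graphs. By Lemma \ref{bijection} the uniformly random $(d_1,d_2)$-regular hypergraph corresponds, under $\Phi$, to a biregular bipartite graph with some law $\mu_n$ supported on $\mathcal G'(n,m,d_1,d_2)$, and Lemma \ref{lem:TV} gives
\[
d_{\textnormal{TV}}(\mu_n,\mu_n') \leq \left(\frac{nd_1}{d_2}\right)^{2}\left(\frac{4ed_2}{n}\right)^{d_2},
\]
where $\mu_n'$ is the uniform law on $\mathcal G(n,m,d_1,d_2)$. Since $d_1,d_2$ are fixed with $d_2\geq 3$, the right-hand side is $O(n^{2-d_2}) = o(1)$, and for $n$ large the hypothesis $d_1\leq n/32$ of Lemma \ref{lem:TV} holds. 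Because total variation distance cannot increase under a measurable map, the law of $Y_f^{(n)}$ under $\mu_n$ and its law under $\mu_n'$ differ by at most this vanishing amount.

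Finally I would invoke Theorem \ref{thm:CLTfixed} itself: the hypotheses it imposes on $f$ are exactly those assumed here, and $(d_1,d_2)\neq(2,2)$ is automatic since $d_2\geq 3$, so under $\mu_n'$ the variable $Y_f^{(n)}$ converges in distribution to $Y_f = \sum_{k=2}^{\infty} \frac{a_k}{[(d_1-1)(d_2-1)]^{k/2}}\CNBW_k^{(\infty)}$. Since convergence in distribution is preserved when two sequences of laws are asymptotically close in total variation — for any bounded continuous $g$, the difference of $\mathbb E\, g(Y_f^{(n)})$ computed under $\mu_n$ and under $\mu_n'$ is at most $2\|g\|_{\infty}\, d_{\textnormal{TV}}(\mu_n,\mu_n')\to 0$ — the variable $Y_f^{(n)}$ also converges in distribution to $Y_f$ under $\mu_n$, which is the assertion.

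I do not expect a genuine obstacle: all the analytic content already lives in Theorem \ref{thm:CLTfixed}, and the remaining verifications are routine — that $Y_f^{(n)}$ is unchanged under $\Phi$ (immediate from Lemma \ref{lem:matrixcorre}) and that $(nd_1/d_2)^{2}(4ed_2/n)^{d_2}\to 0$ for fixed $d_2\geq 3$. The single point worth stating with care is the transfer of the weak limit through a total variation estimate, but this is standard.
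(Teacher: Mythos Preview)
Your proposal is correct and follows essentially the same approach as the paper: use Lemma \ref{lem:matrixcorre} to identify $Y_f^{(n)}$ as the same functional of the associated biregular bipartite graph, then invoke the total variation bound of Lemma \ref{lem:TV} (which is $o(1)$ since $d_2\geq 3$ is fixed) to transfer the distributional limit from Theorem \ref{thm:CLTfixed}. The paper's proof is more terse but the logic is identical.
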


\begin{proof}
Let $\tilde{Y}_f^{(n)}$ be the corresponding random variable of $Y_{f}^{(n)}$ for the uniform random biregular bipartite graphs  considered in Theorem \ref{thm:CLTfixed}. From the total variation distance bound in Lemma \ref{lem:TV}, we have 
\begin{align*}
    d_{\textnormal{TV}}(Y_f^{(n)}, \tilde{Y}_{f}^{(n)})\leq  d_{\textnormal{TV}}(\mu_n,\mu_n')\leq \left(\frac{nd_1}{d_2}\right)^2\left(\frac{4ed_2}{n}\right)^{d_2}=o(1).
\end{align*}
Therefore $\tilde{Y}_f^{(n)}$ and $Y_f^{(n)}$ converge in distribution to the same law.
\end{proof}

\begin{cor}
Let $H$ be a random $(d_1,d_2)$-regular hypergraph with $d_1d_2\to\infty$ as $n\to\infty$ and $d_1d_2=n^{o(1)}$. Let $\lambda_1\geq \cdots\geq \lambda_n$ be the eigenvalues of $\frac{A}{\sqrt{(d_1-1)(d_2-1)}}$. Let $f$ be a function satisfying \eqref{eq:expbound} and \eqref{eq:frn}.
Suppose one of the two assumptions holds:
\begin{enumerate}
    \item  there exists a constant $c\geq 1$ such that $1\leq \frac{d_1}{d_2}\leq c$,
    \item $3\leq d_2\leq c_1$ for a constant $c_1\geq 3$.
\end{enumerate} 
Then the random variable
\[Y_f^{(n)}=\sum_{i=1}^n f(\lambda_i)- m_f^{(n)}\]
converges in law to a Gaussian random variable with mean zero and variance $\sigma_f=\sum_{k=2}^{\infty}2ka_k^2$. Moreover, for any fixed $t$, consider the entire functions $g_1,\dots, g_t$   satisfying \eqref{eq:expbound} and \eqref{eq:frn}. The corresponding random vector 
$(Y_{g_1}^{(n)},\dots, Y_{g_t}^{(n)})$ converges in distribution to a centered Gaussian random vector $(Z_{g_1},\dots, Z_{g_t})$ with covariance
\begin{align*}
    \textnormal{Cov}(Z_{g_i},Z_{g_j})=2\sum_{k=2}^{\infty}ka_k(g_i) a_k(g_j)
\end{align*}
for $1\leq i,j\leq t$,
where $a_k(g_i), a_k(g_j)$ are the $k$-th coefficients in the expansion \eqref{eq:fphi} for $g_i,g_j$, respectively.
\end{cor}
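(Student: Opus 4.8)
The plan is to transfer the biregular-bipartite-graph result of Theorem \ref{thm:GaussianCLT} to the hypergraph model through the total variation comparison in Lemma \ref{lem:TV}, following the scheme of the proof of Corollary \ref{thm:hyperfixd}. First, if $G=\Phi(H)$ is the biregular bipartite graph associated to $H$, then by Lemma \ref{lem:matrixcorre} one has $A_H=XX^{\top}-d_1I$, so the eigenvalues of $\frac{A_H}{\sqrt{(d_1-1)(d_2-1)}}$ coincide deterministically with the eigenvalues $\lambda_1\ge\cdots\ge\lambda_n$ of $\frac{XX^{\top}-d_1I}{\sqrt{(d_1-1)(d_2-1)}}$. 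Hence $Y_f^{(n)}=\sum_{i=1}^n f(\lambda_i)-m_f^{(n)}$ is one and the same measurable function of the underlying graph in both models, since $m_f^{(n)}$ in \eqref{eq:defmfn} depends only on $n,d_1,d_2$ and $f$; write $\tilde Y_f^{(n)}$ (resp.\ $\tilde Y_{g_i}^{(n)}$) for this statistic computed under the uniform measure $\mu_n'$ on $\mathcal G(n,m,d_1,d_2)$.

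Second, I would check that under either assumption (1) or (2) the parameters fall in the range where Theorem \ref{thm:GaussianCLT} and the relevant parts of Theorem \ref{thm:spectralgap} apply. Since $d_1d_2\to\infty$ and $d_1d_2=n^{o(1)}$: under (1) one has $d_2^2\ge d_1d_2/c\to\infty$ and $d_1,d_2=n^{o(1)}$, so assumption (1) of Theorem \ref{thm:GaussianCLT} holds and $d_2\le\frac12 n^{2/3}$, matching Theorem \ref{thm:spectralgap}(2); under (2) the degree $d_2$ is bounded and $d_1\to\infty$ with $d_1=n^{o(1)}\le n^2$, so eventually $d_1\ge d_2$ and assumption (2) of Theorem \ref{thm:GaussianCLT} together with Theorem \ref{thm:spectralgap}(3) apply. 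Consequently $\tilde Y_f^{(n)}$ converges in distribution to a centered Gaussian with variance $\sigma_f=2\sum_{k=2}^{\infty}ka_k^2$, and $(\tilde Y_{g_1}^{(n)},\dots,\tilde Y_{g_t}^{(n)})$ converges to the centered Gaussian vector with covariance $2\sum_{k=2}^{\infty}ka_k(g_i)a_k(g_j)$.

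Third, since the vector of linear statistics is a fixed measurable function of the graph, Lemma \ref{lem:TV} gives
\[
d_{\textnormal{TV}}\big((Y_{g_1}^{(n)},\dots,Y_{g_t}^{(n)}),(\tilde Y_{g_1}^{(n)},\dots,\tilde Y_{g_t}^{(n)})\big)\le d_{\textnormal{TV}}(\mu_n,\mu_n')\le\Big(\frac{nd_1}{d_2}\Big)^2\Big(\frac{4ed_2}{n}\Big)^{d_2}.
\]
One then verifies this bound is $o(1)$: under (1), the logarithm of the right side equals $2\log(nd_1/d_2)+d_2(\log(4ed_2)-\log n)\le 2\log n+O(1)-(1-o(1))d_2\log n\to-\infty$ because $d_2\to\infty$; under (2) the right side is at most $n^{2+o(1)}(4ec_1/n)^3=n^{-1+o(1)}\to0$. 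By the coupling inequality, $(Y_{g_1}^{(n)},\dots,Y_{g_t}^{(n)})$ and $(\tilde Y_{g_1}^{(n)},\dots,\tilde Y_{g_t}^{(n)})$ have the same weak limit, which yields the joint statement, and the case $t=1$ gives the single-function CLT.

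The proof is essentially bookkeeping once Theorem \ref{thm:GaussianCLT} is available. The only real point of care is the simultaneous check that the joint growth conditions $d_1d_2\to\infty$, $d_1d_2=n^{o(1)}$ together with assumption (1) or (2) both (a) place the parameters inside the hypotheses of Theorem \ref{thm:GaussianCLT} and Theorem \ref{thm:spectralgap}, and (b) force the total variation bound of Lemma \ref{lem:TV} to decay to zero; I do not expect any genuine obstacle beyond this verification.
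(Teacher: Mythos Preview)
Your proposal is correct and follows essentially the same approach as the paper: transfer Theorem \ref{thm:GaussianCLT} to the hypergraph model via the total variation bound of Lemma \ref{lem:TV}, checking that $d_{\textnormal{TV}}(\mu_n,\mu_n')=o(1)$ under each of the two assumptions. Your version is in fact more explicit than the paper's in verifying that the hypotheses of Theorem \ref{thm:GaussianCLT} and Theorem \ref{thm:spectralgap} are met, and in treating the vector case directly rather than by the phrase ``follows in the same way.''
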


\begin{proof}
Recall Lemma \ref{lem:TV} and our assumption $d_1d_2=n^{o(1)}$.
Under Case (1), we have $d_2\to\infty$ and  
\begin{align*}
  d_{\textnormal{TV}}(\mu_n,\mu_n')\leq \left(\frac{nd_1}{d_2}\right)^2\left(\frac{4ed_2}{n}\right)^{d_2}=O(n^2)(n^{(-1+o(1))d_2})=o(1).
\end{align*}
Under Case (2), we have 
\begin{align*}
  d_{\textnormal{TV}}(\mu_n,\mu_n')\leq \left(\frac{nd_1}{d_2}\right)^2\left(\frac{4ed_2}{n}\right)^{d_2}=O(n^2d_1^2)\left( \frac{4ec_1}{n}\right)^{3}=o(1).
\end{align*}

Then with Lemma \ref{lem:TV}, in both cases $Y_f^{(n)}$ converges in distribution to the same limiting random variable defined in Theorem \ref{thm:GaussianCLT}. The proof of the covariance part follows in the same way.
\end{proof}

\bibliographystyle{plain}
\bibliography{globalref.bib}

\begin{thebibliography}{10}

\bibitem{adhikari2021linear}
Kartick Adhikari, Indrajit Jana, and Koushik Saha.
\newblock Linear eigenvalue statistics of random matrices with a variance
  profile.
\newblock {\em Random Matrices: Theory and Applications}, 10(03):2250004, 2021.

\bibitem{anderson2010introduction}
Greg~W Anderson, Alice Guionnet, and Ofer Zeitouni.
\newblock {\em An introduction to random matrices}, volume 118.
\newblock Cambridge university press, 2010.

\bibitem{anderson2006clt}
Greg~W Anderson and Ofer Zeitouni.
\newblock A {CLT} for a band matrix model.
\newblock {\em Probability Theory and Related Fields}, 134(2):283--338, 2006.

\bibitem{bai2004clt}
Zhidong Bai and Jack~W Silverstein.
\newblock {CLT} for linear spectral statistics of large-dimensional sample
  covariance matrices.
\newblock {\em Annals of Probability}, pages 553--605, 2004.

\bibitem{bai2010functional}
Zhidong Bai, Xiaoying Wang, and Wang Zhou.
\newblock Functional {CLT} for sample covariance matrices.
\newblock {\em Bernoulli}, 16(4):1086--1113, 2010.

\bibitem{bai2005convergence}
Zhidong Bai and Jianfeng Yao.
\newblock On the convergence of the spectral empirical process of {W}igner
  matrices.
\newblock {\em Bernoulli}, 11(6):1059--1092, 2005.

\bibitem{bai1988convergence}
Zhidong Bai and Yong~Q Yin.
\newblock Convergence to the semicircle law.
\newblock {\em The Annals of Probability}, pages 863--875, 1988.

\bibitem{bauerschmidt2020edge}
Roland Bauerschmidt, Jiaoyang Huang, Antti Knowles, and Horng-Tzer Yau.
\newblock Edge rigidity and universality of random regular graphs of
  intermediate degree.
\newblock {\em Geometric and Functional Analysis}, pages 1--77, 2020.

\bibitem{ben2015fluctuations}
G{\'e}rard Ben~Arous and Kim Dang.
\newblock On fluctuations of eigenvalues of random permutation matrices.
\newblock {\em Annales de l'IHP Probabilit{\'e}s et statistiques},
  51(2):620--647, 2015.

\bibitem{benaych2014central}
Florent Benaych-Georges, Alice Guionnet, and Camille Male.
\newblock Central limit theorems for linear statistics of heavy tailed random
  matrices.
\newblock {\em Communications in Mathematical Physics}, 329(2):641--686, 2014.

\bibitem{blake2017short}
Ian~F Blake and Shu Lin.
\newblock On short cycle enumeration in biregular bipartite graphs.
\newblock {\em IEEE Transactions on Information Theory}, 64(10):6526--6535,
  2017.

\bibitem{blinovsky2016asymptotic}
Vladimir Blinovsky and Catherine Greenhill.
\newblock Asymptotic enumeration of sparse uniform linear hypergraphs with
  given degrees.
\newblock {\em The Electronic Journal of Combinatorics}, 23(3):P3--17, 2016.

\bibitem{bordenave2015new}
Charles Bordenave.
\newblock A new proof of friedman's second eigenvalue theorem and its extension
  to random lifts.
\newblock {\em Annales Scientifiques de l'{\'E}cole Normale Sup{\'e}rieure},
  4(6):1393--1439, 2020.

\bibitem{bordenave2010resolvent}
Charles Bordenave and Marc Lelarge.
\newblock Resolvent of large random graphs.
\newblock {\em Random Structures \& Algorithms}, 37(3):332--352, 2010.

\bibitem{bordenave2017mean}
Charles Bordenave, Arnab Sen, and B{\'a}lint Vir{\'a}g.
\newblock Mean quantum percolation.
\newblock {\em Journal of the European Mathematical Society},
  19(12):3679--3707, 2017.

\bibitem{brito2018spectral}
Gerandy Brito, Ioana Dumitriu, and Kameron~Decker Harris.
\newblock Spectral gap in random bipartite biregular graphs and applications.
\newblock {\em Combinatorics, Probability and Computing}, 31(2):229--267, 2022.

\bibitem{broder1998optimal}
Andrei~Z Broder, Alan~M Frieze, Stephen Suen, and Eli Upfal.
\newblock Optimal construction of edge-disjoint paths in random graphs.
\newblock {\em SIAM Journal on Computing}, 28(2):541--573, 1998.

\bibitem{burnwal2020deterministic}
Shantanu~Prasad Burnwal and Mathukumalli Vidyasagar.
\newblock Deterministic completion of rectangular matrices using asymmetric
  ramanujan graphs: Exact and stable recovery.
\newblock {\em IEEE Transactions on Signal Processing}, 68:3834--3848, 2020.

\bibitem{cabanal2001fluctuations}
Thierry Cabanal-Duvillard.
\newblock Fluctuations de la loi empirique de grandes matrices al{\'e}atoires.
\newblock {\em Annales de l'Institut Henri Poincare (B) Probability and
  Statistics}, 37(3):373--402, 2001.

\bibitem{canfield2008asymptotic}
E~Rodney Canfield, Catherine Greenhill, and Brendan~D McKay.
\newblock Asymptotic enumeration of dense 0--1 matrices with specified line
  sums.
\newblock {\em Journal of Combinatorial Theory, Series A}, 115(1):32--66, 2008.

\bibitem{canfield2005asymptotic}
E~Rodney Canfield and Brendan~D McKay.
\newblock Asymptotic enumeration of dense 0-1 matrices with equal row sums and
  equal column sums.
\newblock {\em the Electronic Journal of Combinatorics}, 12(1):R29, 2005.

\bibitem{chatterjee2009fluctuations}
Sourav Chatterjee.
\newblock Fluctuations of eigenvalues and second order {P}oincar{\'e}
  inequalities.
\newblock {\em Probability Theory and Related Fields}, 143(1-2):1--40, 2009.

\bibitem{chatterjee2005exchangeable}
Sourav Chatterjee, Persi Diaconis, and Elizabeth Meckes.
\newblock Exchangeable pairs and poisson approximation.
\newblock {\em Probability Surveys}, 2:64--106, 2005.

\bibitem{chen2015clt}
Binbin Chen and Guangming Pan.
\newblock {CLT} for linear spectral statistics of normalized sample covariance
  matrices with the dimension much larger than the sample size.
\newblock {\em Bernoulli}, 21(2):1089--1133, 2015.

\bibitem{cook2018size}
Nicholas Cook, Larry Goldstein, and Tobias Johnson.
\newblock Size biased couplings and the spectral gap for random regular graphs.
\newblock {\em The Annals of Probability}, 46(1):72--125, 2018.

\bibitem{dehghan2018tanner}
Ali Dehghan and Amir~H Banihashemi.
\newblock On the tanner graph cycle distribution of random {LDPC}, random
  protograph-based {LDPC}, and random quasi-cyclic {LDPC} code ensembles.
\newblock {\em IEEE Transactions on Information Theory}, 64(6):4438--4451,
  2018.

\bibitem{dehghan2019computing}
Ali Dehghan and Amir~H Banihashemi.
\newblock On computing the multiplicity of cycles in bipartite graphs using the
  degree distribution and the spectrum of the graph.
\newblock {\em IEEE Transactions on Information Theory}, 65(6):3778--3789,
  2019.

\bibitem{dembo2021empirical}
Amir Dembo, Eyal Lubetzky, and Yumeng Zhang.
\newblock Empirical spectral distributions of sparse random graphs.
\newblock In {\em In and Out of Equilibrium 3: Celebrating Vladas
  Sidoravicius}, pages 319--345. Springer, 2021.

\bibitem{dumitriu2016marvcenko}
Ioana Dumitriu and Tobias Johnson.
\newblock The {M}ar{\v{c}}enko-{P}astur law for sparse random bipartite
  biregular graphs.
\newblock {\em Random Structures \& Algorithms}, 48(2):313--340, 2016.

\bibitem{dumitriu2013functional}
Ioana Dumitriu, Tobias Johnson, Soumik Pal, and Elliot Paquette.
\newblock Functional limit theorems for random regular graphs.
\newblock {\em Probability Theory and Related Fields}, 156(3-4):921--975, 2013.

\bibitem{dumitriu2012sparse}
Ioana Dumitriu and Soumik Pal.
\newblock Sparse regular random graphs: spectral density and eigenvectors.
\newblock {\em The Annals of Probability}, 40(5):2197--2235, 2012.

\bibitem{dumitriu2019spectra}
Ioana Dumitriu and Yizhe Zhu.
\newblock Spectra of random regular hypergraphs.
\newblock {\em The Electronic Journal of Combinatorics}, pages P3--36, 2021.

\bibitem{enriquez2016spectra}
Nathana{\"e}l Enriquez and Laurent M{\'e}nard.
\newblock Spectra of large diluted but bushy random graphs.
\newblock {\em Random Structures \& Algorithms}, 49(1):160--184, 2016.

\bibitem{feng1996spectra}
Keqin Feng and Wen-Ching~Winnie Li.
\newblock Spectra of hypergraphs and applications.
\newblock {\em Journal of number theory}, 60(1):1--22, 1996.

\bibitem{friedman2008proof}
Joel Friedman.
\newblock {\em A Proof of {A}lon's Second Eigenvalue Conjecture and Related
  Problems}.
\newblock Memoirs of the American Mathematical Society. American Mathematical
  Society, 2008.

\bibitem{gamarnik2017matrix}
David Gamarnik, Quan Li, and Hongyi Zhang.
\newblock Matrix completion from ${O}(n)$ samples in linear time.
\newblock In {\em Conference on Learning Theory}, pages 940--947, 2017.

\bibitem{godsil1988walk}
Chris~D Godsil and Bojan Mohar.
\newblock Walk generating functions and spectral measures of infinite graphs.
\newblock {\em Linear Algebra and its Applications}, 107:191--206, 1988.

\bibitem{hoory2006expander}
Shlomo Hoory, Nathan Linial, and Avi Wigderson.
\newblock Expander graphs and their applications.
\newblock {\em Bulletin of the American Mathematical Society}, 43(4):439--561,
  2006.

\bibitem{johansson1998fluctuations}
Kurt Johansson.
\newblock On fluctuations of eigenvalues of random {H}ermitian matrices.
\newblock {\em Duke mathematical journal}, 91(1):151--204, 1998.

\bibitem{johnson2015exchangeable}
Tobias Johnson.
\newblock Exchangeable pairs, switchings, and random regular graphs.
\newblock {\em The Electronic Journal of Combinatorics}, 22(1):P1--33, 2015.

\bibitem{jonsson1982some}
Dag Jonsson.
\newblock Some limit theorems for the eigenvalues of a sample covariance
  matrix.
\newblock {\em Journal of Multivariate Analysis}, 12(1):1--38, 1982.

\bibitem{jung2018delocalization}
Paul Jung and Jaehun Lee.
\newblock Delocalization and limiting spectral distribution of
  {E}rd{\H{o}}s-{R}{\'e}nyi graphs with constant expected degree.
\newblock {\em Electronic Communications in Probability}, 23, 2018.

\bibitem{khorunzhy1996asymptotic}
Alexei~M Khorunzhy, Boris~A Khoruzhenko, and Leonid~A Pastur.
\newblock Asymptotic properties of large random matrices with independent
  entries.
\newblock {\em Journal of Mathematical Physics}, 37(10):5033--5060, 1996.

\bibitem{kusalik2007orthogonal}
Timothy Kusalik, James~A Mingo, and Roland Speicher.
\newblock Orthogonal polynomials and fluctuations of random matrices.
\newblock {\em Journal f{\"u}r die reine und angewandte Mathematik},
  2007(604):1--46, 2007.

\bibitem{sole1996spectra}
Wen-Ching~Winnie Li and Patrick Sol{\'e}.
\newblock Spectra of regular graphs and hypergraphs and orthogonal polynomials.
\newblock {\em European Journal of Combinatorics}, 17(5):461--477, 1996.

\bibitem{lu2012loose}
Linyuan Lu and Xing Peng.
\newblock Loose {L}aplacian spectra of random hypergraphs.
\newblock {\em Random Structures \& Algorithms}, 41(4):521--545, 2012.

\bibitem{mckay1981expected}
Brendan~D McKay.
\newblock The expected eigenvalue distribution of a large regular graph.
\newblock {\em Linear Algebra and its Applications}, 40:203--216, 1981.

\bibitem{mckay1981subgraphs}
Brendan~D McKay.
\newblock Subgraphs of random graphs with specified degrees.
\newblock In {\em Congressus Numerantium}, volume~33, pages 213--223, 1981.

\bibitem{mckay2004short}
Brendan~D McKay, Nicholas~C Wormald, and Beata Wysocka.
\newblock Short cycles in random regular graphs.
\newblock {\em the electronic journal of combinatorics}, pages R66--R66, 2004.

\bibitem{metz2014finite}
Fernando~L Metz, Giorgio Parisi, and Luca Leuzzi.
\newblock Finite-size corrections to the spectrum of regular random graphs: An
  analytical solution.
\newblock {\em Physical Review E}, 90(5):052109, 2014.

\bibitem{metz2020spectral}
Fernando~L Metz and Jeferson~D Silva.
\newblock Spectral density of dense random networks and the breakdown of the
  {W}igner semicircle law.
\newblock {\em Physical Review Research}, 2(4):043116, 2020.

\bibitem{mizuno2003semicircle}
Hirobumi Mizuno and Iwao Sato.
\newblock The semicircle law for semiregular bipartite graphs.
\newblock {\em Journal of Combinatorial Theory, Series A}, 101(2):174--190,
  2003.

\bibitem{noiry2018spectral}
Nathan Noiry.
\newblock Spectral asymptotic expansion of {W}ishart matrices with exploding
  moments.
\newblock {\em ALEA}, 15(2):897--911, 2018.

\bibitem{perarnau2013matchings}
Guillem Perarnau and Giorgis Petridis.
\newblock Matchings in random biregular bipartite graphs.
\newblock {\em The Electronic Journal of Combinatorics}, pages P60--P60, 2013.

\bibitem{shcherbina2010central}
Mariya Shcherbina and Brunello Tirozzi.
\newblock Central limit theorem for fluctuations of linear eigenvalue
  statistics of large random graphs.
\newblock {\em Journal of Mathematical Physics}, 51(2):023523, 2010.

\bibitem{shcherbina2012central}
Mariya Shcherbina and Brunello Tirozzi.
\newblock Central limit theorem for fluctuations of linear eigenvalue
  statistics of large random graphs: Diluted regime.
\newblock {\em Journal of Mathematical Physics}, 53(4):043501, 2012.

\bibitem{shorack2017probability}
Galen~R Shorack.
\newblock {\em Probability for Statisticians}.
\newblock Springer Texts in Statistics. Springer International Publishing,
  2017.

\bibitem{sinai1998central}
Ya~Sinai and Alexander Soshnikov.
\newblock Central limit theorem for traces of large random symmetric matrices
  with independent matrix elements.
\newblock {\em Boletim da Sociedade Brasileira de
  Matem{\'a}tica-Bulletin/Brazilian Mathematical Society}, 29(1):1--24, 1998.

\bibitem{sipser1996expander}
Michael Sipser and Daniel~A Spielman.
\newblock Expander codes.
\newblock {\em IEEE transactions on Information Theory}, 42(6):1710--1722,
  1996.

\bibitem{sosoe2013regularity}
Philippe Sosoe and Percy Wong.
\newblock Regularity conditions in the {CLT} for linear eigenvalue statistics
  of {W}igner matrices.
\newblock {\em Advances in Mathematics}, 249:37--87, 2013.

\bibitem{tanner1981recursive}
R~Tanner.
\newblock A recursive approach to low complexity codes.
\newblock {\em IEEE Transactions on information theory}, 27(5):533--547, 1981.

\bibitem{tikhomirov2019spectral}
Konstantin Tikhomirov and Pierre Youssef.
\newblock The spectral gap of dense random regular graphs.
\newblock {\em The Annals of Probability}, 47(1):362--419, 2019.

\bibitem{tran2020local}
Linh~V Tran.
\newblock Local law for eigenvalues of random regular bipartite graphs.
\newblock {\em Bulletin of the Malaysian Mathematical Sciences Society},
  43(2):1517--1526, 2020.

\bibitem{trefethen2013approximation}
Lloyd~N Trefethen.
\newblock {\em Approximation Theory and Approximation Practice}, volume 128.
\newblock SIAM, 2013.

\bibitem{yang2017local}
Kevin Yang.
\newblock Local {M}archenko-{P}astur law for random bipartite graphs.
\newblock {\em arXiv preprint arXiv:1704.08672}, 2017.

\bibitem{zhu2020second}
Yizhe Zhu.
\newblock On the second eigenvalue of random bipartite biregular graphs.
\newblock {\em Journal of Theoretical Probability}, pages 1--35, 2022.

\end{thebibliography}
\end{document}